\documentclass[11pt]{amsart}
\usepackage{amsmath}
\usepackage{amssymb}
\usepackage{amsfonts}
\usepackage{amsthm}
\usepackage{enumerate}
\usepackage[mathscr]{eucal}

\newtheorem{theorem}{Theorem}[section]
\newtheorem{lemma}[theorem]{Lemma}
\newtheorem{corollary}[theorem]{Corollary}

\theoremstyle{definition}


\textwidth=16cm \textheight=23cm \topmargin=0.5cm \oddsidemargin=-0cm
\evensidemargin=-0cm \linespread{1.3}
\parindent=20pt
\voffset -0.2 true in
\hoffset 0.1 true in
\voffset -0.4 true in
\hoffset 0.1 true in

\theoremstyle{remark}
\newtheorem{remark}[theorem]{Remark}

\numberwithin{equation}{section}



\def\R{{\mathbb R}}
\def\Z{{\mathbb Z}}

\def\intslash{\rlap{\kern  .32em $\mspace {.5mu}\backslash$ }\int}
\def\qsl{{\rlap{\kern  .32em $\mspace {.5mu}\backslash$ }\int_{Q_x}}}

\def\S{\mathbb S}
\def\C{\mathcal C}

\def\N{\mathbb N}

\def\emph#1{{\it #1 }}

\def\pari{\partial}
\def\Ga{\Gamma}
\def\ga{\gamma}

\def\supp{{\text{\rm supp}}}

\def\inn#1#2{\langle#1,#2\rangle}

\def\card{\text{\rm card}}
\def\lc{\lesssim}

\def\pv{\text{\rm p.v.}}
\def\alp{\alpha}

\def\del{\delta}             
\def\eps{\varepsilon}

\def\tet{\theta}

\def\lam{\lambda}            \def\Lam{\Lambda}

\def\om{\omega}              \def\Om{\Omega}
\def\fr{\frac}
\newcommand{\Be}{\begin{equation}}
\newcommand{\Ee}{\end{equation}}
\newcommand{\Bes}{\begin{equation*}}
\newcommand{\Ees}{\end{equation*}}
\newcommand{\Bsp}{\begin{split}}
\newcommand{\Esp}{\end{split}}
\newcommand{\Bm}{\begin{multline}}
\newcommand{\Em}{\end{multline}}
\newcommand{\Bea}{\begin{eqnarray}}
\newcommand{\Eea}{\end{eqnarray}}
\newcommand{\Beas}{\begin{eqnarray*}}
\newcommand{\Eeas}{\end{eqnarray*}}
\newcommand{\Benu}{\begin{enumerate}}
\newcommand{\Eenu}{\end{enumerate}}
\newcommand{\Bi}{\begin{itemize}}
\newcommand{\Ei}{\end{itemize}}

\begin{document}

\title[Weak type (1,1) bound criterion]{Weak type (1,1) bound criterion for singular integral with rough kernel and its applications}

\author{Yong Ding}
\author{Xudong Lai}
\address{\textbf{Yong Ding}:
School of Mathematical Sciences,
         Beijing Normal University,
Laboratory of Mathematics and Complex Systems (BNU), Ministry of Education,
Beijing, 100875,
       People's Republic of China}
\email{dingy@bnu.edu.cn}

\thanks {The work is supported by NSFC (No.11371057, No.11471033, No.11571160), SRFDP (No.20130003110003) and the Fundamental Research Funds for the Central Universities (No.2014KJJCA10).}


\address{\textbf{Xudong Lai}(Corresponding Author):
Institute for Advanced Study in Mathematics, Harbin Institute of Technology, Harbin, 150001, People's Republic of China
\&
School of Mathematical Sciences,
         Beijing Normal University,
         Laboratory of Mathematics and Complex Systems (BNU), Ministry of Education,
Beijing, 100875,
       People's Republic of China}
\email{xudonglai@mail.bnu.edu.cn}
\thanks{Xudong Lai is the corresponding author.}

\subjclass[2010]{42B15, 42B20}

\date{June 18, 2017}


\keywords{weak type (1,1), criterion, singular integral operator, rough kernel}

\begin{abstract}
In this paper, a weak type (1,1) bound criterion is established for singular integral operator with rough kernel. As some applications of this criterion, we show some important operators with rough kernel in harmonic analysis, such as Calder\'on commutator, higher order Calder\'on commutator, general Calder\'on commutator, Calder\'on commutator of Bajsanski-Coifman type and general singular integral of Muckenhoupt type, are all of weak type (1,1).
\end{abstract}

\maketitle


\section{Introduction}

Singular integral theory is a fundamental and important topic in harmonic analysis. It is intimately connected with the study of complex analysis and partial differential equations. Real variable methods of singular integral for higher dimension were original by A. P. Calder\'on and A. Zygmund \cite{CZ52} in the 1950's. Later, large numbers of works are developed in this area. Despite the intensive research over the last six decades, there are still many problems in the theory of singular integral which remain open and deserve to be explored further. For example, there is no general $L^1$ theory of rough singular integral, singular integral along curves and Radon transforms (see \cite{Ste93}).

It is well known that the $L^1$ boundedness is not true for many integral operators in harmonic analysis, such as Hilbert transform, Riesz transforms, Hardy-Littlewood maximal operator, and so on. As a substitution, we consider the weak type (1,1) bound and use interpolation and dual argument, we can get all $L^p$ bound for $1<p<+\infty$. So it is an \emph{important problem} to establish weak type (1,1) boundedness in the $L^1$ theory of singular integral operator and maximal operator. Usually, the weak type (1,1) bound can be established by using the classical Calder\'on-Zygmund decomposition if its kernel has enough smoothness. However, if the kernel is rough, then the standard Calder\'on-Zygmund theory cannot be applied directly. In fact it is a quite difficult problem to prove the weak type (1,1) boundedness of the integral operator with rough kernel. We refer to see the nice works by M. Christ \cite{Chr88}, M. Christ and J. Rubio de Francia \cite{CR88}, M. Christ and C. Sogge \cite{CS88}, S. Hofmann \cite{Hof89}, A. Seeger \cite{See96} \cite{See14}, P. Sj\"ogren and F. Soria \cite{SS97} and Tao \cite{Tao99} about this topic.

However, the papers mentioned above are considered for some special operators. In this paper, we are going to study the general $L^1$ theory of rough singular integral operator. More precisely, we try to give a criterion that could deal with weak type (1,1) boundedness of a class of singular integrals with non-smooth kernel.

Before state our main result, let us firstly give our motivations from some basic examples. The first example is \emph{singular integral with convolution homogeneous kernel}. Suppose $\Om$ is a function defined on $\R^d\setminus\{0\}$ satisfying
\begin{equation}\label{e:2Home}
\Om(rx')=\Om(x'),\text{ for any $r>0$ and $x'\in\S^{d-1}$},
\end{equation}
\begin{equation}\label{cance condi}
\int_{\S^{d-1}}\Om(\tet)d\tet=0
\end{equation}
and
\begin{equation}\label{int condi}
\Om\in L^1(\S^{d-1}),
\end{equation}
where and in the sequel, $d\tet$ denotes the surface measure of $\S^{d-1}$. Then it is easy to see that the following singular integral is well defined for $f\in C_c^\infty(\R^d)$,
\Be\label{e:9Tsin}
Tf(x)=\pv\int_{\R^d}\fr{\Om(x-y)}{|x-y|^d}f(y)dy.
\Ee
In 1956, Calder\'on and Zygmund \cite{CZ56} gave the $L^p$ boundedness of $T$.
\vskip0.2cm
\noindent
\textbf{Theorem A}\ {\rm (\cite{CZ56}).}\ {\it Suppose that $\Omega$ satisfies the conditions \eqref{e:2Home} and \eqref{int condi}, then the singular integral $T$ defined in \eqref{e:9Tsin} extends to a bounded operator on $L^p(\R^d)\,(d\ge2)$ for $1<p<\infty$ if $\Om$ satisfies
one of the following conditions:

{\rm (i)}\ $\Om$ is odd;

{\rm (ii)}\ $\Om$ is even and $\Om\in L\log^+L(\S^{d-1})$ satisfies \eqref{cance condi}.}
\vskip0.2cm

For the case $p=1$, it is a very difficult problem to show that $T$ is of weak type (1,1). In 1988, M. Christ and Rubio de Francia \cite{CR88}
and in 1989, S. Hofmann \cite{Hof89} independently gave weak type (1,1) boundedness  of $T$ for $d=2$. Later, in 1996, A. Seeger \cite{See96} established the weak type (1,1) boundedness  of $T$ for all dimension $d\ge2$. Now let us sum up their nice results as follows.
\vskip0.2cm
\noindent
\textbf{Theorem B.}\ {\it Suppose that $\Omega$ satisfies the conditions \eqref{e:2Home}, \eqref{cance condi} and \eqref{int condi}.

{\rm (i)\ \ (see \cite{CR88}).}\ If $\Omega\in L\log^+\!\!L(\S^1)$, $T$ is of weak type $(1,1)$ for $d=2$. In an unpublished paper, M. Christ and Rubio de Francia pointed out that they succeeded proving similar results hold also for $d\le5$;

{\rm (ii)\ \ (see \cite{Hof89}).}\ If $\Omega\in L^q(\S^1)(1<q\leq\infty)$, $T$ is of weak type $(1,1)$ for $d=2$;

{\rm (iii)\ \ (see \cite{See96}).}\ If $\Omega\in L\log^+\!\!L(\S^{d-1})$, $T$ is of weak type $(1,1)$ for $d\ge2$.}

\vskip0.2cm

The second example is \emph{Calder\'on commutator} introduced by A. P. Calder\'on in his famous paper \cite{Cal65}, which is defined by
\Be\label{TA}
T_{\Om,A} f(x)=\pv\int_{\R^d}\fr{\Om(x-y)}{|x-y|^d}\cdot\fr{A(x)-A(y)}{|x-y|}\cdot f(y)dy,
\Ee
where $A\in Lip(\R^d)$, the class of Lipschitz functions.
\vskip0.2cm
\noindent
\textbf{Theorem C}\ {\rm (\cite{Cal65} or see \cite{CCal75}).}\ {\it Let $d\ge2$. Suppose that $\Omega$ satisfies the conditions \eqref{e:2Home} and \eqref{int condi}, then the commutator $T_{\Om,A}$ maps $L^p(\R^d)$ to itself for $1<p<\infty$ if $\Om$ satisfies
one of the following conditions:

{\rm (i)}\ $\Om$ is even;

{\rm (ii)}\ $\Om\in L\log^+L(\S^{d-1})$ is odd and satisfies
\begin{equation}\label{e:km}
\int_{\S^{d-1}}\Om(\tet)\tet^\alpha d\tet=0,\quad \text{for all}\ \ \alpha\in \Z_+^d\ \ \text{with}\ \ |\alp|=1.
\end{equation}
Here and in the sequel, $\alpha=(\alpha_1,\cdots,\alpha_d)\in \Z_+^d$ is a multi-indices , $|\alpha|=\sum_{j=1}^d\alpha_j$ and $x^\alpha=\prod_{i=1}^dx_i^{\alpha_i}$ when $x\in\R^d$.}
\vskip0.2cm

For a long time, \emph{an open problem} is that whether
{Calder\'on commutator} $T_{\Om,A}$  is of weak type (1,1)  if $\Om$ satisfies \eqref{e:2Home}, \eqref{e:km} and $\Om\in L\log^+L(\S^{d-1})$. In Section \ref{s:92l}, we will give a confirm answer to this problem as an application of our main result.

By careful observation of {singular integral with homogeneous kernel} in \eqref{e:9Tsin} and {Calder\'on commutator} in \eqref{TA}, we conclude that singular integrals in \eqref{e:9Tsin} and \eqref{TA} can be formally rewritten in the following way,
\Be\label{e:8T}
T_\Om f(x)=\pv\int_{\R^d}\Om(x-y)K(x,y)f(y)dy
\Ee
where $\Om$ satisfies \eqref{e:2Home}, \eqref{int condi} and $K$ satisfies
\Be\label{e:8kb}
|K(x,y)|\leq \fr{C}{|x-y|^d},
\Ee
and the regularity conditions: for a fixed $\del\in(0,1]$,
\Be\label{e:8kr}
\begin{split}
&|K(x_1,y)-K(x_2,y)|\leq C\fr{|x_1-x_2|^\del}{|x_1-y|^{d+\del}},\ \ \ \ |x_1-y|>2|x_1-x_2|,\\
&|K(x,y_1)-K(x,y_2)|\leq C\fr{|y_1-y_2|^\del}{|x-y_1|^{d+\del}},\ \ \ \ |x-y_1|>2|y_1-y_2|.
\end{split}
\Ee

In this paper, we are interested in when $T_\Om$ is of weak type (1,1). Our main result is the following.
\begin{theorem}\label{t:9main}
Suppose $K$ satisfies \eqref{e:8kb} and \eqref{e:8kr}. Let $\Om$ satisfy \eqref{e:2Home} and $\Om\in L\log^+L(\S^{d-1})$. In addition, suppose $\Om$ and $K$ satisfy some appropriate cancellation conditions such that $T_\Om f(x)$ in \eqref{e:8T} is well defined for $f\in C_c^\infty(\R^d)$ and extends to a bounded operator on $L^2(\R^{d})$ with bound $C\|\Om\|_{L\log^+L}$. Then for any $\lam>0$, we have
$$\lam m(\{x\in\R^{d}:|T_\Om f(x)|>\lam\})\lc \C_\Om\|f\|_1,$$
where $\C_\Om$ is a finite constant which depends on $\Om$ (see the definition in \eqref{e:7constantom}).
\end{theorem}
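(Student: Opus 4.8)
The plan is to follow the classical Calderón–Zygmund paradigm but with the refinements needed to handle the rough factor $\Om$, in the spirit of Christ–Rubio de Francia, Hofmann and especially Seeger. Fix $\lam>0$ and perform a Calderón–Zygmund decomposition of $f$ at height $\lam$: write $f=g+b$ with $\|g\|_\infty\lc\lam$, $\|g\|_1\lc\|f\|_1$, and $b=\sum_Q b_Q$ where each $b_Q$ is supported in a dyadic cube $Q$, has mean zero, $\|b_Q\|_1\lc\lam m(Q)$, and $\sum_Q m(Q)\lc\lam^{-1}\|f\|_1$. The contribution of $g$ is disposed of by the $L^2$ bound hypothesis (Tchebychev plus $\|g\|_2^2\lc\lam\|f\|_1$), which produces the factor $\|\Om\|_{L\log^+L}\lesssim\C_\Om$. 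So everything reduces to estimating $b$ on the complement of the union $\Om^*$ of the dilated cubes $2\sqrt d\,Q$; since $m(\Om^*)\lc\lam^{-1}\|f\|_1$, it suffices to bound $m(\{x\notin\Om^*:|T_\Om b(x)|>\lam\})$.

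The standard cancellation trick — replacing $K(x,y)$ by $K(x,y)-K(x,c_Q)$ and using the regularity \eqref{e:8kr} — handles the smooth kernel $K$, but does \emph{nothing} about the roughness of $\Om(x-y)$, so the naive bound fails. Here is where the main work lies. Following Seeger, I would decompose $\Om$ dyadically according to its size, $\Om=\sum_{k\ge 0}\Om_k$ where $\Om_k$ is (essentially) $\Om$ restricted to $\{2^{k}\le|\Om|<2^{k+1}\}$ normalized appropriately, so that $\sum_k (k+1)\,\|\Om_k\text{-measure}\|$ reproduces $\|\Om\|_{L\log^+L}$. Correspondingly decompose the convolution-type piece of $T_\Om$ into Littlewood–Paley frequency annuli, $T_\Om=\sum_j T_\Om^j$, where $T_\Om^j$ localizes $x-y$ to scale $2^j$. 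The key point is a pair of quantitative estimates: (a) an $L^2$ (or $L^2\to L^2$) bound for the $j$-th piece with exponential-type gain $2^{-\eps|j-k|}$ away from the "resonant" scale, obtained by interpolating the assumed $L^2$ boundedness against a trivial estimate; and (b) an $L^1$-type "off-diagonal" or "single-annulus" estimate for each $T_\Om^j$ acting on the $b_Q$'s with $\ell(Q)\sim 2^j$, exploiting the mean-zero property of $b_Q$ together with a smoothing of $\Om_k$ at scale $2^{j}$. Summing the good pieces in $j$ and $k$ — using Cauchy–Schwarz in the overlap region and the $L\log^+L$ normalization to absorb the logarithmic loss from the number of scales $k$ — yields the weak-type bound with constant $\C_\Om$.

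Concretely, the argument splits into the familiar "near" and "far" terms. For cubes $Q$ with $\ell(Q)\ge 2^{j}$ (the kernel annulus is small compared to $Q$), one uses the regularity of $K$ and the size bound on $\Om$ to get an $L^1$ estimate that is summable in $j\le\lo\ell(Q)$; this is routine. The genuinely hard far term, where $2^j\gg\ell(Q)$, is treated by exploiting cancellation in $b_Q$ against a mollified kernel: one writes $T_\Om^j b_Q(x)=\int b_Q(y)[\Om_k K](x,y)-[\widetilde{\Om_k K}](x,\cdot)\,dy$ with a suitable regularization at scale $\ell(Q)$, gaining a factor $(\ell(Q)/2^j)^{\eps}$ in $L^2$, and then one sums $\sum_Q\sum_{j>\lo\ell(Q)}\sum_k$ by Cauchy–Schwarz in $j$, controlling the $L^2$ norms of the mollified pieces against $\lam\sum_Q m(Q)$ and the resonant pieces ($j\sim k$) via the hypothesized $\|\Om\|_{L\log^+L}$-bounded $L^2$ operator norm. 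The main obstacle, and the heart of the matter, is establishing the single-annulus $L^2$ bound with \emph{quantitative} dependence $2^{-\eps|j-k|}\,\|\Om_k\|$ — this is the rough-kernel analogue of the usual smooth Littlewood–Paley decoupling and requires an explicit oscillatory/Fourier computation on each $\Om_k$; once it is in hand, the summation over the logarithmically many pieces of $\Om$ is what forces the $L\log^+L$ hypothesis and produces exactly the constant $\C_\Om$ of \eqref{e:7constantom}.
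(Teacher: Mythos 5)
There is a genuine gap at the heart of the proposal. Your plan hinges on a ``single-annulus'' $L^2$ estimate with exponential decay $2^{-\eps|j-k|}$, which you claim follows by ``interpolating the assumed $L^2$ boundedness against a trivial estimate.'' This does not work: the hypothesized $L^2$ bound is a property of the \emph{full} operator $T_\Om$, not of the individual pieces $T_j$ acting on the mean-zero atoms, so there is nothing to interpolate. In fact, the paper's proof uses the assumed $L^2$ boundedness \emph{only} to dispose of the good part $g$; it is explicitly noted that the cancellation hypotheses (which give that $L^2$ bound) play no role in the estimate for $b$. The mechanism that produces decay in the scale separation $n=j-j'$ (kernel scale $2^j$ vs.\ cube scale $2^{j'}$) for a merely $L\log^+L$ kernel is precisely what is missing from your proposal. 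It requires the microlocal (directional) decomposition: cover $\S^{d-1}$ by caps of radius $2^{-n\ga}$ with centers $e_v^n$, cut $T_j$ correspondingly into $T_j^{n,v}$, and split each $T_j^{n,v}$ by a frequency multiplier $G_{n,v}$ localizing near the hyperplane $\{\xi:\inn{e_v^n}{\xi}\approx 0\}$. The $G_{n,v}$ pieces are handled by Plancherel, almost-orthogonality of the caps, and a $TT^*$ computation exploiting that the supports $E_j^{n,v}$ are thin tubes; the $(I-G_{n,v})$ pieces are handled by inserting a Mihlin multiplier (which is weak $(1,1)$ with explicit bound) and then estimating oscillatory integrals directly by integration by parts, gaining powers of $2^{-(j-k)}$ from the fact that the phase $\inn{\tet}{\xi}$ is bounded below on the support. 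None of this is captured by a Littlewood-Paley decomposition alone.

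A secondary but substantive difference: your dyadic decomposition $\Om=\sum_k\Om_k$ by size, with summation over $k$ and ``resonance'' in $|j-k|$, does not match the paper's strategy and is not a sound way to set up the problem (the spatial scale $j$ and the size-threshold $k$ are not directly comparable quantities, so $|j-k|$ has no natural meaning). What the paper actually does is, for each scale separation $n$, split $\Om$ into the part exceeding $2^{\iota n}\|\Om\|_1$ — which is summed away trivially using the $L\log^+L$ hypothesis and gives the constant $\C_\Om$ — and the bounded remainder, which is fed into the microlocal machinery with $\|\Om\|_\infty\le 2^{\iota n}\|\Om\|_1$. The paper also runs the Calder\'on--Zygmund decomposition at height $\lam/\C_\Om$ rather than $\lam$, which is how the correct constant propagates. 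Your ``near/far'' dichotomy and the use of the cancellation of $b_Q$ against a mollified kernel are the right instincts, and the mollification step is indeed in the paper (Lemma~\ref{l:app}), but without the directional decomposition and the $TT^*$ argument the $L^2$ estimate you need cannot be obtained.
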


It should be pointed out that it is difficult to assume uniform cancellation conditions of $\Om$  in our main result, since it is dependent of $K(x,y)$, such as the conditions \eqref{cance condi} and \eqref{e:km}.
Essentially, in the theory of singular integral, the cancellation conditions of $\Om$ play a key role in proving the $L^2$ boundedness of a singular integral with homogeneous kernel. However, in the present paper, the cancellation conditions actually do not need to be used in our proof of weak type (1,1) boundedness of the singular integral once it is of strong type (2,2).

Note that the conditions in Theorem \ref{t:9main} are easily verified, therefore  Theorem \ref{t:9main} gives a weak type (1,1) bound criterion, which has its own interest in the theory of singular integral. In fact, one will see that applying Theorem \ref{t:9main}, some important and interesting integral operators in harmonic analysis, such as the famous Calder\'on commutator, higher order Calder\'on commutator, general Calder\'on commutator, Calder\'on commutator of Bajsanski-Coifman type and general singular integral of Muckenhoupt type are all of weak type (1,1), see Section \ref{s:92l} for more details.

Since the kernel $\Om(x-y)K(x,y)$ of $T_\Om$ is non-smooth for $\Om\in L\log^+L(\S^{d-1})$, the standard Calde\'on-Zygmund theory can not be applied to proving the weak (1,1) boundedness of $T_\Om$. When the dimension $d=2$, M. Christ and Rubio de Francia \cite{CR88}
or S. Hofmann \cite{Hof89}, used the $TT^*$ method to get the weak type (1,1) bound for rough singular integral operator defined in \eqref{e:9Tsin}. The $TT^*$ method was original by C. Fefferman \cite{Fef70} (see \cite{GH12}, \cite{DL15}, \cite{See96}, \cite{See14} and \cite{DL2} for more applications in singular integrals).
However, for the higher dimensions this method may not be useful. In this paper, our strategy to prove Theorem \ref{t:9main} is based on partly the nice ideas in \cite{See96}. More precisely, we use the microlocal decomposition of the kernel and some $TT^*$ argument in $L^2$ estimate in one part (see the proof of Lemma \ref{l:L^2} in Section \ref{s:933}), which is similar to \cite{See96}. For the other part, we inset a
 multiplier operator of weak type (1,1) with a controllable bound so that the problem can be reduced to $L^1$ estimates of some oscillatory integrals (see the proof of Lemma \ref{l:L^1} in Section \ref{s:94}).
Since $T_\Om$ is a non-convolution operator, the proof in this part is more complicated and we can not apply the properties of multiplier to oscillatory integrals. Thus we have to estimate the kernel of oscillatory integrals directly by using the method of stationary phase.

This paper is organized as follows.
In Section \ref{s:92}, we complete the proof of Theorem \ref{t:9main} based on some lemmas, their proofs will be given in Section \ref{s:93} and Section \ref{s:94}. In Section \ref{s:92l}, we give some important applications of Theorem \ref{t:9main}. Some open problems are listed in Section \ref{s:76}. Throughout this paper, the letter $C$ stands for a positive constant which is independent of the essential variables and not necessarily the same one in each occurrence. Sometimes we use $C_N$ to emphasize the constant depends on $N$. $A\lc B$ means $A\leq CB$ for some constant $C$. $A\approx B$ means that $A\lc B$ and $B\lc A$. For a set $E\subset\R^d$, we denote by  $|E|$ or $m(E)$ the Lebesgue measure of $E$. We denote by $\mathcal{F}f$ or $\hat{f}$ the Fourier transform
of $f$ which is defined by
$$\mathcal{F}f(\xi)=\int_{\R^d} e^{-i\inn{x}{\xi}}f(x)dx.$$
 $\Z_+$ denote the set of all nonnegative integers and $\Z_+^d=\Z_+\times \cdots\times \Z_+.$
Moreover, $\|\Om\|_q:=\big(\int_{\S^{d-1}}|\Om(\tet)|^qd\tet\big)^{\fr{1}{q}}$ and  $\|\Om\|_{L\log^+\!\! L}:=\int_{\S^{d-1}}|\Om(\tet)|\log(2+|\Om(\tet)|)d\tet$.
\vskip1cm

\section {Proof of Theorem \ref{t:9main}}\label{s:92}

In this section we give the proof of Theorem \ref{t:9main} based on some lemmas, their proofs will be given in Section \ref{s:93} and Section \ref{s:94}.

We only focus on dimension $d\geq2$. Let $\Om\in L\log^+L(\S^{d-1})$ with $\|\Om\|_{L\log^+L}<+\infty$. Set the constant
\Be\label{e:7constantom}
\mathcal{C}_\Om=\|\Om\|_{L\log^+L}+\int_{\S^{d-1}}|\Om(\tet)|\big(1+\log^+({|\Om(\tet)|}/{\|\Om\|_{1}})\big)d\tet,
\Ee
where $\log^+a=0$ if $0<a<1$ and $\log^+a=\log a$ if $a\geq1$. Since $\|\Om\|_{L\log^+L}<+\infty$, one can easily check that $\mathcal{C}_\Om$ is a finite constant.
For  $f\in L^1(\R^d)$ and $\lam>0$, using the Calder\'on-Zygmund decomposition at level $\fr{\lam}{\C_\Om}$,
we have the following conclusions (cf. see \cite{Ste93} for example):
\begin{enumerate}[\quad (cz-i)]
\rm\item $f=g+b$;
\item  $\|g\|^2_{2}\lc\lambda\|f\|_{1}/{\C_\Om}$;
\item  $b=\sum_{Q\in \mathcal{Q}} b_Q$, $\supp b_Q\subset Q$, where $\mathcal{Q}$ is a countable set of disjoint dyadic cubes;
\item Let $E=\bigcup_{Q\in \mathcal{Q}} Q$, then $m(E)\lc {\lambda}^{-1}\C_\Om\|f\|_{1}$;
\item $\int b_Q=0$ for each $Q\in \mathcal{Q}$ and $\|b_Q\|_{1}\lc \fr{\lambda}{\C_\Om} |Q|$, so $\|b\|_{1}\lc\|f\|_{1}$ by  (cz-iii) and (cz-iv);
\end{enumerate}

 By the property (cz-i), we have
\[m(\{x: |T_{\Om}f(x)|>\lambda\})\leq m\big(\{x:|T_{\Om}g(x)|>\lambda/2\}\big)+m\big(\{x:|T_{\Om}b(x)|>\lambda/2\}\big).\]
Hence, by Chebyshev's inequality, the fact $T_\Om$ is bounded on $L^2(\R^{d})$ with bound $C\|\Om\|_{L\log^+L}$ and property (cz-ii), we get
\[m(\{x\in\R^d:|T_{\Om}g(x)|>\lambda/2\})\leq 4\|T_{\Om}g\|_{2}^2/{\lambda ^2}\lc{\lambda^{-2}}(\|\Om\|_{L\log^+L}\|g\|_{2})^2\lc{\lambda}^{-1}\C_\Om\|f\|_{1}.\]
For $Q\in \mathcal{Q}$, denote by $l(Q)$ the side length of cube $Q$. For $t>0$, let $tQ$ be the cube with the same center of $Q$ and $l(tQ)=tl(Q)$.
Set $E^*=\bigcup_{Q\in \mathcal{Q}}2^{200}Q$. Then
\[m(\{x\in\R^d:|T_{\Om}b(x)|>\lambda/2\})\leq m(E^*)+m(\{x\in (E^*)^c:|T_{\Om}b(x)|>\lambda/2\}).\]
By the property (cz-iv), the set $E^*$ satisfies
\begin{equation*}
m(E^*)\lc m(E)\lc{\lambda}^{-1}\C_\Om\|f\|_{1}.
\end{equation*}
Thus, to complete the proof of Theorem \ref{t:9main}, it remains to show
\begin{equation}\label{e:weak}
m(\{x\in (E^*)^c:|T_{\Om}b(x)|>\lambda/2\})\lc {\lambda}^{-1}\C_\Om\|f\|_{1}.
\end{equation}

Denote $\mathfrak{Q}_k=\{Q\in \mathcal{Q}:\, l(Q)=2^k \}$ and let $B_k=\sum\limits_{Q\in\mathfrak{Q}_k} b_Q$. Then $b$ can be rewritten as $b=\sum\limits_{j\in \mathbb{Z}}B_j$.
Taking a smooth radial nonnegative function $\phi$ on $\R^d$ such that $\supp\ \phi\subset\{x:\frac{1}{2}\leq |x|\leq 2\}$ and
$\sum_j\phi_j(x)=1$ for all $x\in \mathbb{R}^d\backslash\{0\}$, where $\phi_j(x)=\phi(2^{-j}x)$.
Define the operator $T_j$ as
\Be\label{e:2dyda}
T_jh(x)=\int_{\R^d}\Om(x-y)\phi_j(x-y)K(x,y)h(y)dy.
\Ee
Then  $T_{\Om}=\sum\limits_jT_j$. For simplicity, we set $K_j(x,y)=\phi_j(x-y)K(x,y)$.
We write
\[T_{\Om}b(x)=\sum_{n\in\mathbb{Z}}\sum_{j\in\mathbb{Z}}T_jB_{j-n}.\]
Note that $T_jB_{j-n}(x)=0$ if $x\in (E^*)^c$ and $n<100$. Therefore
\begin{equation*}
\begin{split}
m\big(\big\{x\in (E^*)^c:&\,|T_{\Om}b(x)|>\frac{\lambda}{2}\big\}\big)\\
&=m\bigg(\bigg\{x\in (E^*)^c:\bigg|\sum_{n\geq100}\sum_{j\in
\mathbb{Z}}T_jB_{j-n}(x)\bigg|>\frac{\lambda}{2}\bigg\}\bigg).\\
\end{split}
\end{equation*}
Hence, to finish the proof of
of Theorem \ref{t:9main}, it suffices to verify the following estimate:
\begin{equation}\label{e:e^c}
\begin{split}
m\bigg(\bigg\{x\in (E^*)^c:\bigg|\sum_{n\geq100}\sum_{j\in
\mathbb{Z}}T_jB_{j-n}(x)\bigg|>\frac{\lambda}{2}\bigg\}\bigg)\lc {\lambda}^{-1}\C_\Om\|f\|_{1}.\\
\end{split}
\end{equation}
\vskip1cm

\subsection{Some key estimates}\label{s:82}\quad

Some important estimates play key roles in the proof of \eqref{e:e^c}. We present them by some lemmas, which will be proved in Section \ref{s:93} and Section \ref{s:94}.
The first estimate shows that the operator $T_j$ can be approximated by an operator $T_j^n$ in measure, which is defined below.

Let $l_\del(n)=[2\del^{-1}\log_2n]+2$. Here $[a]$ is the integer part of $a$. Let $\eta$ be a nonnegative, radial $C_c^\infty$ function which is supported in $\{|x|\leq1\}$ and $\int_{\R^d}\eta(x)dx=1$. Set $\eta_i(x)=2^{-id}\eta(2^{-i}x)$. Define
$$K_j^n(x,y)=\int_{\R^d}\eta_{j-l_\del(n)}(x-z)K_j(z,y)dz.$$

Notice that $K_j(z,y)$ is supported in $\{2^{j-1}\leq|z-y|\leq 2^{j+1}\}$ and $\eta_{j-l_\del(n)}(x)$ is supported in $\{|x|\leq2^{j-l_\del(n)}\}$, so $K_j^n(x,y)$ is supported in $\{2^{j-2}\leq|x-y|\leq2^{j+2}\}$. Therefore
\Be\label{e:8kjnb}
|K_j^n(x,y)|\lc 2^{-jd}\chi_{\{2^{j-2}\leq|x-y|\leq2^{j+2}\}}.
\Ee
Define the operator $T_j^n$ by
$$T_j^nh(x)=\int_{\R^d}\Om(x-y)K_j^n(x,y)\cdot h(y)dy.$$

\begin{lemma}\label{l:app}
With the notations above, we have
$$m\Big(\Big\{x\in (E^*)^c:\sum\limits_{n\geq100}\Big|
\sum\limits_j\big(T_jB_{j-n}(x)-T_j^nB_{j-n}(x)\big)\Big|>\fr{\lam}{4}\Big\}\Big)
\lc \frac{1}{\lambda}\|\Om\|_1\|f\|_{1}.$$
\end{lemma}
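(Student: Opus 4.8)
The plan is to reduce the claimed measure estimate to an $L^1$ bound on the difference $T_jh - T_j^nh$ that gains a summable factor in $n$, so that summing in $j$ (using the disjoint supports of the cubes $Q \in \mathfrak{Q}_{j-n}$) and then in $n \ge 100$ produces the constant $\|\Om\|_1\|f\|_1$ on the right. First I would apply Chebyshev's inequality to pass from the measure of the superlevel set to $\lambda^{-1}$ times the $L^1((E^*)^c)$-norm of $\sum_{n\ge 100}\big|\sum_j (T_jB_{j-n}-T_j^nB_{j-n})\big|$, and then bound this by $\sum_{n\ge 100}\sum_j \|T_jB_{j-n}-T_j^nB_{j-n}\|_{L^1(\R^d)}$. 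The main point is therefore a pointwise/integral kernel estimate: writing $(T_j - T_j^n)h(x) = \int \Om(x-y)\big(K_j(x,y)-K_j^n(x,y)\big)h(y)\,dy$, I want to show that
\begin{equation*}
\big\|(T_j-T_j^n)b_Q\big\|_1 \lc 2^{-\del l_\del(n)}\,\|\Om\|_1\,\|b_Q\|_1 \lc n^{-2}\,\|\Om\|_1\,\|b_Q\|_1
\end{equation*}
for each cube $Q \in \mathfrak{Q}_{j-n}$, uniformly in $j$.

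The key step is the kernel difference estimate. Since $K_j^n(x,y) = \int \eta_{j-l_\del(n)}(x-z)K_j(z,y)\,dz$ and $\int \eta_{j-l_\del(n)} = 1$, I would write
\begin{equation*}
K_j(x,y) - K_j^n(x,y) = \int_{\R^d} \eta_{j-l_\del(n)}(x-z)\big(K_j(x,y) - K_j(z,y)\big)\,dz,
\end{equation*}
and estimate $K_j(x,y) - K_j(z,y)$ for $|x-z| \le 2^{j-l_\del(n)}$ (which is $\le 2^{-1}2^{j-1} \le \tfrac12|x-y|$ once $l_\del(n)\ge 1$, so the regularity hypothesis \eqref{e:8kr} applies) using $K_j = \phi_j K$: the contribution from the smoothness of $K$ in the first variable is $\lc 2^{-jd}(|x-z|/2^j)^\del$ by \eqref{e:8kr} and \eqref{e:8kb}, while the contribution from $\phi_j(x-\cdot)-\phi_j(z-\cdot)$ is $\lc 2^{-jd}\cdot 2^{-j}|x-z| \le 2^{-jd}(|x-z|/2^j)^\del$ since $\del \le 1$ and $|x-z|/2^j \le 1$. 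Integrating against $\eta_{j-l_\del(n)}$ and using $|x-z| \le 2^{j-l_\del(n)}$ gives
\begin{equation*}
|K_j(x,y) - K_j^n(x,y)| \lc 2^{-jd}\,2^{-\del l_\del(n)}\,\chi_{\{2^{j-2}\le|x-y|\le 2^{j+2}\}},
\end{equation*}
which by the definition of $l_\del(n)$ is $\lc 2^{-jd} n^{-2}\,\chi_{\{2^{j-2}\le|x-y|\le 2^{j+2}\}}$. One also needs the analogous crude bounds $|K_j(x,y)|, |K_j^n(x,y)| \lc 2^{-jd}\chi_{\{|x-y|\approx 2^j\}}$ — the second of these is \eqref{e:8kjnb}.

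From here the rest is routine. By Minkowski's inequality and Fubini,
\begin{equation*}
\|(T_j-T_j^n)b_Q\|_1 \le \int\!\!\int |\Om(x-y)|\,|K_j(x,y)-K_j^n(x,y)|\,|b_Q(y)|\,dx\,dy \lc n^{-2}\Big(\int_{|u|\approx 2^j}\frac{|\Om(u)|}{2^{jd}}\,du\Big)\|b_Q\|_1 \lc n^{-2}\|\Om\|_1\|b_Q\|_1,
\end{equation*}
using \eqref{e:2Home} to see $\int_{|u|\approx 2^j}|\Om(u)|2^{-jd}\,du \lc \|\Om\|_1$. Summing over $Q \in \mathfrak{Q}_{j-n}$ (whose union has total $b$-mass controlled, so $\sum_{Q\in\mathfrak{Q}_{j-n}}\|b_Q\|_1 = \|B_{j-n}\|_1$), then over $j \in \Z$ (giving $\sum_j\|B_{j-n}\|_1 = \|b\|_1 \lc \|f\|_1$ by (cz-iii), (cz-v)), and finally over $n \ge 100$ (giving $\sum_{n\ge100} n^{-2} < \infty$), we obtain the bound $\lambda^{-1}\|\Om\|_1\|f\|_1$, as claimed. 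The main obstacle is purely the kernel difference estimate above — specifically making sure the mollification scale $2^{j-l_\del(n)}$ is small enough relative to $2^j$ that the regularity condition \eqref{e:8kr} is usable, and tracking that the resulting gain $2^{-\del l_\del(n)}$ is indeed $\lesssim n^{-2}$; everything after that is bookkeeping with disjointness and summation.
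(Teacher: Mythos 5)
Your proof is correct and follows essentially the same route as the paper: reduce by Chebyshev to an $L^1$ bound on $\|(T_j-T_j^n)b_Q\|_1$, split the kernel difference $K_j - K_j^n$ (written as a mollification of $K_j(x,y)-K_j(\cdot,y)$, equivalent to the paper's expression after the substitution $z\mapsto x-z$) into a regularity-of-$K$ term and a $\phi_j$-smoothness term, each giving the factor $2^{-\del l_\del(n)}\lc n^{-2}$, then sum over $Q$, $j$, $n$. Nothing is missing.
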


By  Lemma \ref{l:app}, the proof of \eqref{e:e^c}  now is reduced to verify the following estimate:
\Be\label{e:thm}
m\bigg(\bigg\{x\in (E^*)^c:
\bigg|\sum_{n\geq100}\sum_{j\in \mathbb{Z}}T_j^nB_{j-n}(x)\bigg|>\frac{\lambda}{4}\bigg\}\bigg)\lc
{\lambda}^{-1}\C_\Om\|f\|_{1}.
\Ee

Our second lemma shows that,  (\ref{e:thm}) holds if $\Om$ is restricted in some subset of $\S^{d-1}$. More precisely, for fixed $n\ge100$, denote $D^\iota=\{\theta\in\S^{d-1}:\,|\Om(\theta)|\geq2^{\iota n}\|\Om\|_1\}$, where $\iota>0$ will be chosen later.
The operator $T_{j,\iota}^n$ is defined by
$$T_{j,\iota}^nh(x)=\int_{\R^d}\Om\chi_{D^{\iota}}(\fr{x-y}{|x-y|})
K_j^n(x,y)\cdot h(y)dy.$$
We have the following result.
\begin{lemma}\label{l:2cur}
Under the conditions of Theorem \ref{t:9main}, for $f\in L^1(\R^d)$,  we have
\Bes
m\bigg(\bigg\{x\in (E^*)^c:
\bigg|\sum_{n\geq100}\sum_{j\in \mathbb{Z}}T_{j,\iota}^nB_{j-n}(x)\bigg|>
\frac{\lambda}{8}\bigg\}\bigg)\lc\C_\Om\fr{\|f\|_1}{\lam}.
\Ees
\end{lemma}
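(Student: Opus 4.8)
The plan is to treat Lemma \ref{l:2cur} as the \textbf{easy} piece of the decomposition: by the $L\log^+L$ hypothesis the set $D^\iota$ on which $\Omega$ is large carries only a small fraction of the $L^1$-mass of $\Omega$, so this part needs no oscillation or $TT^*$ argument — only crude size estimates on the operators $T_{j,\iota}^n$, an $\ell^1$-summation in $j$ and $n$, and Chebyshev's inequality. All of the genuine difficulty is reserved for the complementary operator built from $\Omega\chi_{(D^\iota)^c}$, which is handled in Lemmas \ref{l:L^2} and \ref{l:L^1}.

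First I would prove a size bound for $T_{j,\iota}^n$ that is uniform in $j$ and $n$, namely
\[
\|T_{j,\iota}^n h\|_1 \lc \Big(\int_{D^\iota}|\Omega(\theta)|\,d\theta\Big)\|h\|_1 .
\]
This follows from the support and size estimate \eqref{e:8kjnb} for $K_j^n$: writing $\|T_{j,\iota}^n h\|_1$ as a double integral, using Fubini, and introducing polar coordinates $x=y+r\theta$ with $r\approx 2^j$, the inner $x$-integral of $|\Omega\chi_{D^\iota}(\tfrac{x-y}{|x-y|})|\,|K_j^n(x,y)|$ is $\lc 2^{-jd}\cdot 2^{jd}\int_{D^\iota}|\Omega|=C\int_{D^\iota}|\Omega|$. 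Summing over $j\in\mathbb{Z}$ and using that the cubes of $\mathcal{Q}$ are pairwise disjoint, so that $\sum_j\|B_{j-n}\|_1=\|b\|_1\lc\|f\|_1$ by (cz-v), gives, for each fixed $n\ge100$,
\[
\Big\|\sum_{j\in\mathbb{Z}}T_{j,\iota}^n B_{j-n}\Big\|_1 \lc \Big(\int_{D^\iota}|\Omega(\theta)|\,d\theta\Big)\|f\|_1 .
\]

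The second step is the summation over $n$, where the point is to unwind the definition of $D^\iota$ and interchange sum and integral:
\[
\sum_{n\ge100}\int_{D^\iota}|\Omega(\theta)|\,d\theta
=\int_{\mathbb{S}^{d-1}}|\Omega(\theta)|\,\#\big\{n\ge100:\,2^{\iota n}\le|\Omega(\theta)|/\|\Omega\|_1\big\}\,d\theta
\lc \frac{1}{\iota}\int_{\mathbb{S}^{d-1}}|\Omega(\theta)|\big(1+\log^+(|\Omega(\theta)|/\|\Omega\|_1)\big)\,d\theta
\le \frac{\mathcal{C}_\Omega}{\iota},
\]
by the definition \eqref{e:7constantom} of $\mathcal{C}_\Omega$; since $\iota$ is a fixed positive constant this is $\lc\mathcal{C}_\Omega$. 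In particular the double series $\sum_{n\ge100}\sum_{j}T_{j,\iota}^n B_{j-n}$ converges absolutely in $L^1(\R^d)$, so no convergence issue arises. Combining the two displays with Chebyshev's inequality yields
\[
m\Big(\Big\{x\in(E^*)^c:\Big|\sum_{n\ge100}\sum_{j\in\mathbb{Z}}T_{j,\iota}^n B_{j-n}(x)\Big|>\frac{\lambda}{8}\Big\}\Big)
\le \frac{8}{\lambda}\sum_{n\ge100}\Big\|\sum_{j}T_{j,\iota}^n B_{j-n}\Big\|_1
\lc \frac{\|f\|_1}{\lambda}\sum_{n\ge100}\int_{D^\iota}|\Omega|
\lc \mathcal{C}_\Omega\frac{\|f\|_1}{\lambda},
\]
which is the assertion.

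I do not anticipate a real obstacle here; the only point deserving care is the $n$-summation. Estimating each term by the direct Chebyshev bound $\int_{D^\iota}|\Omega|\lc\mathcal{C}_\Omega/(\iota n)$ and then summing would produce only the divergent harmonic series, so one must sum over $n$ first and recover $\mathcal{C}_\Omega$ via the Fubini computation above. Everything else is a routine combination of the kernel size bound \eqref{e:8kjnb}, disjointness of the Calder\'on--Zygmund cubes, and (cz-v).
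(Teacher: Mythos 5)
Your proof is correct and follows essentially the same route as the paper's: a crude $L^1$ kernel bound $\|T_{j,\iota}^n h\|_1\lesssim(\int_{D^\iota}|\Omega|)\|h\|_1$ via \eqref{e:8kjnb}, summation over $j$ using (cz-v), Chebyshev, and then the Fubini-style interchange of $\sum_n$ with $\int_{\S^{d-1}}$ to convert $\sum_n\int_{D^\iota}|\Omega|$ into the $\log^+$ integral controlled by $\mathcal{C}_\Omega$. Your closing caution about summing the pointwise Chebyshev bound $\mathcal{C}_\Omega/(\iota n)$ directly (which would give a divergent series) correctly flags the one step where the order of operations matters, and the paper's proof does exactly what you propose.
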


Thus, by Lemma \ref{l:2cur}, to finish the proof of Theorem \ref{t:9main}, it suffices to verify (\ref{e:thm}) under the condition that the kernel function $\Omega$ satisfies $\|\Om\|_\infty\leq2^{\iota n}\|\Om\|_1$ in each $T_j^{n}$.

In the following, we need to make a
microlocal decomposition of the kernel. To do this, we give a partition of unity on the unit surface $\S^{d-1}$.
Choose $n\geq100$. Let $\Theta_n=\{e^n_v\}_v$ be a collection of unit vectors on $\S^{d-1}$
which satisfies the following two conditions:

(a)\ $|e^n_v-e^n_{v'}|\geq 2^{-n\ga-4}$,  if $v\neq v'$;

(b)\ If $\theta\in \S^{d-1}$, there exists an $e^n_v$ such that $|e^n_v-\theta|\leq 2^{-n\ga-4}$.

\noindent The constant $0<\ga<1$ in (a) and (b) will be chosen later. To choose such an $\Theta_n$, we may simply take a maximal collection $\{e^n_v\}_v$ for which (a) holds. Notice that there are $C2^{n\ga(d-1)}$
elements in the collection $\{e^n_v\}_v$. For every $\tet\in\S^{d-1}$,
there only exists finite $e^n_v$ such that $|e^n_v-\tet|\leq2^{-n\ga-4}$. Now we can construct an associated partition of unity
on the unit surface $\S^{d-1}$. Let $\zeta$ be a smooth, nonnegative, radial function with $\zeta(u)=1$ for
$|u|\leq \fr{1}{2}$ and $\zeta(u)=0$ for $|u|>1$. Set $$\tilde{\Ga}^n_v(\xi)=\zeta\Big(2^{n\ga}(\fr{\xi}{|\xi|}-e^n_v)\Big)$$
and define
$$\Ga^n_v(\xi)=\tilde{\Ga}^n_v(\xi)\Big(\sum\limits_{e^n_v\in\Theta_n}\tilde{\Ga}^n_v(\xi)\Big)^{-1}. $$ Then it is easy to see that $\Ga^n_v$ is homogeneous of degree 0 with
$$\sum\limits_v\Ga^n_v(\xi)=1, \text{ for all $\xi\neq0$ and all $n$}. $$
Now we define operator $T_j^{n,v}$ by
\Be\label{e:Def T_j^{n,v}}
T_j^{n,v}h(x)=\int_{\R^d}\Om(x-y)\Ga^n_v(x-y)\cdot K_j^n(x,y)\cdot h(y)dy.
\Ee
Therefore, we have
$$T_j^n=\sum\limits_vT_j^{n,v}.$$

In the sequel, we need to separate the phase into different directions. Hence we define a multiplier operator by
$$\widehat{G_{n,v}h}(\xi)=\Phi(2^{n\ga}\inn{e^n_v}{{\xi}/{|\xi|}})\hat{h}(\xi),$$
where $h$ is a Schwartz function and $\Phi$ is a smooth, nonnegative, radial function such that
$0\leq\Phi(x)\leq1$ and $\Phi(x)=1$ on $|x|\leq2$, $\Phi(x)=0$ on $|x|>4$.
Now we can split $T_j^{n,v}$ into two parts:
\begin{equation*}
  T_j^{n,v}=G_{n,v}T_j^{n,v}+(I-G_{n,v})T_j^{n,v}.
\end{equation*}

The following lemma gives the $L^2$ estimate involving $G_{n,v}T_j^{n,v}$, which will be proved in next section.

\begin{lemma}{\label{l:L^2}}
Let $n\geq100$. Suppose $\|\Om\|_\infty\leq2^{\iota n}\|\Om\|_1$ in $T_j^n$, then we have the following estimate
$$\Big\|\sum\limits_j\sum\limits_vG_{n,v}T_j^{n,v}B_{j-n}\Big\|^2_2\lc2^{-n\ga+2n\iota}\lam\|\Om\|_1\|f\|_1.$$
\end{lemma}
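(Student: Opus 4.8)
The plan is to estimate the $L^2$ norm by a $TT^*$ (or almost-orthogonality) argument, exploiting the fact that the multiplier $G_{n,v}$ localizes frequencies to a cone of aperture $\sim 2^{-n\ga}$ around $e^n_v$, while the kernel piece $\Om(x-y)\Ga^n_v(x-y)K_j^n(x,y)$ is spatially localized (in the angular variable $(x-y)/|x-y|$) to the same cone. First I would expand
\[
\Big\|\sum_j\sum_v G_{n,v}T_j^{n,v}B_{j-n}\Big\|_2^2
=\sum_{j,j'}\sum_{v,v'}\inn{G_{n,v}T_j^{n,v}B_{j-n}}{G_{n,v'}T_{j'}^{n,v'}B_{j'-n}},
\]
and handle the sum in three stages: (1) the angular sum over $v,v'$; (2) the scale sum over $j,j'$; (3) the interaction of the pieces with the bad functions $B_{j-n}$.

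For the angular sum I would use the almost-orthogonality of the $G_{n,v}$: since $\Phi(2^{n\ga}\inn{e^n_v}{\xi/|\xi|})$ is supported where $\xi/|\xi|$ lies within $\sim 2^{-n\ga}$ of the great hypersphere $e^n_v{}^\perp$, a fixed direction $\xi/|\xi|$ can lie in the support of only boundedly many $G_{n,v}$ that are "compatible" in the sense of also meeting the support of $\Ga^n_{v}$; more precisely, $G_{n,v}G_{n,v'}^* = 0$ unless $|e^n_v - e^n_{v'}|\lc 2^{-n\ga}$ (up to the antipodal ambiguity), which by condition (a) on $\Theta_n$ forces $v'$ to range over a set of bounded cardinality for each $v$. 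So the double sum over $v,v'$ collapses (after Cauchy--Schwarz) to $\sum_v$ of diagonal-type terms. For the scale sum, the kernel $K_j^n$ is supported in the annulus $2^{j-2}\le|x-y|\le 2^{j+2}$, so $T_j^{n,v}$ and $T_{j'}^{n,v'}$ have essentially disjoint supports of their kernels in $|x-y|$ when $|j-j'|$ is large; combined with the cancellation/smoothing built into $K_j^n$ (recall $K_j^n$ was obtained by mollifying $K_j$ at scale $2^{j-l_\del(n)}$, and that $l_\del(n)\sim \del^{-1}\log_2 n$) one gets a decay factor in $|j-j'|$, reducing to $|j-j'|\lc 1$ up to an acceptable loss.

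Once the sums over $v,v'$ and $j,j'$ are reduced to their near-diagonals, the remaining task is to bound a single term
\[
\inn{G_{n,v}T_j^{n,v}B_{j-n}}{G_{n,v}T_{j}^{n,v}B_{j-n}}\lc \|G_{n,v}T_j^{n,v}B_{j-n}\|_2^2,
\]
and here I would estimate $\|T_j^{n,v}B_{j-n}\|_2$ by a combination of Schur's test / Young's inequality using the pointwise bound $|K_j^n(x,y)|\lc 2^{-jd}\chi_{\{2^{j-2}\le|x-y|\le 2^{j+2}\}}$ from \eqref{e:8kjnb} together with $|\Om\Ga^n_v|\lc \|\Om\|_\infty\lc 2^{\iota n}\|\Om\|_1$ on the relevant cone, and the fact that the cone has solid angle $\sim 2^{-n\ga(d-1)}$. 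This produces the operator-norm gain $2^{-n\ga(d-1)} 2^{\iota n}$, one power of which is absorbed; summing $\|B_{j-n}\|_2^2$ over the (essentially disjoint-support, by (cz-iii)) cubes in $\mathfrak{Q}_{j-n}$ and using $\|B_{j-n}\|_2^2\lc \frac{\lam}{\C_\Om}\|B_{j-n}\|_1$ together with $\sum_j\|B_{j-n}\|_1\lc\|b\|_1\lc\|f\|_1$ (by (cz-v)) yields the claimed bound $2^{-n\ga+2n\iota}\lam\|\Om\|_1\|f\|_1$, after arranging the exponents so that the net $v,v'$ and $j,j'$ losses contribute no worse than $2^{n\iota}$ overall.

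The main obstacle will be extracting the genuine $2^{-n\ga}$ gain: the crude Schur bound on each $T_j^{n,v}$ only sees the solid angle $2^{-n\ga(d-1)}$ of the cone and gives $\|T_j^{n,v}\|_{2\to2}\lc 2^{\iota n}2^{-n\ga(d-1)}\|\Om\|_1$, which by itself is not enough to control the \emph{sum} over the $\sim 2^{n\ga(d-1)}$ values of $v$. The decisive point is that the frequency localization by $G_{n,v}$, when paired with the spatial angular localization of $\Ga^n_v$ and the mollification scale $2^{j-l_\del(n)}$, makes the family $\{G_{n,v}T_j^{n,v}\}_v$ (nearly) orthogonal in the appropriate sense, so that $\|\sum_v G_{n,v}T_j^{n,v}B_{j-n}\|_2^2\lc \sum_v\|G_{n,v}T_j^{n,v}B_{j-n}\|_2^2$ and only one factor of the solid angle appears after summation — this is exactly the $TT^*$ computation modeled on Seeger's argument and is where the careful choice of $\ga$ relative to $\del$ (through $l_\del(n)$) and of $\iota$ must be made.
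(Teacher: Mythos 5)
Your overall outline is in the right spirit (exploit the frequency localization of $G_{n,v}$ and the angular/spatial localization of $\Ga^n_v$ to get a gain), but there are three concrete problems with the execution, and the last one is fatal.

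First, the angular overlap count is wrong for $d\geq 3$. The multiplier $\Phi(2^{n\ga}\inn{e^n_v}{\xi/|\xi|})$ is supported where $\xi/|\xi|$ lies in a $\sim 2^{-n\ga}$-neighborhood of the great hypersphere $e^n_v{}^\perp\cap\S^{d-1}$. For $d\geq 3$ two such great hyperspheres always intersect, so the frequency supports of $G_{n,v}$ and $G_{n,v'}$ are never disjoint; $G_{n,v}G_{n,v'}^*\neq 0$ is not restricted to $|e^n_v-e^n_{v'}|\lc 2^{-n\ga}$. The correct count is that a fixed direction $\xi/|\xi|$ meets the supports of about $2^{n\ga(d-2)}$ of the multipliers, because the admissible $e^n_v$'s fill a $2^{-n\ga}$-neighborhood of the $(d-2)$-sphere $\xi^\perp\cap\S^{d-1}$. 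This is exactly the paper's observation \eqref{e:obser}, and plugging in $2^{n\ga(d-2)}$ rather than $O(1)$ is what makes the final exponent come out to $-n\ga+2n\iota$ rather than something else. Your proposal would miscount the loss, although this particular gap is arithmetic and fixable.

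Second, there is no mechanism producing off-diagonal decay in $j,j'$ from the mollification $K_j\mapsto K_j^n$, and the paper's proof does not use any such decay. The $j$-sum is handled in \eqref{e:L^2con} with no near-diagonal reduction at all: the dominating kernels $H_j^{n,v}=2^{-jd}\chi_{E_j^{n,v}}$ (slabs of dimensions $2^j\times(2^{j-n\ga})^{d-1}$) are composed for \emph{all} $i\le j$, and the sum $\sum_{i\le j}H_j^{n,v}*H_i^{n,v}*|B_{i-n}|(x)$ is bounded pointwise by $\fr{\lam}{\C_\Om}2^{-2n\ga(d-1)}$ using only $\|H_i^{n,v}\|_1\lc 2^{-n\ga(d-1)}$ together with (cz-iii) (disjointness of the cubes of $\mathcal{Q}$) and (cz-v) ($\int|b_Q|\lc\fr{\lam}{\C_\Om}|Q|$). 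Claiming a reduction to $|j-j'|\lc 1$ is an unsupported shortcut.

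Third, and decisively, the final inequality $\|B_{j-n}\|_2^2\lc\fr{\lam}{\C_\Om}\|B_{j-n}\|_1$ is false. In the Calder\'on-Zygmund decomposition only the good part $g$ has $L^2$ control (property (cz-ii)); the bad part $b$ and its pieces $B_{j-n}=\sum_{Q\in\mathfrak{Q}_{j-n}}b_Q$ come with no $L^\infty$ or $L^2$ bound whatsoever, only $\|b_Q\|_1\lc\fr{\lam}{\C_\Om}|Q|$. So a Schur/Young estimate of $\|T_j^{n,v}\|_{2\to 2}$ applied to $\|B_{j-n}\|_2$ cannot close the argument. The paper instead performs a genuine $TT^*$ in which the intermediate quantity $H_j^{n,v}*H_i^{n,v}*|B_{i-n}|(x)$ is bounded \emph{pointwise} using the geometry of the slabs and the $L^1$ bound on each $b_Q$, after which integration against $|B_{j-n}|$ only costs $\sum_j\|B_{j-n}\|_1\lc\|f\|_1$. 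This is the step you would need to supply, and it is the heart of the lemma.
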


The terms involving $(I-G_{n,v})T_j^{n,v}$ are more complicated. For convenience, we set $L^{n,v}_j=(I-G_{n,v})T_j^{n,v}$.
In Section \ref{s:94}, we shall prove the following lemma.
\begin{lemma}\label{l:L^1}
Suppose $\|\Om\|_\infty\leq2^{\iota n}\|\Om\|_1$ in $T_j^{n}$. With the notations above, we have
$$m\Big(\Big\{x\in (E^*)^c: \Big|\sum_{n\geq100}\sum\limits_j\sum_vL_j^{n,v}B_{j-n}(x)\Big|>\fr{\lam}{8}\Big\}\Big)\lc\lam^{-1}\|\Om\|_1\|f\|_1.$$
\end{lemma}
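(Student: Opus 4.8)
The plan is the following. Since $l_\del(n)=[2\del^{-1}\log_2 n]+2$ grows only logarithmically in $n$, a geometric loss in $n$ is affordable: fix a small $\eps>0$ and a constant $c>0$ with $\sum_{n\geq100}c\,2^{-\eps n}\leq\tfrac18$, and bound the left-hand side of the lemma by
$$\sum_{n\geq100}m\Big(\Big\{x\in(E^*)^c:\Big|\sum_{j}\sum_{v}L_j^{n,v}B_{j-n}(x)\Big|>c\,2^{-\eps n}\lam\Big\}\Big).$$
Hence it suffices to prove, for each $n\geq100$ (recall the standing assumption $\|\Om\|_\infty\leq2^{\iota n}\|\Om\|_1$ inside every $T_j^n$), that
$$m\Big(\Big\{x\in(E^*)^c:\Big|\sum_{j}\sum_{v}L_j^{n,v}B_{j-n}(x)\Big|>c\,2^{-\eps n}\lam\Big\}\Big)\lc 2^{-\eps' n}\lam^{-1}\|\Om\|_1\|b\|_1$$
for some $\eps'>\eps$, and then to sum in $n$ and invoke $\|b\|_1\lc\|f\|_1$ from (cz-v).

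The operator $L_j^{n,v}=(I-G_{n,v})T_j^{n,v}$ cannot be estimated in $L^1$ directly, because the symbol $1-\Phi(2^{n\ga}\inn{e^n_v}{\xi/|\xi|})$ of $I-G_{n,v}$ is that of a directional Calder\'on--Zygmund operator whose weak-$(1,1)$ bound may grow like $2^{Cn\ga}$. The device is to insert a Fourier multiplier operator $M_n$ of weak type $(1,1)$ with a bound controllable in $n$, so that, schematically, $\sum_j\sum_vL_j^{n,v}B_{j-n}=M_n\big(\sum_j\sum_vR_j^{n,v}B_{j-n}\big)$, where the operators $R_j^{n,v}$ still carry the conical cutoff of $T_j^{n,v}$ but now have integrable kernels. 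Since $M_n$ maps $L^1$ into $L^{1,\infty}$ with the controllable bound, the per-$n$ estimate reduces, through Chebyshev's inequality, to the $L^1$ bound $\big\|\sum_j\sum_v R_j^{n,v}B_{j-n}\big\|_1\lc 2^{-\kappa n}\|\Om\|_1\|b\|_1$, with $\kappa>0$ chosen large enough to absorb both the weak-$(1,1)$ bound of $M_n$ and the factor $2^{\eps n}$ lost in the Chebyshev step. Using the vanishing mean $\int b_Q=0$ for $Q\in\mathfrak{Q}_{j-n}$ one writes $R_j^{n,v}B_{j-n}(x)=\sum_{Q\in\mathfrak{Q}_{j-n}}\int\big(R_j^{n,v}(x,y)-R_j^{n,v}(x,c_Q)\big)b_Q(y)\,dy$ with $c_Q$ the center of $Q$; then, since $\sum_j\sum_{Q\in\mathfrak{Q}_{j-n}}\|b_Q\|_1=\|b\|_1$ and the bounded overlap of the caps $\{\theta:|\theta-e^n_v|\le2^{-n\ga}\}$ gives $\sum_v\int_{|\theta-e^n_v|\le2^{-n\ga}}|\Om(\theta)|\,d\theta\lc\|\Om\|_1$, matters are reduced to the kernel estimate
$$\sum_{v}\sup_{y\in Q}\int_{\R^d}\big|R_j^{n,v}(x,y)-R_j^{n,v}(x,c_Q)\big|\,dx\lc 2^{-\kappa n}\|\Om\|_1\qquad(Q\in\mathfrak{Q}_{j-n}).$$

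This last estimate is the heart of the matter and the main obstacle. The kernel of $T_j^{n,v}$ contains the factor $\Om(x-y)=\Om\big((x-y)/|x-y|\big)$, which has no smoothness in the angular variables, so the displacement $y\mapsto c_Q$ of size $l(Q)=2^{j-n}$ cannot be handled by differentiating $\Om$; one must exploit instead that $\Om$ is constant in the radial variable, that $\Ga^n_v$ is homogeneous of degree $0$, and that $K_j^n$ is smooth at scale $2^{j-l_\del(n)}$. Writing $R_j^{n,v}(x,y)$ as an oscillatory integral with phase of the form $\inn{x-y}{\xi}$ and amplitude assembled from the cutoff $1-\Phi(2^{n\ga}\inn{e^n_v}{\xi/|\xi|})$ and the Fourier transform in the first variable of $\Om\,\Ga^n_v\,K_j^n(\cdot,y)$, one estimates this kernel directly by the method of stationary phase: on the support of the cutoff the phase is non-stationary in the $e^n_v$-direction, so repeated integration by parts along that direction — in which only $K_j^n$ is differentiated, at a cost $2^{l_\del(n)-j}$ per step, while \eqref{e:8kb}--\eqref{e:8kr} and \eqref{e:8kjnb} control the amplitudes and the restriction $\|\Om\|_\infty\leq2^{\iota n}\|\Om\|_1$ handles the low-frequency remainder — produces the gain $2^{-\kappa n}$ (with a polynomial-in-$n$ constant) while keeping the $x$-integral finite; the difference $y\to c_Q$ then costs only a harmless factor because $2^{j-n}\ll2^{j-l_\del(n)}$. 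The difficulty is precisely that, $T_\Om$ being non-convolution, this must be carried out by hand on the kernel rather than through multiplier theory, and that the gain $2^{-\kappa n}$ has to be made large enough — by choosing the number of integrations by parts sufficiently large — to coexist with the $2^{-n\ga+2n\iota}$ budget already allotted to Lemma \ref{l:L^2}; this is achieved by first fixing $\iota$ small, then $\ga\in(2\iota,1)$, and only then the number of integrations by parts.
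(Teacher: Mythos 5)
Your outline captures the high‑level ingredients (pigeonhole in $n$, a weak $(1,1)$ multiplier factor with controllable growth, cancellation of $b_Q$, integration by parts exploiting non‑stationarity, and the final smallness hierarchy of the parameters), but it omits the one device that actually makes the two mechanisms coexist: a Littlewood--Paley decomposition of $I-G_{n,v}$ in the output variable, $I=V_m+\sum_{k<m}\Lambda_k$ with the pivotal choice $m=j-[n\eps_0]$. In the paper this splits $L_j^{n,v}=(I-G_{n,v})V_mT_j^{n,v}+\sum_{k<m}(I-G_{n,v})\Lambda_kT_j^{n,v}$, and the two pieces are handled by disjoint arguments: the low‑frequency piece $(I-G_{n,v})V_mT_j^{n,v}$ is where the weak $(1,1)$ bound of $(I-G_{n,v})$ is applied and where $\int b_Q=0$ is used; the high‑frequency pieces $D_{j,k}^{n,v}$ are estimated outright in $L^1$ by integrating by parts $N_1$ times in $r$, where each step gains $|\langle\theta,\xi\rangle|^{-1}\lesssim 2^{n\ga}|\xi|^{-1}$ and costs $2^{l_\del(n)-j}$, producing the factor $2^{(-j+k)N_1+n\ga N_1}$ that sums over $k<m$ and $j$. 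Neither mechanism works globally in frequency: the radial integration by parts fails for small $|\xi|$ (the factor $2^{n\ga}2^{l_\del(n)-j}/|\xi|$ exceeds $1$), and the cancellation of $b_Q$, once written in the oscillatory‑integral form, produces the factor $|e^{-i\langle y,\xi\rangle}-e^{-i\langle c_Q,\xi\rangle}|\lesssim|\xi|\,2^{j-n}$, which is only small when $|\xi|\lesssim 2^{-m}=2^{-j+[n\eps_0]}$. Your proposal, which tries to run both steps on the full operator at once, does not explain how this tension is resolved; the assertion that $\|\Om\|_\infty\leq 2^{\iota n}\|\Om\|_1$ ``handles the low‑frequency remainder'' is not correct --- that hypothesis controls only the $\theta$‑integral of $|\Om|$ over the caps, not the low frequencies in $\xi$.

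Two further points. First, the schematic factorization $\sum_j\sum_v L_j^{n,v}B_{j-n}=M_n\big(\sum_j\sum_v R_j^{n,v}B_{j-n}\big)$ with a single multiplier $M_n$ is not available: the operator $G_{n,v}$ depends on $v$, so there is no $v$‑independent multiplier to extract; the paper instead applies the pigeonhole principle \emph{in $(n,v)$ jointly} (with thresholds $C_{\mu,d}2^{-n\mu-n\ga(d-1)}\lam$) and then invokes the weak $(1,1)$ bound of each individual $I-G_{n,v}$ with Mihlin bound $2^{n\ga([d/2]+1)}$ --- and only on the low‑frequency piece. Second, your display for exploiting $\int b_Q=0$ differences the kernel $R_j^{n,v}(x,y)-R_j^{n,v}(x,c_Q)$ directly; taken at face value this differences the rough factor $\Om\!\left(\frac{x-y}{|x-y|}\right)$, which has no usable modulus of continuity. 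The paper avoids this by first changing to polar coordinates $x-y=r\theta$, which fixes $\Om(\theta)$ outside the $(\xi,r)$‑integral, and then splits $A_m(x,y)-A_m(x,y_0)$ into three pieces $F_{m,1},F_{m,2},F_{m,3}$ that difference only the phase $e^{-i\langle y,\xi\rangle}$, the smoothed kernel $K_j^n$, and the weight $(1+2^{-2m}|x-y-r\theta|^2)^{-N}$, respectively --- never $\Om$ itself. The organization around $m=j-[n\eps_0]$, the three‑term splitting of the low‑frequency difference, and the separate $L^1$ bound of $D_{j,k}^{n,v}$ are the essential missing steps in your proposal.
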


\subsection {Proof of \eqref{e:thm}.}\quad

We now complete the proof of \eqref{e:thm} under the condition $\|\Om\|_\infty\leq2^{\iota n}\|\Om\|_1$ in each $T_j^n$.  By Chebyshev's inequality,
\begin{equation*}
\begin{split}
&\ \ \ \ m\Big(\Big\{x\in (E^*)^c:\Big|\sum\limits_{n\geq100}\sum\limits_jT_j^{n}B_{j-n}(x)\Big|>\fr{\lam}{4}\Big\}\Big)\\
&\lc {\lam^{-2}}\Big\|\sum\limits_{n\geq100}\sum\limits_{j}\sum\limits_{v}G_{n,v}T_j^{n,v}B_{j-n}\Big\|_2^2
\\
&\ \ +m\Big(\Big\{x\in (E^*)^c: \Big|\sum\limits_{n\geq100}\sum\limits_{j}\sum\limits_{v}L_j^{n,v}B_{j-n}(x)\Big|>\fr{\lam}{8}\Big\}\Big)\\
&=:I+II.\\
\end{split}
\end{equation*}

Using Lemma \ref{l:L^1}, we can get the desired estimate of $II$. Next we consider the term $I$. Choose $0<\iota<\fr{\ga}{2}$. Minkowski's inequality and Lemma \ref{l:L^2} implies
\begin{equation*}
\begin{split}
I&\lc\lam^{-2}\Big(\sum\limits_{n\geq100}\Big\|\sum\limits_{j}\sum\limits_{v}G_{n,v}T_j^{n,v}B_{j-n}\Big\|_2\Big)^2\\
&\lc\lam^{-2}\Big(\sum\limits_{n\geq100}(2^{-n\ga+2n\iota}\|\Om\|_1\lam\|f\|_1)^{\fr{1}{2}}\Big)^{2}\lc \lam^{-1}\|\Om\|_1\|f\|_1.
\end{split}
\end{equation*}
We hence complete the proof of  Theorem \ref{t:9main} once Lemmas \ref{l:app}-\ref{l:L^1} hold.
\vskip1cm

\section {proofs of Lemmas \ref{l:app}-\ref{l:L^2}}\label{s:93}

\subsection {Proof of Lemma \ref{l:app}}\quad

We first focus on the proof of Lemma \ref{l:app}.
By the definitions of $T_j$ and $T_j^n$,
\begin{equation*}
\begin{split}
\|T_jf-T_j^nf\|_1&=\int_{\R^d}\Big|\int_{\R^d}\Om(x-y)(K_j(x,y)-K_j^n(x,y))f(y)dy\Big|dx\\
&= \int_{\R^d}\Big|\int_{\R^d}\Om(x-y)\int_{\R^d}\eta_{j-l_\del(n)}(z)(K_j(x,y)-K_j(x-z,y))dzf(y)dy\Big|dx.\\
\end{split}
\end{equation*}
By the definition of $K_j(x,y)$, we have
\Bes
|K_j(x,y)-K_j(x-z,y)|\leq|\phi_j(x-y)(K(x,y)-K(x-z,y))|+|\phi_j(x-y)-\phi_j(x-z-y)||K(x-z,y)|.
\Ees
Consider the first term firstly. Note that $|z|\leq2^{j-l_\del(n)}$ and $2^{j-1}\leq|x-y|\leq2^{j+1}$, then we have $2|z|<|x-y|$. By the regularity condition \eqref{e:8kr}, the first term above is bounded by
$$\fr{|z|^\del}{|x-y|^{d+\del}}\chi_{\{2^{j-1}\leq|x-y|\leq2^{j+1}\}}\lc n^{-2}2^{-jd}\chi_{\{2^{j-1}\leq|x-y|\leq2^{j+1}\}}.$$
We turn to the second therm. By the fact $|z|\leq2^{j-l_\del(n)}$ and the support of $\phi_j$, we have
$|x-y|\approx|x-z-y|$ and $2^{j-2}\leq|x-y|\leq2^{j+2}$. By \eqref{e:8kb}, the second term is controlled by
$$\fr{2^{-j}|z|}{|x-z-y|^d}\chi_{\{2^{j-2}\leq|x-y|\leq2^{j+2}\}}\lc n^{-2}2^{-jd}\chi_{\{2^{j-2}\leq|x-y|\leq2^{j+2}\}}.$$
Combining the above two estimates and applying Minkowski's inequality, we get
\Bes
\begin{split}
\|T_jf-T_j^nf\|_1&\lc n^{-2}\int_{\R^d}\int_{2^{j-2}\leq|x-y|\leq 2^{j+2}}2^{-jd}|\Om(x-y)|\int_{\R^d}\eta_{j-l_\del(n)}(z)dz|f(y)|dydx\\
&\lc n^{-2}2^{-jd}\int_{\R^d}\int_{2^{j-2}\leq|x-y|\leq2^{j+2}}|\Om(x-y)|dx|f(y)|dy\\
&\lc n^{-2}\|\Om\|_{1}\|f\|_1.
\end{split}
\Ees
By Chebyshev's inequality, Minkowski's inequality and the estimates above, we get the bound
\begin{equation*}
\begin{split}
m&\Big(\Big\{x\in (E^*)^c:\sum\limits_{n\geq100}\Big|\sum\limits_{j}
T_jB_{j-n}(x)-T_j^nB_{j-n}(x)\Big|>\fr{\lam}{4}\Big\}\Big)\\
&\lc\lam^{-1}\|\Om\|_1\sum\limits_{n\geq100}\sum\limits_j\Big\|T_jB_{j-n}-T_j^nB_{j-n}\Big\|_1\\
&\lc\lam^{-1}\|\Om\|_1\sum_{n\geq100}n^{-2}\sum\limits_j\|B_{j-n}\|_1\lc\lam^{-1}\|\Om\|_1\|f\|_1,
\end{split}
\end{equation*}
which is the required estimate.
$\hfill{} \Box$

\subsection {Proof of Lemma \ref{l:2cur}}\quad

Denote the kernel of the operator $T_{j,\iota}^n$ by $$K_{j,\iota}^n(x,y):=\Om\chi_{D^{\iota}}(\fr{x-y}{|x-y|})K_j^n(x,y).$$
By \eqref{e:8kjnb}, we have
$$\bigg|\int_{\R^d} K_{j,\iota}^n(x,y)dy\bigg|\lc\int_{D^{\iota}}\int_{2^{j-2}}^{2^{j+2}}|\Om(\theta)|r^{d-1}2^{-jd}drd\theta
\lc\int_{D^\iota}|\Om(\theta)|d\theta.$$
Therefore by Chebyshev's inequality, the above inequality, the property (cz-v), we get
\begin{equation*}
\begin{split}
m\bigg(\bigg\{&x\in (E^*)^c:
\bigg|\sum_{n\geq100}\sum_{j\in \mathbb{Z}}T_{j,\iota}^nB_{j-n}(x)\bigg|>\frac{\lambda}{8}\bigg\}\bigg)\\&
\lc{\lam}^{-1}\bigg\|\sum_{n\geq100}\sum_{j\in \mathbb{Z}}T_{j,\iota}^nB_{j-n}\bigg\|_{1}\lc{\lam}^{-1}\sum_{n\geq100}\sum_{j}\|B_{j-n}\|_1\int_{D^{\iota}}|\Om(\theta)|d\theta\\&
\lc{\lam}^{-1}\|b\|_1\int_{\S^{d-1}}\card\big\{n\in \N:\, n\geq 100, 2^{\iota n}\leq |\Om(\tet)|/\|\Om\|_1\big\}|\Om(\tet)|d\tet\\
&\lc{\lam}^{-1}\|f\|_1\int_{\S^{d-1}}|\Om(\tet)|\big(1+\log^+(|\Om|/\|\Om\|_1)\big)d\tet\\
&\lc{\lam}^{-1}\C_\Om\|f\|_1.
\end{split}
\end{equation*}
$\hfill{} \Box$

\subsection {Proof of Lemma \ref{l:L^2}}\label{s:933}\quad

 We will use some ideas from \cite{See96} in the proof of Lemma \ref{l:L^2}.
As usually, we adopt the $TT^*$ method in the $L^2$ estimate. Moreover, we need to use some orthogonality
argument based on the following observation of the support of $\mathcal{F}(G_{n,v}T_j^{n,v})$: For a fixed $n\geq 100$, we have
\begin{equation}\label{e:obser}
\sup\limits_{\xi\neq0}\sum\limits_{v}|\Phi^2(2^{n\ga}\inn{e^n_v}{\xi/|\xi|})|\lc2^{n\ga(d-2)}.
\end{equation}
In fact, by homogeneous of $\Phi^2(2^{n\ga}\inn{e^n_v}{\xi/|\xi|})$, it suffices to take the supremum over the surface $\S^{d-1}$. For $|\xi|=1$ and $\xi\in\supp\ \Phi^2(2^{n\ga}\inn{e^n_v}{\xi/|\xi|})$, denote by $\xi^{\bot}$ the hyperplane perpendicular to $\xi$. Thus
\begin{equation}\label{e:e^n_v}
\text{dist}(e^n_v,\xi^\bot)\lc2^{-n\ga}.
\end{equation}
Since the mutual distance of $e^n_v$'s is bounded by $2^{-n\ga-4}$, there are at most $2^{n\ga(d-2)}$ vectors satisfy
(\ref{e:e^n_v}). We hence get (\ref{e:obser}).

By applying Plancherel's theorem and Cauchy-Schwarz inequality, we have
\begin{equation}\label{e:L^2key}
\begin{split}
\Big\|\sum\limits_{v}\sum\limits_jG_{n,v}T^{n,v}_jB_{j-n}\Big\|^2_2
&=\Big\|\sum\limits_{v}\Phi(2^{n\ga}\inn{e^n_v}{\xi/|\xi|})\mathcal{F}\Big(\sum\limits_jT^{n,v}_jB_{j-n}\Big)(\xi)\Big\|^2_2\\
&\lc2^{n\ga(d-2)} \Big\|\sum\limits_{v}\Big|\mathcal{F}\Big(\sum\limits_jT^{n,v}_jB_{j-n}\Big)\Big|^2\Big\|_1\\
&\lc2^{n\ga(d-2)} \sum\limits_{v}\Big\|\sum\limits_jT^{n,v}_jB_{j-n}\Big\|^2_2.\\
\end{split}
\end{equation}
Once it is showed that for a fixed $e^n_v$,
\begin{equation}\label{e:L^2}
\Big\|\sum\limits_jT^{n,v}_jB_{j-n}\Big\|^2_2\lc2^{-2n\ga(d-1)+2n\iota}\lam\|\Om\|_1\|f\|_1,
\end{equation}
then by $\card(\Theta_n)\lc2^{n\ga(d-1)}$, and apply (\ref{e:L^2key}) and (\ref{e:L^2}) we get
\begin{equation*}
\Big\|\sum\limits_{v}\sum\limits_jG_{n,v}T^{n,v}_jB_{j-n}\Big\|^2_2\lc2^{-n\ga(d-1)-n\ga+2n\iota}\text{card($\Theta_n$)}\lam\|\Om\|_1\|f\|_1
\lc2^{-n\ga+2n\iota}\lam\|\Om\|_1\|f\|_1,
\end{equation*}
which is just the desired bound of Lemma \ref{l:L^2}. Thus, to finish the proof of Lemma \ref{l:L^2}, it is enough to prove  (\ref{e:L^2}).
By applying $\|\Om\|_\infty\leq 2^{\iota n}\|\Om\|_1$, \eqref{e:8kjnb} and the support of $\Ga_v^n$, we have
\begin{equation*}
\begin{split}
|T_j^{n,v}B_{j-n}(x)|&\lc2^{\iota n}\|\Om\|_1\int_{\R^d}\Ga^n_v(x-y)|K_j^n(x,y)||B_{j-n}(y)|dy\\
&\lc2^{\iota n}\|\Om\|_1H_j^{n,v}*|B_{j-n}|(x),
\end{split}
\end{equation*}
where $H_j^{n,v}(x):=2^{-jd}\chi_{E^{n,v}_j}(x)$ and $\chi_{E^{n,v}_j}(x)$ is a characteristic function of the set
$$E_j^{n,v}:=\{x\in \R^d:|\inn{x}{e^n_v}|\leq 2^{j+2},|x-\inn{x}{e^n_v}e^n_v|\leq 2^{j+2-n\ga}\}.$$
For a fixed $e^n_v$, we write
\begin{equation}\label{e:L^2key2}
\begin{split}
\Big\|\sum\limits_jT^{n,v}_j&B_{j-n}\Big\|^2_2\leq 2^{2\iota n}\|\Om\|^2_1\sum\limits_j\int_{\R^d} H_j^{n,v}*H_j^{n,v}*|B_{j-n}|(x)\cdot|B_{j-n}(x)|dx\\
&+2^{2\iota n+1}\|\Om\|^2_1\sum\limits_j\sum\limits_{i=-\infty}^{j-1}\int_{\R^d} H_j^{n,v}*H_i^{n,v}*|B_{i-n}|(x)\cdot|B_{j-n}(x)|dx.
\end{split}
\end{equation}
 Observe that $\|H_i^{n,v}\|_1\lc2^{-id}m(E_i^{n,v})\lc2^{-n\ga(d-1)}$, therefore for any $i\leq j$,
$$H_j^{n,v}*H_i^{n,v}(x)\lc 2^{-n\ga(d-1)}2^{-jd}\chi_{\widetilde{E}^{n,v}_j},$$
where $\widetilde{E}^{n,v}_j=E^{n,v}_j+E^{n,v}_j$.
Hence for a fixed $j$, $n$, $e^n_v$ and $x$, we have
\begin{equation}\label{e:L^2con}
\begin{split}
&H_j^{n,v}*H_j^{n,v}*|B_{j-n}|(x)+2\sum\limits_{i=-\infty}^{j-1}H_j^{n,v}*H_i^{n,v}*|B_{i-n}|(x)\\
&\lc2^{-n\ga(d-1)}2^{-jd}\sum\limits_{i\leq j}\int_{x+\widetilde{E}^{n,v}_j}|B_{i-n}(y)|dy\\
&\lc2^{-n\ga(d-1)}2^{-jd}\sum\limits_{i\leq j}\sum\limits_{Q\in\mathfrak{Q}_
{i-n}\atop Q\cap\{x+\widetilde{E}^{n,v}_j\}\neq\emptyset}\int_{\R^d}|b_Q(y)|dy\\
&\lc2^{-n\ga(d-1)}2^{-jd}\sum\limits_{i\leq j}\sum\limits_{Q\in\mathfrak{Q}_
{i-n}\atop Q\cap\{x+\widetilde{E}^{n,v}_j\}\neq\emptyset}\fr{\lam}{\C_\Om}|Q|\\
&\lc2^{-n\ga(d-1)}2^{-jd}2^{jd-n\ga(d-1)}\fr{\lam}{\C_\Om}=\fr{\lam}{\C_\Om} 2^{-2n\ga(d-1)},
\end{split}
\end{equation}
where in third inequality above, we use $\int|b_Q(y)|dy\lc\lam|Q|/{\C_\Om}$ (see (cz-v) in Section \ref{s:92}) and
in the fourth inequality we use fact that the cubes in $\mathcal{Q}$ are disjoint (see (cz-iii) in Section \ref{s:92}). By (\ref{e:L^2key2}),
 (\ref{e:L^2con}) and $\sum\limits_j\|B_{j-n}\|_1\lc\|f\|_1$, we obtain
$$\Big\|\sum\limits_jT^{n,v}_jB_{j-n}\Big\|^2_2\lc\lam2^{-2n\ga(d-1)+2n\iota}\|\Om\|_1\sum\limits_{j}\|B_{j-n}\|_1\lc\lam2^{-2n\ga(d-1)+2n\iota}\|\Om\|_1\|f\|_1.$$
Hence, we complete the proof of Lemma \ref{l:L^2}.
$\hfill{} \Box$
\vskip1cm

\section{Proof of Lemma \ref{l:L^1}}\label{s:94}

To prove Lemma \ref{l:L^1}, we have to face with some oscillatory
integrals which come from $L_j^{n,v}$.
We first introduce Mihlin multiplier theorem, which can be found in \cite{Gra249}.

\begin{lemma}\label{l:9mihlin}
Let $m$ be a complex-value bounded function on $\R^n\setminus\{0\}$ that satisfies
$$|\pari_\xi^\alpha m(\xi)|\leq A|\xi|^{-|\alpha|}$$
for all multi indices $|\alpha|\leq [\fr{d}{2}]+1$, then the operator $T_m$ defined by
$$\widehat{T_mf}(\xi)=m(\xi)\hat{f}(\xi)$$
is a weak type {\rm(1,1)} bounded operator with bound $C_d(A+\|m\|_\infty)$.
\end{lemma}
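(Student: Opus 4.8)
\textbf{Proof proposal for Lemma \ref{l:9mihlin} (the Mihlin multiplier theorem).}

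The plan is to carry out the standard Calder\'on--Zygmund argument: first prove that $T_m$ is bounded on $L^2$, then show that its convolution kernel $\mathcal{F}^{-1}m$ satisfies a H\"ormander-type integral regularity condition, and finally invoke the Calder\'on--Zygmund decomposition to pass from $L^2$ plus the H\"ormander condition to weak type $(1,1)$. The $L^2$ bound is immediate from Plancherel: since $|m(\xi)|\le A$ pointwise (the $\alpha=0$ case of the hypothesis, together with $\|m\|_\infty$), we get $\|T_mf\|_2=\|m\hat f\|_2\le (A+\|m\|_\infty)\|f\|_2$, so the operator norm on $L^2$ is at most $C_d(A+\|m\|_\infty)$.

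The heart of the matter is the kernel estimate. Decompose $m$ via a Littlewood--Paley partition: pick a smooth radial $\psi$ supported in $\{1/2\le|\xi|\le2\}$ with $\sum_{k\in\Z}\psi(2^{-k}\xi)=1$ for $\xi\ne0$, set $m_k(\xi)=m(\xi)\psi(2^{-k}\xi)$, and let $K_k=\mathcal{F}^{-1}m_k$ so that the kernel is $K=\sum_k K_k$ (with convergence to be justified in the distributional/off-diagonal sense). The derivative hypothesis $|\pari_\xi^\alpha m(\xi)|\le A|\xi|^{-|\alpha|}$ for $|\alpha|\le[d/2]+1$, combined with the scaling of $\psi$, gives $\|\pari^\alpha m_k\|_\infty\lc A\,2^{-k|\alpha|}$ and $\mathrm{supp}\,m_k\subset\{|\xi|\approx2^k\}$. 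Integrating by parts, for $x\ne0$ one obtains both $|K_k(x)|\lc A\,2^{kd}$ (trivially) and $|K_k(x)|\lc A\,2^{kd}(2^k|x|)^{-N}$ for $N=[d/2]+1$; interpolating these and summing a geometric series yields $\int_{|x|>2|y|}|K(x-y)-K(x)|\,dx\lc A$ uniformly in $y$. (One can also get $|\nabla K_k(x)|\lc A\,2^{k(d+1)}(1+2^k|x|)^{-N}$ and run the mean value theorem version; either route works.) The only subtlety is ensuring the decompositions converge appropriately and that the formal manipulations are legitimate; this is handled by first working with $m$ truncated to an annulus $\{\eps\le|\xi|\le\eps^{-1}\}$, proving bounds independent of $\eps$, and passing to the limit. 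I expect this kernel estimate — specifically getting the constant to come out as $C_d A$ rather than something worse, and organizing the geometric summation cleanly — to be the main obstacle.

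With the $L^2$ bound of size $C_d(A+\|m\|_\infty)$ and the H\"ormander condition $\sup_y\int_{|x|>2|y|}|K(x-y)-K(x)|\,dx\le C_d A$ in hand, the weak type $(1,1)$ inequality with bound $C_d(A+\|m\|_\infty)$ follows from the classical Calder\'on--Zygmund machinery exactly as in the proof sketched in Section \ref{s:92}: given $f\in L^1$ and $\lam>0$, decompose $f=g+b$ at level $\lam$, handle $g$ by Chebyshev and the $L^2$ bound, and handle $b=\sum_Q b_Q$ by exploiting the mean-zero property of each $b_Q$ together with the kernel regularity estimate, after removing a dilated exceptional set whose measure is $\lc\lam^{-1}\|f\|_1$. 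This is entirely standard, so I would cite \cite{Gra249} or \cite{Ste93} for the details rather than reproducing them. $\hfill\Box$
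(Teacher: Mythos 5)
The paper does not actually prove this lemma; it is the classical Mihlin--H\"ormander multiplier theorem and the paper simply cites \cite{Gra249} for it. Your overall strategy -- $L^2$ boundedness via Plancherel, a H\"ormander kernel condition extracted from a Littlewood--Paley decomposition of $m$, and then the Calder\'on--Zygmund machinery -- is the right one, and both the $L^2$ step and the final CZ step are fine.

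The gap is in the kernel estimate. Write $N=[\fr{d}{2}]+1$. The pointwise bound $|K_k(x)|\lc A\,2^{kd}(2^k|x|)^{-N}$ that you obtain by $N$ integrations by parts is not strong enough to close the argument: since $N\le d$ for every $d\ge 2$ (with equality when $d=2$), the tail $\int_{|x|>R}(2^k|x|)^{-N}\,dx$ diverges, and $\sum_{k:\,2^k>1/|x|}2^{k(d-N)}$ diverges as well, so ``interpolating and summing a geometric series'' does not go through; the gradient/mean-value variant has the identical defect. This is precisely the point of the sharp threshold $[\fr{d}{2}]+1$: with so few derivatives one cannot get an integrable pointwise tail and must instead convert the $L^2$ information into an $L^1$ tail bound by Cauchy--Schwarz, exploiting the compact frequency support of $m_k$ on $\{|\xi|\approx2^k\}$. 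Concretely, $\int_{|x|>R}|K_k(x)|\,dx\le\bigl(\int_{|x|>R}|x|^{-2N}dx\bigr)^{1/2}\bigl(\int|x|^{2N}|K_k(x)|^2dx\bigr)^{1/2}\lc A\,(2^kR)^{d/2-N}$, which is a genuinely negative power of $2^kR$ because $N>d/2$; the short-range estimate $\int|K_k(\cdot-y)-K_k|\,dx\lc A\,2^k|y|$ is obtained by the same Cauchy--Schwarz device applied to $\nabla K_k$, and then $\sum_k\min\{A\,2^k|y|,\,A\,(2^k|y|)^{d/2-N}\}\lc A$ yields the H\"ormander condition with constant $C_dA$. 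With that replacement in the kernel step your proposal is complete.
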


Before stating the proof of Lemma \ref{l:L^1}, let us give some notations. We first introduce the Littlewood-Paley decomposition. Let $\psi$ be a radial
 $C^\infty$ function such that $\psi(\xi)=1$ for $|\xi|\leq 1$, $\psi(\xi)=0$ for $|\xi|\geq 2$
and $0\leq\psi(\xi)\leq1$ for all $\xi\in\R^d$. Define $\beta_k(\xi)=\psi(2^k\xi)-\psi(2^{k+1}\xi)$,
then $\beta_k$ is supported in $\{\xi:2^{-k-1}\leq|\xi|\leq 2^{-k+1}\}$. Define the convolution operators $V_k$ and $\Lam_k$
with Fourier multipliers $\psi(2^k\cdot)$ and $\beta_k$, respectively. That is,
$$\widehat{V_kf}(\xi)=\psi(2^k\xi)\hat{f}(\xi),\ \ \widehat{\Lam_kf}(\xi)=\beta_k(\xi)\hat{f}(\xi).$$
Then by the construction of $\beta_k$ and $\psi$, we have
$$I=\sum\limits_{k\in\mathbb{Z}}\Lam_k=V_m+\sum\limits_{k<m}\Lam_k \quad \text{for every}\ m\in\Z.$$
Set $A_{j,m}^{n,v}=V_mT_j^{n,v}$ and $D_{j,k}^{n,v}=(I-G_{n,v})\Lam_{k}T_{j}^{n,v}$. Write
\begin{equation*}
\begin{split}
L_j^{n,v}
&=(I-G_{n,v})V_mT_j^{n,v}+\sum\limits_{k<m}(I-G_{n,v})\Lam_{k}T_{j}^{n,v}\\
&=:(I-G_{n,v})A_{j,m}^{n,v}+\sum\limits_{k<m}D_{j,k}^{n,v},
\end{split}
\end{equation*}
where $m=j-[n\eps_0]$, $\eps_0>0$ will be chosen later. To prove Lemma \ref{l:L^1}, we split the measure in Lemma \ref{l:L^1} into two parts,
\begin{equation}\label{e:l^1}
\begin{split}
&\ \ \ \ m\Big(\Big\{x\in (E^*)^c: \Big|\sum_{n\geq100}\sum_v\sum\limits_j(I-G_{n,v})T_j^{n,v}B_{j-n}(x)\Big|>\lam\Big\}\Big)\\
&\leq m\Big(\Big\{x\in (E^*)^c:\Big|\sum_{n\geq100}\sum_v(I-G_{n,v})\big(\sum\limits_jA_{j,m}^{n,v}B_{j-n}\big)(x)\Big|
>\fr{\lam}{2}\Big\}\Big)\\
&\ \ \ \ +m\Big(\Big\{x\in (E^*)^c:\Big|\sum_{n\geq100}\sum_v\sum\limits_j\sum\limits_{k<m}D_{j,k}^{n,v}B_{j-n}(x)\Big|
>\fr{\lam}{2}\Big\}\Big)\\
&=:I+II.
\end{split}
\end{equation}

\subsection {First step: basic estimates of $I$ and $II$}\quad

Consider the term $I$. Notice that $\mathcal{F}[(I-G_{n,v})f](\xi)=(1-\Phi(2^{n\ga}\inn{e^n_v}{\xi/|\xi|}))\cdot\hat{f}(\xi)$.
It is easy to see that $(1-\Phi(2^{n\ga}\inn{e^n_v}{\xi/|\xi|}))$ is bounded and
$$|\pari_{\xi}^{\alpha}(1-\Phi(2^{n\ga}\inn{e^n_v}{\xi/|\xi|}))|\lc2^{n\ga([\fr{d}{2}]+1)}|\xi|^{-|\alpha|}$$
for all multi indices $|\alpha|\leq [\fr{d}{2}]+1$.
Then by Lemma \ref{l:9mihlin},  $I-G_{n,v}$ is of weak type (1,1) with bound $2^{n\ga([\fr{d}{2}]+1)}.$
By using the pigeonhole principle, one may get
\Be\label{e:9pid}
\{x:\sum\limits_{i}f_i(x)>\sum\limits_{i}\lam_i\}\subseteq\bigcup_{i}\{x:f_i(x)>\lam_i\}.
\Ee
Let $\mu>0$ to be chosen later. Then there exists $C_{\mu,d}$ such that $$\sum\limits_{n\geq100}\sum\limits_{e^n_v\in\Theta_n}C_{\mu,d}2^{-n\mu-n\ga(d-1)}=\fr{1}{2}.$$ Therefore
\begin{equation}\label{e:L^1_1}
\begin{split}
&\ \ \ \ m\Big(\Big\{x\in (E^*)^c:\Big|\sum_{n\geq100}\sum_v(I-G_{n,v})\Big(\sum\limits_jA_{j,m}^{n,v}B_{j-n}\Big)(x)\Big|
>\fr{\lam}{2}\Big\}\Big)\\
&= m\Big(\Big\{x\in (E^*)^c:\Big|\sum_{n\geq100}\sum_v(I-G_{n,v})\Big(\sum\limits_jA_{j,m}^{n,v}B_{j-n}\Big)(x)\Big|
>\sum_{n\geq100}\sum_vC_{\mu,d}2^{-n\mu-n\ga(d-1)}\lam\Big\}\Big)\\
&\leq\sum_{n\geq100}\sum_vm\Big(\Big\{x\in (E^*)^c:\Big|(I-G_{n,v})\Big(\sum\limits_jA_{j,m}^{n,v}B_{j-n}\Big)(x)\Big|
>C_{\mu,d}2^{-n\mu-n\ga(d-1)}\lam\Big\}\Big)\\
&\leq\sum_{n\geq100}\sum_j\sum_v \fr{1}{C_{\mu,d}\lam}2^{n\mu+n\ga(d-1)+n\ga([\fr{d}{2}]+1)}\|A_{j,m}^{n,v}B_{j-n}\|_1\\
&\leq\sum_{n\geq100}\sum_j\sum_v\sum_{l(Q)=2^{j-n}}
\fr{1}{C_{\mu,d}\lam}2^{n\mu+n\ga(d-1)+n\ga([\fr{d}{2}]+1)}\|A_{j,m}^{n,v}b_{Q}\|_1,
\end{split}
\end{equation}
where the second inequality follows from \eqref{e:9pid} and in the third inequality we use $I-G_{n,v}$ is weak type (1,1) bounded and Minkowski's inequality.

Next we turn to the term $II$. We use $L^1$ estimate directly
\begin{equation}\label{e:L^1_2}
\begin{split}
II\leq \fr{2}{\lam}\sum_{n\geq100}\sum_v\sum\limits_j\sum\limits_{k<m}\|D_{j,k}^{n,v}B_{j-n}\|_1
\leq \fr{2}{\lam}\sum_{n\geq100}\sum_v\sum\limits_j\sum\limits_{k<m}
\sum\limits_{l(Q)=2^{j-n}}\|D_{j,k}^{n,v}b_Q\|_1.
\end{split}
\end{equation}

Now the problem is reduced to estimate $\|A_{j,m}^{n,v}b_Q\|_1$ and $\|D_{j,k}^{n,v}b_Q\|_1$.
Recall in \eqref{e:Def T_j^{n,v}}, the kernel of operator  $T^{n,v}_j$ is
$$K^{n,v}_{j,y}(x):=\Om(x-y)\Ga_v^n(x-y)K_j^{n}(x,y).$$

Below we see $K^{n,v}_{j,y}(x)$ as a function of $x$ for a fixed $y\in Q$.
Thus, by Fubini's theorem,
$$A_{j,m}^{n,v}b_Q(x)=\int_QV_mK^{n,v}_{j,y}(x)\cdot b_Q(y)dy
=:\int_Q A_m(x,y)b_Q(y)dy$$
and
$$D_{j,k}^{n,v}b_Q(x)=\int_Q(I-G_{n,v})\Lambda_{k}K^{n,v}_{j,y}(x)\cdot b_Q(y)dy
=:\int_Q D_{k}(x,y)b_Q(y)dy.$$

\subsection {Estimate of $D_{k}$}\quad

\begin{lemma}\label{l:l^1_1}
For a fixed $y\in Q$, there exists $N>0$, such that for any $N_1\in\Z_+$
\Be\label{e:main}
\|D_{k}(\cdot,y)\|_1\leq Cn^{2\del^{-1}N_1}2^{-n\ga(d-1)+n\iota}2^{(-j+k)N_1+n\ga(N_1+2N)}\|\Om\|_1,
\Ee
where $C$ is a constant independent of $y$, but may depend on $N_1$, $N$ and $d$.
\end{lemma}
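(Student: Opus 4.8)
The plan is to write $D_{k}(\cdot,y)$ as a single smooth, compactly supported Fourier multiplier applied to the $x$-kernel of $T_j^{n,v}$, to pull out the decay $2^{(k-j)N_1}$ by integrating by parts in the \emph{radial} variable of $x-y$, and then to convert this Fourier-side smallness into an $L^1$ bound using that the output is localised at scale $2^j$. Since $I-G_{n,v}$ and $\Lam_k$ are Fourier multipliers, $D_k(\cdot,y)$ equals the multiplier $m_k(\xi)=\big(1-\Phi(2^{n\ga}\inn{e^n_v}{\xi/|\xi|})\big)\beta_k(\xi)$ applied to $x\mapsto K^{n,v}_{j,y}(x)=\Om(x-y)\Ga^n_v(x-y)K_j^n(x,y)$; here $m_k$ is smooth and supported in $\{2^{-k-1}\le|\xi|\le 2^{-k+1}\}\cap\{|\inn{e^n_v}{\xi/|\xi|}|>2^{-n\ga+1}\}$. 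Translating by $y$, set $\tilde K(w)=\Om(w)\Ga^n_v(w)K_j^n(y+w,y)$, which is smoothly supported in $\{2^{j-2}\le|w|\le 2^{j+2}\}$ and carried on the cone $\{|w/|w|-e^n_v|\le 2^{-n\ga}\}$, and let $\Psi_k$ be the convolution kernel of $m_k$, so $D_k(y+\cdot,y)=\Psi_k*\tilde K$. From $\|\Psi_k\|_1\lc 2^{n\ga(d-1)}$ and $\|\tilde K\|_1\lc\|\Om\|_\infty 2^{-n\ga(d-1)}\le 2^{\iota n}\|\Om\|_1 2^{-n\ga(d-1)}$ one gets the trivial bound $\|D_k(\cdot,y)\|_1\lc 2^{\iota n}\|\Om\|_1$; since the right side of \eqref{e:main} is $\gc 2^{\iota n}\|\Om\|_1$ whenever $j-k\le n\ga+l_\del(n)$ (granted $2N\ge d-1$, by absorbing an $N_1$-dependent constant), we may from now on assume $j-k>n\ga+l_\del(n)$.

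The core is an estimate for $\widehat{\tilde K}$ on $\supp m_k$. In polar coordinates $w=r\theta$, and since $\Om$ and $\Ga^n_v$ are homogeneous of degree $0$ and hence independent of $r$,
$$\widehat{\tilde K}(\xi)=\int_{|\theta-e^n_v|\le 2^{-n\ga}}\Om(\theta)\Ga^n_v(\theta)\int_r e^{-ir\inn{\theta}{\xi}}K_j^n(y+r\theta,y)\,r^{d-1}\,dr\,d\theta,$$
so the only $r$-dependent factor is $K_j^n(y+r\theta,y)$. From $K_j^n=\eta_{j-l_\del(n)}*_x(\phi_jK)$ and $|\phi_jK|\lc 2^{-jd}$ (only \eqref{e:8kb} is used) we get $|\partial_r^a K_j^n(y+r\theta,y)|\lc 2^{-jd}2^{(l_\del(n)-j)a}$; and on $\supp m_k$, uniformly over $|\theta-e^n_v|\le 2^{-n\ga}$, the $r$-independent quantity $\inn{\theta}{\xi}$ satisfies $|\inn{\theta}{\xi}|\ge|\inn{e^n_v}{\xi}|-|\theta-e^n_v|\,|\xi|\gc 2^{-n\ga}|\xi|\approx 2^{-n\ga-k}$. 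Integrating by parts $N_1$ times in $r$ (no boundary terms, by the cutoff in $K_j^n$) and using $\int_{|\theta-e^n_v|\le 2^{-n\ga}}|\Om(\theta)|\,d\theta\lc\|\Om\|_\infty 2^{-n\ga(d-1)}$ yields
$$\sup_{\xi\in\supp m_k}|\widehat{\tilde K}(\xi)|\lc_{N_1}2^{\iota n}\|\Om\|_1\,2^{-n\ga(d-1)}\big(2^{k+n\ga+l_\del(n)-j}\big)^{N_1},$$
and by the same computation (the extra amplitude factor $w^\alpha=r^{|\alpha|}\theta^\alpha$ costs only $2^{j|\alpha|}$ and does not affect the radial integration by parts) $|\partial_\xi^\alpha\widehat{\tilde K}(\xi)|\lc_{N_1,\alpha}2^{j|\alpha|}$ times the same bound, for every $\alpha$. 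Finally $2^{l_\del(n)N_1}\le 4^{N_1}n^{2\delta^{-1}N_1}$.

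To pass to $L^1$, write $h=D_k(y+\cdot,y)=\Psi_k*\tilde K$ and use the weighted inequality $\|h\|_1\lc 2^{jd}\|(1+2^{-2j}|\cdot|^2)^{L}h\|_\infty\lc 2^{jd}\sum_{|\alpha|\le 2L}2^{-j|\alpha|}\|\partial_\xi^\alpha(m_k\widehat{\tilde K})\|_1$ (valid for $2L>d$), together with $\|\partial_\xi^\alpha(m_k\widehat{\tilde K})\|_1\lc 2^{-kd}2^{j|\alpha|}\sup_{\supp m_k}|\widehat{\tilde K}|$ — here $2^{-kd}=|\supp m_k|$, and the terms in which a derivative hits $m_k$ are lower order because $2^{k+n\ga-j}<1$ in our regime. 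This gives $\|D_k(\cdot,y)\|_1\lc 2^{(j-k)d}\sup_{\supp m_k}|\widehat{\tilde K}|$ with $L=\lceil(d+1)/2\rceil$. Inserting the step-2 bound with $N_1+d$ in place of $N_1$ absorbs the factor $2^{(j-k)d}$; using $j-k>n\ga+l_\del(n)$, $n\ge100$ and $2N>d$, the leftover factors $2^{n\ga d}4^{N_1+d}n^{2\delta^{-1}d}$ are dominated by a constant times $n^{2\delta^{-1}N_1}2^{n\ga(N_1+2N)}$, which is \eqref{e:main}. Any integer $N>d/2$ works, and $C$ depends on $N_1$, $N$, $d$ and the fixed parameters $\ga,\del$.

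The main obstacle is the core estimate of Step 2. One must arrange the integration by parts so that (i) the radial derivative of the phase is bounded below \emph{uniformly} over $\supp\Ga^n_v\times\supp m_k$ — which is precisely where the cone width $2^{-n\ga}$ of $\Ga^n_v$ and the excision of the equatorial slab $\{|\inn{e^n_v}{\xi/|\xi|}|\le 2^{-n\ga+1}\}$ effected by $I-G_{n,v}$ must be used in tandem — and (ii) \emph{no} derivative ever falls on the rough factor $\Om$, which forces the polar-coordinate splitting and the use of the $0$-homogeneity of $\Om$ and $\Ga^n_v$, leaving $K_j^n$, smooth at scale $2^{j-l_\del(n)}\approx 2^j n^{-2/\delta}$, as the sole source of radial oscillation. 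The remainder is bookkeeping of the exponents of $2^{n\ga}$, $n^{2/\delta}$ and $2^{j-k}$, together with the verification that the trivial bound covers the complementary range $j-k\le n\ga+l_\del(n)$.
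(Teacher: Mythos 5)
Your proof is correct and follows essentially the same strategy as the paper: separate the rough factor via polar coordinates, integrate by parts $N_1$ times in the radial variable using the lower bound $|\inn{\theta}{\xi/|\xi|}|\gc 2^{-n\ga}$ (which combines the cone support of $\Ga^n_v$ with the angular cutoff produced by $I-G_{n,v}$), and then convert to an $L^1$ bound in $x$. The paper realizes the last step by a direct integration by parts in $\xi$ producing the weight $(1+2^{-2k}|x-y-r\theta|^2)^{-N}$, while you dualize via $\|h\|_1\lc 2^{jd}\sum_{|\alpha|\le 2L}2^{-j|\alpha|}\|\partial_\xi^\alpha\hat h\|_1$; this is cleaner notationally but picks up the extra $2^{(j-k)d}$ and forces both the case split $j-k\gtrless n\ga+l_\del(n)$ and the substitution $N_1\mapsto N_1+d$, neither of which the paper needs. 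One small inaccuracy: the asserted $\|\Psi_k\|_1\lc 2^{n\ga(d-1)}$ is not justified (a straightforward integration-by-parts estimate gives only $\|\Psi_k\|_1\lc 2^{2n\ga N}$ for $2N>d$), but this weaker bound still makes the trivial-bound reduction go through, since in the regime $j-k\le n\ga+l_\del(n)$ the right side of \eqref{e:main} is itself $\gtrsim 2^{2n\ga N-n\ga(d-1)+n\iota}\|\Om\|_1$.
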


\begin{proof}
Denote $h_{k,n,v}(\xi)=(1-\Phi(2^{n\ga}\inn{e^n_v}{\xi/|\xi|}))\beta_{k}(\xi).$ Write $D_k(x,y)$ as
\begin{equation*}
(I-G_{n,v})\Lam_{k}K^{n,v}_{j,y}(x)=\fr{1}{(2\pi)^{d}}
\int_{\R^d} e^{ix\cdot\xi}h_{k,n,v}(\xi)\int_{\R^d} e^{-i\xi\cdot\om}\Om(\om-y)\Ga_v^n(\om-y)K^{n}_j(\om,y) d\om d\xi.
\end{equation*}
In order to separate the rough kernel, we make a variable change $\om-y=r\theta$. By Fubini's theorem, the integral above can be written as
\begin{equation}\label{e:mainintegral}
\fr{1}{(2\pi)^{d}}\int_{\S^{d-1}}\Om(\theta)\Ga_v^n(\theta)
\bigg\{\int_{\R^d}\int_0^\infty
e^{i\inn{x-y-r\theta}{\xi}}h_{k,n,v}(\xi)K_j^n(y+r\tet,y)r^{d-1} drd\xi\bigg\}d\tet.
\end{equation}
By the support of $K_j^n(x,y)$ in \eqref{e:8kjnb}, we have $2^{j-2}\leq r\leq2^{j+2}$. Integrate by parts $N_1$ times with $r$. Hence the integral involving $r$ can
be rewritten as
$$\int_0^\infty e^{i\inn{x-y-r\theta}{\xi}}(i\inn{\theta}{\xi})^{-N_1}
\pari^{N_1}_r[K_j^n(y+r\tet,y)r^{d-1}]dr.$$

Since $\theta\in\supp\ \Ga^n_v$, then $|\theta-e^n_v|\leq 2^{-n\ga}$. By the support of $\Phi$,
we see $|\inn{e^n_v}{\xi/|\xi|}|\geq 2^{1-nr}$. Thus,
\begin{equation}\label{e:2ang}
|\inn{\theta}{\xi/|\xi|}|\geq|\inn{e^n_v}{\xi/|\xi|}|-|\inn{e^n_v-\theta}{\xi/|\xi|}|\geq2^{-n\ga}.
\end{equation}
After integrating by parts with $r$, integrate by parts with $\xi$, the integral in \eqref{e:mainintegral}
can be rewritten as
\begin{equation}\label{e:minte}
\begin{split}
\fr{1}{(2\pi)^d}\int_{\S^{d-1}}&\Om(\tet)\Ga_v^n(\tet)\int_{\R^d}e^{i\inn{x-y-r\tet}{\xi}}\int_0^\infty
\pari_r^{N_1}\Big(K_j^n(y+r\tet,y)r^{d-1}\Big)\times\\
&\fr{(I-2^{-2k}\Delta_\xi)^N}{(1+2^{-2k}|x-y-r\tet|^2)^N}\Big(h_{k,n,v}(\xi)(i\inn{\tet}{\xi})^{-N_1}\Big)drd\xi d\tet.
\end{split}
\end{equation}

In the following, we give an explicit estimate of the term in (\ref{e:minte}). By the definition of $K_j^n(x,y)$, we have
\Be\label{e:8parix}
\begin{split}
|\pari_x^\alp K_j^n(x,y)|&=2^{-(j-l_\del(n))|\alp|}\Big|\int(\pari_x^\alp\eta)_{j-l_\del(n)}(x-z)K_j(z,y)dz\Big|\\
&\leq2^{-(j-l_\del(n))|\alp|}\|K_j(\cdot,y)\|_\infty\|\pari_x^\alp\eta\|_1\\
&\lc2^{-(j-l_\del(n))|\alp|-jd},
\end{split}
\Ee
where the third inequality follows from \eqref{e:8kjnb}. By using product rule,
\begin{equation}\label{e:8parir}
\begin{split}
\Big|\pari^{N_1}_r\Big(K_j(y+r\tet,y)r^{d-1}\Big)\Big|&=\Big|\sum_{i=0}^{N_1}C_{N_1}^i\pari_r^i(K_j^n(y+r\tet,y))\pari_r^{N_1-i}(r^{d-1})\Big|\\
&=\Big|\sum\limits_{i=N_1-d+1}^{N_1}C_{N_1}^i\pari_r^i(K_j^n(y+r\tet,y))\pari_r^{N_1-i}(r^{d-1})\Big|.\\
\end{split}
\end{equation}
Applying \eqref{e:8parix} and $2^{j-2}\leq r\leq2^{j+2}$ , the above \eqref{e:8parir} is bounded by
\Be\label{e:8rbound}
\sum\limits_{i=N_1-d+1}^{N_1}C_{N_1}^i2^{-(j-l_\del(n))i-jd}2^{(j+2)(d-1-N_1+i)}\\
\leq C_{N_1}n^{2\del^{-1}N_1}2^{-(1+N_1)j}.
\Ee

Below we will show that
\begin{equation}\label{e:2D2}
\big|(I-2^{-2k}\Delta_\xi)^{N}[\inn{\theta}{\xi}^{-N_1}h_{k,n,v}(\xi)]\big|\leq C_{N_1}2^{(n\ga+k)N_1+2n\ga N}.
\end{equation}
We prove \eqref{e:2D2} when $N=0$ firstly. By \eqref{e:2ang}, we have
$$|(-i\inn{\theta}{\xi})^{-N_1}\cdot h_{k,n,v}(\xi)|\lc|\inn{\theta}{\xi}|^{-N_1}\lc2^{(n\ga+k)N_1}.$$
By using product rule,
\begin{equation*}
|\pari_{\xi_i}h_{k,n,v}(\xi)|=\big|-\pari_{\xi_i}[\Phi(2^{n\ga}\inn{e^n_v}{\xi/|\xi|})]
\cdot\beta_{k}(\xi)+\pari_{\xi_i}\beta_{k}(\xi)\cdot
(1-\Phi(2^{n\ga}\inn{e^n_v}{\xi/|\xi|}))\big|
\lc2^{n\ga+k}.
\end{equation*}
Therefore by induction, we have
$|\pari_{\xi}^{\alpha}h_{k,n,v}(\xi)|\lc2^{(n\ga+k)|\alpha|}$
for any  multi-indices $\alpha\in\Z^n_+$.
By using product rule again and (\ref{e:2ang}), we have
\begin{equation*}
\begin{split}
\big|\pari^2_{\xi_i}(\inn{\theta}{\xi})^{-N_1}h_{k,n,v}(\xi))\big|&=\big|{\inn{\theta}{\xi}}^{-N_1-2}\cdot N_1(N_1+1)\theta_i^2\cdot h_{k,n,v}\\
&\quad+2{\inn{\theta}{\xi}}^{-N_1-1}\cdot(-N_1)\cdot\theta_i\pari_{\xi_i}h_{k,n,v}(\xi)
+\inn{\theta}{\xi}^{-N_1}\pari_{\xi_i}^2h_{k,n,v}(\xi)\big|\\
&\leq C_{N_1}2^{(n\ga+k)(N_1+2)}.\\
\end{split}
\end{equation*}
Hence we conclude that
\begin{equation*}
2^{-2k}\big|\Delta_\xi[(\inn{\theta}{\xi})^{-N_1}h_{k,n,v}(\xi)]\big|\leq C_{N_1}2^{(n\ga+k)N_1+2n\ga}.
\end{equation*}
Proceeding by induction, we get \eqref{e:2D2}.

Now we choose $N=[d/2]+1$.
Since we need to get the $L^1$ estimate of (\ref{e:mainintegral}), by the support of $h_{k,n,v}$,
$$\int_{\supp(h_{k,n,v})}\int\Big(1+2^{-2k}|x-y-r\theta|^2\Big)^{-N}dxd\xi\leq C.$$
Integrating with $r$, we get a bound $2^j$. Note that we suppose that $\|\Om\|_\infty\leq 2^{n\iota}\|\Om\|_1$. Then integrating with $\theta$, we get a bound $2^{-n\ga(d-1)+n\iota}\|\Om\|_1$. Combining (\ref{e:8rbound}), (\ref{e:2D2}) and above estimates, $\|D_{k}(\cdot,y)\|_1$ is bounded by
\Bes
\begin{split}
&\ \ \ \ C_{N_1}n^{2\del^{-1}N_1}2^{-j(1+N_1)+(n\ga+k)N_1+2n\ga N+j-n\ga(d-1)+n\iota}\|\Om\|_1\\
&=C_{N_1}n^{2\del^{-1}N_1} 2^{-n\ga(d-1)+n\iota}2^{(-j+k)N_1+n\ga(N_1+2N)}\|\Om\|_1.\\
\end{split}
\Ees
Hence we complete the proof of Lemma \ref{l:l^1_1} with $N=[\fr{d}{2}]+1$.
\end{proof}

\subsection {Estimate of $A_{m}$.}\quad

Using the cancellation condition of $b_Q$ (see (cz-v) in Section \ref{s:92}), we have
$$A_{j,m}^{n,v}b_Q(x)=\int_Q(A_{m}(x,y)-A_m(x,y_0))b_Q(y)dy,$$
where $y_0$ is the center of $Q$. By changing to polar coordinates and applying Fubini's theorem, we can write $A_m(x,y)$ as
\begin{equation*}
\fr{1}{(2\pi)^{d}}\int_{\S^{d-1}}\Om(\theta)\Ga_v^n(\theta)
\bigg\{\int_0^\infty\int_{\R^d}
e^{i(\inn{x-y-r\theta}{\xi}}\psi(2^m\xi)K_j^n(y+r\tet,y)r^{d-1}drd\xi\bigg\} d\tet.
\end{equation*}
Integrating by part $N=[d/2]+1$ times with $\xi$ in the above integral, we have
\Bes
\begin{split}
\fr{1}{(2\pi)^{d}}\int_{\S^{d-1}}\Om(\theta)\Ga_v^n(\theta)
\bigg\{\int_0^\infty\int_{\R^d}
&e^{i\inn{x-y-r\theta}{\xi}}K_j^n(y+r\tet,y)r^{d-1}\\
&\times\fr{(I-2^{-2m}\Delta_\xi)^{N}\psi(2^m\xi)}
{\big(1+2^{-2m}|x-y-r\theta|^2\big)^{N}}d\xi dr
\bigg\}d\theta.
\end{split}
\Ees
Denote
$$A_{m}(x,y)-A_{m}(x,y_0)=: F_{m,1}(x,y)+F_{m,2}(x,y)+F_{m,3}(x,y),$$
where
\begin{equation*}
\begin{split}
F_{m,1}(x,y)=
\fr{1}{(2\pi)^{d}}\int_{\S^{d-1}}&\Om(\theta)\Ga_v^n(\theta)
\bigg\{\int_0^\infty\int_{\R^d}
\Big(e^{i\inn{-y}{\xi}}-e^{i\inn{-y_0}{\xi}}\Big)e^{i\inn{x-r\theta}{\xi}}\\
&\times K_j^n(y+r\tet,y)r^{d-1}\fr{(I-2^{-2m}\Delta_\xi)^{N}\psi(2^m\xi)}
{\big(1+2^{-2m}|x-y-r\theta|^2\big)^{N}}d\xi dr
\bigg\}d\theta,
\end{split}
\end{equation*}
\begin{equation*}
\begin{split}
F_{m,2}(x,y)=\fr{1}{(2\pi)^{d}}\int_{\S^{d-1}}\Om(\theta)\Ga_v^n(\theta)
\bigg\{\int_0^\infty\int_{\R^d}
&e^{i\inn{x-y_0-r\theta}{\xi}}\Big(K_j^n(y+r\tet,y)-K_j^n(y_0+r\tet,y_0)\Big)\\
&\times r^{d-1}\fr{(I-2^{-2m}\Delta_\xi)^{N}\psi(2^m\xi)}
{\big(1+2^{-2m}|x-y-r\theta|^2\big)^{N}}d\xi dr
\bigg\}d\theta,
\end{split}
\end{equation*}
and
\begin{equation*}
\begin{split}
F_{m,3}(x,y)&=\fr{1}{(2\pi)^{d}}\int_{\S^{d-1}}\Om(\theta)\Ga_v^n(\theta)\int_0^\infty\int_{\R^d}
e^{i\inn{x-y_0-r\theta}{\xi}}(I-2^{-2m}\Delta_\xi)^{N}\psi(2^m\xi)r^{d-1}\times\\
&K_j^n(y_0+r\tet,y_0)\Big(\fr{1}{\big(1+2^{-2m}|x-y-r\theta|^2\big)^{N}}-\fr{1}{\big(1+2^{-2m}|x-y_0-r\theta|^2\big)^{N}}\Big)d\xi drd\theta.
\end{split}
\end{equation*}
Hence
\begin{equation}\label{e:A_m}
\|A_{j,m}^{n,v}b_Q\|_1\leq \sup_{y\in Q}(\|F_{m,1}(\cdot,y)\|_1+\|F_{m,2}(\cdot,y)\|_1+\|F_{m,3}(\cdot,y)\|_1)\|b_Q\|_1.
\end{equation}
We have the following estimates of $F_{m,1}(x,y)$, $F_{m,2}(x,y)$, $F_{m,3}(x,y)$.
\begin{lemma}\label{l:l^1_2}
For a fixed $y\in Q$, we have
\Bes
\|F_{m,1}(\cdot,y)\|_1\leq C2^{-n\ga(d-1)+n\iota+j-n-m}\|\Om\|_1,
\Ees
where $C$ is independent of $y$.
\end{lemma}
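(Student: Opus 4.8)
The plan is to extract from the difference $e^{i\inn{-y}{\xi}}-e^{i\inn{-y_0}{\xi}}$ the single decaying factor that matters and to bound everything else crudely in $L^1$. Since $y,y_0\in Q$ with $l(Q)=2^{j-n}$, we have $|y-y_0|\lc 2^{j-n}$, while $\psi(2^m\xi)$, and hence $(I-2^{-2m}\Delta_\xi)^N\psi(2^m\xi)$, is supported in $\{|\xi|\lc 2^{-m}\}$; therefore on the region of integration
\[
|e^{i\inn{-y}{\xi}}-e^{i\inn{-y_0}{\xi}}|\leq|\inn{y-y_0}{\xi}|\lc|y-y_0|\,|\xi|\lc 2^{j-n-m}.
\]
This accounts for the factor $2^{j-n-m}$ in the claimed bound, so it remains to show that, after taking absolute values everywhere, the remaining integral has $L^1(dx)$-norm $\lc 2^{-n\ga(d-1)+n\iota}\|\Om\|_1$, uniformly in $y\in Q$.

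For this I would integrate successively in $x$, $\xi$, $r$, and $\theta$. First, by the chain rule $(I-2^{-2m}\Delta_\xi)^N[\psi(2^m\xi)]=\bigl[(I-\Delta)^N\psi\bigr](2^m\xi)$, a fixed $C_c^\infty$ function evaluated at $2^m\xi$; hence it is $O(1)$ and supported in $\{|\xi|\lc 2^{-m}\}$, so its $L^1(d\xi)$-norm is $\lc 2^{-md}$. Second, since $N=[d/2]+1$, so $2N>d$, the substitution $u=2^{-m}(x-y-r\theta)$ gives
\[
\int_{\R^d}\bigl(1+2^{-2m}|x-y-r\theta|^2\bigr)^{-N}\,dx\lc 2^{md},
\]
so the $x$- and $\xi$-integrals together contribute only a constant. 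Third, \eqref{e:8kjnb} gives $|K_j^n(y+r\theta,y)|\lc 2^{-jd}\chi_{\{2^{j-2}\leq r\leq 2^{j+2}\}}$, hence $\int_0^\infty r^{d-1}|K_j^n(y+r\theta,y)|\,dr\lc 1$. Finally, $\Ga_v^n$ is supported in the cap $\{\theta\in\S^{d-1}:|\theta-e^n_v|\lc 2^{-n\ga}\}$, of surface measure $\lc 2^{-n\ga(d-1)}$, and we are in the case $\|\Om\|_\infty\leq 2^{n\iota}\|\Om\|_1$; hence $\int_{\S^{d-1}}|\Om(\theta)\Ga_v^n(\theta)|\,d\theta\lc 2^{n\iota-n\ga(d-1)}\|\Om\|_1$. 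Multiplying the four contributions yields $\|F_{m,1}(\cdot,y)\|_1\lc 2^{-n\ga(d-1)+n\iota+j-n-m}\|\Om\|_1$, as desired.

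I do not expect a genuine obstacle here: the argument is an ordered chain of one-variable estimates. The only point requiring a little care is that the rescaled operator $2^{-2m}\Delta_\xi$ is normalized exactly so that $(I-2^{-2m}\Delta_\xi)^N$ acts boundedly on $\psi(2^m\xi)$, which is what makes the $x$- and $\xi$-integrals cancel, while the choice $N=[d/2]+1$ is precisely what guarantees that the $x$-integral of the denominator converges. With $m=j-[n\eps_0]$ one has $j-n-m=[n\eps_0]-n<0$, so $F_{m,1}$ is actually a gain; but this plays no role in the stated inequality and only enters later, when the estimates are summed over $n$, $j$, $v$ and $Q$.
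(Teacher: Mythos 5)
Your proof is correct and follows essentially the same route as the paper: you extract the factor $2^{j-n-m}$ from the phase difference using $|y-y_0|\lc 2^{j-n}$ and $|\xi|\lc 2^{-m}$, then bound the remaining integrals in $x$, $\xi$, $r$, $\theta$ in turn, with $N=[d/2]+1$ ensuring the $x$-integral converges and the measure of the cap $\supp\Ga_v^n$ together with $\|\Om\|_\infty\le 2^{n\iota}\|\Om\|_1$ giving the $2^{-n\ga(d-1)+n\iota}\|\Om\|_1$ factor. The only cosmetic difference is that the paper bounds $K_j^n(y+r\tet,y)r^{d-1}\lc 2^{-j}$ and then integrates $r$ to pick up a factor $2^j$, whereas you fold these two steps into one; the substance is identical.
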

\begin{proof}
We use the same method in proving Lemma \ref{l:l^1_1} but don't apply integrating by parts. Note that $y\in Q$ and $y_0$
is the center of $Q$, then $|y-y_0|\lc 2^{j-n}$. Thus
$$\Big|e^{i\inn{-y}{\xi}}-e^{i\inn{-y_0}{\xi}}\Big|\lc 2^{j-n-m}.$$
Since $2^{j-2}\leq r\leq2^{j+2}$ and \eqref{e:8kjnb}, we have
$|K_j^n(y+r\tet,y)r^{d-1}|\lc 2^{-j}.$
It is easy to see that
$$|(I-2^{-2m}\Delta_\xi)^N\psi(2^m\xi)|\leq C.$$
Since we need to get the $L^1$ estimate of $F_{m,1}(\cdot,y)$, by the support of $\psi(2^m\xi)$, we have
$$\int_{|\xi|\leq2^{1-m}}\int\Big(1+2^{-2m}|x-y-r\theta|^2\Big)^{-N}dxd\xi\leq C.$$
Integrating with $r$, we get a bound $2^j$. Note that we suppose that $\|\Om\|_\infty\leq 2^{n\iota}\|\Om\|_1$, so integrating with $\theta$, we get a bound $2^{-n\ga(d-1)+n\iota}\|\Om\|_1$.
Combining these bounds, we can get the required estimate for $F_{m,1}(\cdot,y)$.
\end{proof}
\begin{lemma}\label{l:8fm3}
For a fixed $y\in Q$, we have
\Bes
\|F_{m,3}(\cdot,y)\|_1\leq C2^{-n\ga(d-1)+n\iota+j-n-m}\|\Om\|_1,
\Ees
where $C$ is independent of $y$.
\end{lemma}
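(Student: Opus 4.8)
The plan is to mimic closely the argument for Lemma~\ref{l:l^1_2}, the only new feature being that the difference of kernels appearing in $F_{m,3}$ now comes from the factor $\bigl(1+2^{-2m}|x-y-r\theta|^2\bigr)^{-N}-\bigl(1+2^{-2m}|x-y_0-r\theta|^2\bigr)^{-N}$ rather than from the exponential. First I would record the elementary pointwise bound
\Be\label{e:fm3diff}
\Big|\fr{1}{(1+2^{-2m}|x-y-r\theta|^2)^{N}}-\fr{1}{(1+2^{-2m}|x-y_0-r\theta|^2)^{N}}\Big|\lc 2^{-m}|y-y_0|\sup_{|w|\le|y-y_0|}\fr{2^{-m}|x-y_0-w-r\theta|}{(1+2^{-2m}|x-y_0-w-r\theta|^2)^{N+1}},
\Ee
which follows from the mean value theorem applied to $t\mapsto(1+t^2)^{-N}$ together with the fact that $|\nabla_x(2^{-m}|x-y-r\theta|)|\le 2^{-m}$; here I use $|y-y_0|\lc 2^{j-n}$. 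The extra weight $\bigl(1+2^{-2m}|x-y_0-w-r\theta|^2\bigr)^{-(N+1)}$ is still integrable in $x$ over $\R^d$ with $N=[d/2]+1$, producing a constant after the $x$-integration (the shift by $w$ with $|w|\le 2^{j-n}$ is harmless, as one may absorb it and integrate).

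Next I would assemble the remaining factors exactly as in Lemma~\ref{l:l^1_2}: the term $(I-2^{-2m}\Delta_\xi)^N\psi(2^m\xi)$ is bounded by a constant and supported in $|\xi|\le 2^{1-m}$, so the $\xi$-integration contributes $2^{-md}$; the factor $K_j^n(y_0+r\theta,y_0)r^{d-1}$ is $\lc 2^{-j}$ by \eqref{e:8kjnb} and the constraint $2^{j-2}\le r\le 2^{j+2}$, and integrating in $r$ over this dyadic range contributes $2^{j}$; the bound $\|\Om\|_\infty\le 2^{n\iota}\|\Om\|_1$ together with $|\Ga_v^n|\le 1$ and the fact that $\supp\Ga_v^n$ has surface measure $\lc 2^{-n\ga(d-1)}$ contributes $2^{-n\ga(d-1)+n\iota}\|\Om\|_1$ after the $\theta$-integration. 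Multiplying these together with the factor $2^{-m}|y-y_0|\lc 2^{-m+j-n}$ coming from \eqref{e:fm3diff}, the powers of $2^{-m}$ and $2^{md}$ from the $\xi$-integration cancel against one $2^m$ implicit in $(1+2^{-2m}|\cdot|^2)^{-N}$ after the change of variables $\xi\mapsto 2^{-m}\xi$ (equivalently, one checks dimensional balance exactly as in the proof of Lemma~\ref{l:l^1_2}), leaving $\|F_{m,3}(\cdot,y)\|_1\lc 2^{-n\ga(d-1)+n\iota+j-n-m}\|\Om\|_1$, which is the claimed bound.

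I expect the only genuine obstacle to be the bookkeeping of the powers of $2$ in the $\xi$-integration: one must be careful that the $x$-integral of $\bigl(1+2^{-2m}|x-y_0-w-r\theta|^2\bigr)^{-(N+1)}$ produces $2^{md}$, that the $\xi$-integral over $|\xi|\le 2^{1-m}$ produces $2^{-md}$, and that the gain $2^{-m}$ from differentiating the weight is precisely the one advertised in the statement. Since these are exactly the same cancellations already verified in Lemma~\ref{l:l^1_2}, and the supplementary decay $\bigl(1+2^{-2m}|\cdot|^2\bigr)^{-(N+1)}$ only helps, no new difficulty arises; the proof is therefore a routine variant and I would write it in a few lines referring back to Lemma~\ref{l:l^1_2} for the parts that are identical. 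One should also note that the cross term where the $w$-shift interacts with the $r$-integration is controlled trivially because $|w|\le 2^{j-n}\ll 2^{j}$, so $|x-y_0-w-r\theta|\approx|x-y_0-r\theta|$ is not even needed — one simply integrates the shifted weight in $x$ directly.
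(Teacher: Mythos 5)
Your argument is essentially the same as the paper's: both apply the mean value theorem to $\Psi(y)=(1+2^{-2m}|x-y-r\theta|^2)^{-N}$ to extract the gain $2^{-m}|y-y_0|\lc2^{-m+j-n}$, then run the $\xi$-, $x$-, $r$- and $\theta$-integrations exactly as in Lemma \ref{l:l^1_2}. The only cosmetic difference is that the paper keeps the difference in the integral form $\int_0^1\langle y-y_0,\nabla\Psi(ty+(1-t)y_0)\rangle\,dt$, so the $x$-integral is immediate by translation invariance, whereas your $\sup_{|w|\leq|y-y_0|}$ formulation requires the additional (easy, and indeed noted by you) observation that $|y-y_0|\lc2^{j-n}$ is negligible against the decay scale $2^m=2^{j-[n\eps_0]}$; either route yields the stated bound.
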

\begin{proof}
For the term $F_{m,3}(\cdot,y)$, we can deal with it in the same way as $F_{m,1}(\cdot,y)$ once we
have the following observation
\begin{equation*}
\begin{split}
\Big|\Psi(y)-\Psi(y_0)\Big|&=\Big|\int_0^1\big\langle y-y_0,
\nabla \Psi(ty+(1-t)y_0)\big\rangle dt
\Big|\\
&\lc|y-y_0|2^{-m}\int_0^1\fr{N2^{-m}|x-(ty+(1-t)y_0)-r\theta|}{(1+2^{-2m}|x-(ty+(1-t)y_0)-r\theta|^2)^{N+1}}dt
\end{split}
\end{equation*}
where $\Psi(y)=(1+2^{-2m}|x-y-r\theta|^2)^{-N}$. Since $y\in Q$ and $y_0$ is the center of $Q$, we have $|y-y_0|\lc 2^{j-n}$. By $2^{j-2}\leq r\leq2^{j+2}$ and \eqref{e:8kjnb}, we have
$|K_j^n(y+r\tet,y)r^{d-1}|\lc 2^{-j}.$  It is easy to see
$$|(I-2^{-2m}\Delta_\xi)^N\psi(2^m\xi)|\leq C.$$
Since we need to get the $L^1$ estimate of $F_{m,3}(\cdot,y)$, by the support of $\psi(2^m\xi)$, we have
$$\int_{|\xi|\leq2^{1-m}}\int\fr{N2^{-m}|x-(ty+(1-t)y_0)-r\theta|}{(1+2^{-2m}|x-(ty+(1-t)y_0)-r\theta|^2)^{N+1}}dxd\xi\leq C.$$
Integrating with $r$, we get a bound $2^j$. Integrating with $t$, we get finite bound $1$. Note that we suppose that $\|\Om\|_\infty\leq 2^{n\iota}\|\Om\|_1$, therefore integrating with $\theta$, we get a bound $2^{-n\ga(d-1)+n\iota}\|\Om\|_1$.
Combining these bounds, we can get the required estimate for $F_{m,3}(\cdot,y)$.
\end{proof}

\begin{lemma}\label{l:L^1_3}
For a fixed $y\in Q$, we have
\Bes
\|F_{m,2}(\cdot,y)\|_1\leq C\Big(n^{2\del^{-1}}2^{-n}+2^{-n\del}\Big)2^{-n\ga(d-1)+n\iota}\|\Om\|_1,
\Ees
where $C$ is independent of $y$.
\end{lemma}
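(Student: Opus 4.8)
The plan is to reduce $\|F_{m,2}(\cdot,y)\|_1$ to a pointwise size estimate for the kernel difference $K_j^n(y+r\theta,y)-K_j^n(y_0+r\theta,y_0)$ and then to carry out exactly the same bookkeeping as in the proof of Lemma~\ref{l:l^1_2}. First I would split the difference according to which argument moves,
\begin{equation*}
K_j^n(y+r\theta,y)-K_j^n(y_0+r\theta,y_0)=\big[K_j^n(y+r\theta,y)-K_j^n(y_0+r\theta,y)\big]+\big[K_j^n(y_0+r\theta,y)-K_j^n(y_0+r\theta,y_0)\big].
\end{equation*}
Since $y,y_0\in Q$ with $l(Q)=2^{j-n}$ we have $|y-y_0|\lc 2^{j-n}$. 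The first bracket, where only the first variable moves, is handled by the mean value theorem together with the derivative bound \eqref{e:8parix} for $|\alpha|=1$: using $2^{l_\del(n)}\approx n^{2\del^{-1}}$ one gets $|\pari_x^\alpha K_j^n(x,y)|\lc n^{2\del^{-1}}2^{-j-jd}$ for $|\alpha|=1$, hence this bracket is $\lc n^{2\del^{-1}}2^{-n}2^{-jd}$.

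For the second bracket I would use the representation $K_j^n(x,y)=\int_{\R^d}\eta_{j-l_\del(n)}(x-z)\phi_j(z-y)K(z,y)\,dz$ and decompose the difference of integrands as
\begin{equation*}
\phi_j(z-y)K(z,y)-\phi_j(z-y_0)K(z,y_0)=\phi_j(z-y)\big(K(z,y)-K(z,y_0)\big)+\big(\phi_j(z-y)-\phi_j(z-y_0)\big)K(z,y_0).
\end{equation*}
On the relevant support one has $|z-y|\approx|z-y_0|\approx 2^j$, so $2|y-y_0|<|z-y|$ once $n\ge100$; hence the regularity condition \eqref{e:8kr} in the second variable applies and gives $|K(z,y)-K(z,y_0)|\lc|y-y_0|^\del 2^{-j(d+\del)}\lc 2^{-n\del}2^{-jd}$, while the smoothness of $\phi$ and the size bound \eqref{e:8kb} give $|\phi_j(z-y)-\phi_j(z-y_0)|\,|K(z,y_0)|\lc 2^{-j}|y-y_0|\cdot 2^{-jd}\lc 2^{-n}2^{-jd}$. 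Integrating in $z$ against $\eta_{j-l_\del(n)}$, whose $L^1$ norm is $1$, and using $\del\le1$ yields $|K_j^n(y_0+r\theta,y)-K_j^n(y_0+r\theta,y_0)|\lc 2^{-n\del}2^{-jd}$. Altogether
\begin{equation*}
\big|K_j^n(y+r\theta,y)-K_j^n(y_0+r\theta,y_0)\big|\lc\big(n^{2\del^{-1}}2^{-n}+2^{-n\del}\big)2^{-jd}.
\end{equation*}

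With this pointwise bound I would plug into the defining integral for $F_{m,2}(x,y)$ and estimate it term by term as in Lemma~\ref{l:l^1_2}: bound $|(I-2^{-2m}\Delta_\xi)^{N}\psi(2^m\xi)|\le C$ with $N=[d/2]+1$, use $r^{d-1}\approx 2^{j(d-1)}$ on the support of $K_j^n$, and control the $x$- and $\xi$-integrals by $\int_{|\xi|\le2^{1-m}}\int_{\R^d}(1+2^{-2m}|x-y-r\theta|^2)^{-N}\,dx\,d\xi\le C$ (the mismatch between $x-y_0-r\theta$ in the phase and $x-y-r\theta$ in the denominator is irrelevant since only absolute values are used). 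The $r$-integration over $[2^{j-2},2^{j+2}]$ contributes $2^j$, cancelling the surviving $2^{-j}$ from $|K_j^n\,r^{d-1}|$, and the $\theta$-integration over $\supp\Ga_v^n$, of surface measure $\approx 2^{-n\ga(d-1)}$, together with $\|\Om\|_\infty\le2^{n\iota}\|\Om\|_1$ produces $2^{-n\ga(d-1)+n\iota}\|\Om\|_1$. Multiplying the factors gives precisely $\|F_{m,2}(\cdot,y)\|_1\lc\big(n^{2\del^{-1}}2^{-n}+2^{-n\del}\big)2^{-n\ga(d-1)+n\iota}\|\Om\|_1$.

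The main obstacle is the regularity estimate for $K_j^n$ in its second variable: because $K_j^n$ is the convolution (in the first slot) of the truncated kernel $\phi_j(z-y)K(z,y)$ against the bump $\eta_{j-l_\del(n)}$, one must keep careful track of the $\phi_j$-cutoff so that the support relation $2|y-y_0|<|z-y|$ is valid and hypothesis \eqref{e:8kr} may legitimately be invoked; once the two-term splitting above is in place the argument is routine and needs no oscillatory-integral analysis beyond what already appears in the proofs of Lemmas~\ref{l:l^1_1} and \ref{l:l^1_2}.
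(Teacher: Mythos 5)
Your proposal is correct and follows essentially the same route as the paper's proof: you split the kernel difference into the same two brackets (first argument moving, then second), bound the first via the mean value theorem together with the derivative bound \eqref{e:8parix} (equivalently, via smoothness of $\eta$), split the second into the same $P_1+P_2$ according to whether $\phi_j$ or $K$ varies, invoke \eqref{e:8kb} with smoothness of $\phi$ for $P_1$ and the regularity \eqref{e:8kr} for $P_2$, and then carry out identical bookkeeping for the $\xi$-, $x$-, $r$-, and $\theta$-integrals. Your observation that the mismatch between $x-y_0-r\theta$ in the phase and $x-y-r\theta$ in the denominator is harmless is a correct reading of a point the paper leaves implicit.
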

\begin{proof}
First, notice that $2^{j-2}\leq r\leq2^{j+2}$. Write $K_j^n(y+r\tet,y)-K_j^n(y_0+r\tet,y_0)$ as
$$\Big(K_j^n(y+r\tet,y)-K_j^n(y_0+r\tet,y)\Big)+\Big(K_j^n(y_0+r\tet,y)-K_j^n(y_0+r\tet,y_0)\Big).$$
Since $y\in Q$ and $y_0$ is the center of $Q$, we have $|y-y_0|\leq2^{j-n}$. Therefore by the mean value formula, Minkowski's inequality and \eqref{e:8kjnb}, we get
\Be\label{e:8amk1}
\begin{split}
&\ \ \ \ \Big|K_j^n(y+r\tet,y)-K_j^n(y_0+r\tet,y)\Big|\\
&=
\Big|\int_{\R^d}\Big(\eta_{j-l_\del(n)}(y+r\tet-z)-\eta_{j-l_{\del}(n)}{(y_0+\tet-z)}\Big)K_j(z,y)dz\Big|\\
&=\Big|\int_{\R^d}\Big(\int_0^1\inn{y-y_0}{\nabla(\eta_{j-l_\del(n)})(ty+(1-t)y_0+r\tet-z)}dt\Big)K_j(z,y)dz\Big|\\
&\leq|y-y_0|2^{-j+l_\del(n)}\sum_{i=1}^n\|\pari_{x_i}\eta\|_1\|K_j(\cdot,y)\|_\infty\\
&\lc n^{2\del^{-1}}2^{-n-jd}.
\end{split}
\Ee
We write
\Be\label{e:8amk2}
\begin{split}
&\Big|K_j^n(y_0+r\tet,y)-K_j^n(y_0+r\tet,y_0)\Big|\\
&=\Big|\int_{\R^d} \eta_{j-l_\del(n)}(y_0+r\tet-z)\Big(K_j(z,y)-K_j(z,y_0)\Big)dz\Big|\\
&\leq\Big|\int_{\R^d} \eta_{j-l_\del(n)}(y_0+r\tet-z)\Big(\phi_j(z-y)-\phi_j(z-y_0)\Big)K(z,y)dz\Big|\\
&\ \ +\Big|\int_{\R^d} \eta_{j-l_\del(n)}(y_0+r\tet-z)\Big(K(z,y)-K(z,y_0)\Big)\phi_j(z-y_0)dz\Big|\\
&=:P_1+P_2.
\end{split}
\Ee
Consider $P_1$ firstly. Using the fact $|y-y_0|\lc2^{j-n}$ and the support of $\phi$, we have $2^{j-2}\leq|z-y|\leq2^{j+2}$. Applying the mean value formula, we get
$$P_1\leq |y-y_0|2^{-j}\|K(\cdot,y)\|_\infty\|\eta\|_1\lc 2^{-n-jd}.$$

For the term $P_2$, by $|y-y_0|<2^{j-n}$ and $2^{j-1}\leq|z-y_0|\leq2^{j+1}$, we have $2|y-y_0|\leq|z-y_0|$. By the regularity condition \eqref{e:8kr}, we have
$$P_2\leq C\int_{2^{j-2}\leq|z-y_0|\leq2^{j+2}}\eta_{j-l_\del(n)}(y_0+r\tet-z)\fr{|y-y_0|^\del}{|z-y_0|^{d+\del}}dz\lc2^{-n\del-jd}.$$

Combining the estimates of $P_1$ and $P_2$, we conclude that \eqref{e:8amk2} is controlled by $2^{-n\del-jd}$. Now we come back to estimate the $L^1(\R^d)$ norm of $F_{m,2}(\cdot,y)$.
It is easy to check
$$|(I-2^{-2m}\Delta_\xi)^N\psi(2^m\xi)|\leq C.$$
Since we need to get the $L^1$ estimate of $F_{m,2}(\cdot,y)$, by the support of $\psi(2^m\xi)$, we have
$$\int_{|\xi|\leq2^{1-m}}\int\Big(1+2^{-2m}|x-y-r\theta|^2\Big)^{-N}dxd\xi\leq C.$$
Integrating with $r$, we get
$$\int_{2^{j-2}}^{2^{j+2}}r^{d-1}dr\approx 2^{jd}.$$
Integrating with $\theta$, we get a bound $2^{-n\ga(d-1)+n\iota}\|\Om\|_1$. Combining with the estimates in \eqref{e:8amk1} and \eqref{e:8amk2}, the $L^1$ norm of $F_{m,2}(\cdot,y)$ is bounded by
$$\Big(n^{2\del^{-1}}2^{-n}+2^{-n\del}\Big)2^{-n\ga(d-1)+n\iota}\|\Om\|_1,$$
which is the required bound.
\end{proof}

\subsection{Proof of Lemma \ref{l:L^1}}\quad

Let us come back to the proof of Lemma \ref{l:L^1}, it is sufficient to consider $I$ and $II$ in (\ref{e:l^1}).
By (\ref{e:L^1_1}), (\ref{e:L^1_2}) and (\ref{e:A_m}), we have
\begin{equation*}
\begin{split}
I+II&\leq \fr{2}{\lam}\sum_{n\geq100}\sum_j\sum_v\sum_{l(Q)=2^{j-n}}
\Big[C_{\mu,d}^{-1}2^{n\mu+n\ga(d-1)+n\ga([\fr{d}{2}]+1)}
\|A_{j,m}^{n,v}b_{Q}\|_1+
\sum_{k<m}\|D_{j,k}^{n,v}b_Q\|_1\Big]\\
&\leq \fr{2}{\lam}\sum_{n\geq100}\sum_j\sum_v\sum_{l(Q)=2^{j-n}}\sup_{y\in Q}
\Big[C_{\mu,d}^{-1}2^{n\mu+n\ga(d-1)+n\ga([\fr{d}{2}]+1)}\Big(\|F_{m,1}(\cdot,y)\|_1\\
&\quad+\|F_{m,2}(\cdot,y)\|_1+\|F_{m,3}(\cdot,y)\|_1\Big) +\sum_{k<m}\|D_{k}(\cdot,y)\|_1\Big]\|b_{Q}\|_1.
\end{split}
\end{equation*}

Notice $m=j-[n\eps_0]$ and $\card(\Theta_n)\lc2^{n\ga(d-1)}$. Now applying Lemma \ref{l:l^1_1} with $N=[\fr{d}{2}]+1$, then Lemma \ref{l:l^1_2}, Lemma \ref{l:8fm3}, Lemma \ref{l:L^1_3} and the fact $[n\eps_0]\leq n\eps_0<[n\eps_0]+1$ imply
\begin{equation*}
\begin{split}
I+II\lc{\lam}^{-1}\sum_{n\geq100}\sum_j\sum_{l(Q)=2^{j-n}}\|b_Q\|_1\|\Om\|_1
\big[C_{\mu,d}^{-1}(2^{s_1n}+
n^{2\del^{-1}}2^{s_2n}+2^{s_3n})+n^{2\del^{-1}N_1}2^{s_4n}\big],
\end{split}
\end{equation*}
where
\begin{equation*}
\begin{split}
s_1&=\mu+\ga(d-1)+\ga\big([\fr{d}{2}]+1\big)-1+\eps_0+\iota,\\
s_2&=\mu+\ga(d-1)+\ga\big([\fr{d}{2}]+1\big)-1+\iota,\\
s_3&=\mu+\ga(d-1)+\ga\big([\fr{d}{2}]+1\big)-\del+\iota,\\
s_4&=-\eps_0N_1+\ga N_1+2\big([\fr{d}{2}]+1\big)\ga+\iota.
\end{split}
\end{equation*}
Now we choose $0<\iota\ll\ga\ll\eps_0\ll1$, $0<\mu\ll\del$, $0<\ga\ll\del$, $0<\iota\ll\del$ and $N_1$ large enough
such that $$\max\{s_1,s_2,s_3,s_4\}<0.$$
Therefore $$I+II\lc\fr{\|\Om\|_1}{\lam}\|b\|_1\sum_{n\geq100}\big[C_{\mu,d}^{-1}(2^{s_1n}+
n^{2\del^{-1}}2^{s_2n}+2^{s_3n})+n^{2\del^{-1}N_1}2^{s_4n}\big]\lc\fr{\|\Om\|_1}{\lam}\|f\|_1.$$ Hence we finish the proof of Lemma \ref{l:L^1}, thus we prove Theorem \ref{t:9main}.$\hfill{} \Box$
\vskip1cm

\section{Applications of the criterion}\label{s:92l}

In this section, we will give some important and interesting applications of Theorem \ref{t:9main}. Notice the following well known embedding relations between some function spaces on $ \S^{d-1}$:
\begin{equation*}\label{including rel}
L^\infty(\S^{d-1})\subsetneq L^r(\S^{d-1})\,(1<r<\infty)\subsetneq L\log^+\!\!L(\S^{d-1})\subsetneq L^1(\S^{d-1}),
\end{equation*}
and $\|\Omega\|_{L\log^+L}\lc \|\Omega\|_{r}$ when $\Omega\in L^r(\S^{d-1})\,(1<r\le\infty).$
Thus, we may get the following corollary of Theorem \ref{t:9main}:

\begin{corollary}\label{coro of main}
Suppose $K$ satisfies \eqref{e:8kb} and \eqref{e:8kr}. Let $\Om$ satisfy \eqref{e:2Home} and $\Om\in L^r(\S^{d-1})$ for $1<r\le\infty$. In addition, suppose $\Om$ and $K$ satisfy some appropriate cancellation conditions such that $T_\Om f(x)$ in \eqref{e:8T} is well defined for $f\in C_c^\infty(\R^d)$ and maps $L^2(\R^{d})$ to itself with bound $\|\Om\|_{r}$. Then for any $\lam>0$, we have
$$\lam m(\{x\in\R^{d}:|T_\Om f(x)|>\lam\})\lc\C_{\Om,r}\|f\|_1$$
where $\mathcal{C}_{\Om,r}=\|\Om\|_{r}+\int_{\S^{d-1}}|\Om(\tet)|\big(1+\log^+({|\Om(\tet)|}/{\|\Om\|_{1}})\big)d\tet$.
\end{corollary}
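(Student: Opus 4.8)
The plan is to deduce Corollary~\ref{coro of main} directly from Theorem~\ref{t:9main} by exploiting the embedding relations quoted just above the statement. The point is that the hypotheses of the Corollary are a special case of the hypotheses of the Theorem, so almost nothing new has to be proved; the only genuine task is to check that the constant $\C_\Om$ appearing in Theorem~\ref{t:9main} is dominated by the constant $\C_{\Om,r}$ in the Corollary.

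First I would observe that if $\Om\in L^r(\S^{d-1})$ for some $1<r\le\infty$, then, since $\S^{d-1}$ has finite measure and $L\log^+L$ sits between $L^r$ and $L^1$, we have $\Om\in L\log^+L(\S^{d-1})$ with $\|\Om\|_{L\log^+L}\lc\|\Om\|_r$. (For $r<\infty$ this is H\"older's inequality applied to $|\Om|\log(2+|\Om|)$ after splitting $\{|\Om|\le 1\}$ and $\{|\Om|>1\}$, using that $\log(2+t)\lc_{r,\epsilon} t^{r-1}$ on $t\ge1$; for $r=\infty$ it is immediate.) Next, the other hypotheses of Theorem~\ref{t:9main}: the kernel conditions \eqref{e:8kb} and \eqref{e:8kr} are assumed verbatim in the Corollary; the cancellation condition making $T_\Om f$ well defined on $C_c^\infty$ is assumed; and the $L^2$ boundedness with bound $\|\Om\|_r$ implies, via $\|\Om\|_{L\log^+L}\lc\|\Om\|_r$, the $L^2$ bound $C\|\Om\|_{L\log^+L}$ required by Theorem~\ref{t:9main}. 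Hence Theorem~\ref{t:9main} applies and yields
\[
\lam\, m(\{x\in\R^d:|T_\Om f(x)|>\lam\})\lc \C_\Om\|f\|_1
\]
for all $\lam>0$, with $\C_\Om$ as in \eqref{e:7constantom}.

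It then remains to compare $\C_\Om$ with $\C_{\Om,r}$. Recall
$\C_\Om=\|\Om\|_{L\log^+L}+\int_{\S^{d-1}}|\Om(\tet)|\big(1+\log^+(|\Om(\tet)|/\|\Om\|_1)\big)d\tet$,
while $\C_{\Om,r}=\|\Om\|_r+\int_{\S^{d-1}}|\Om(\tet)|\big(1+\log^+(|\Om(\tet)|/\|\Om\|_1)\big)d\tet$.
Since the second (integral) terms coincide and $\|\Om\|_{L\log^+L}\lc\|\Om\|_r$ by the first step, we get $\C_\Om\lc\C_{\Om,r}$, so
$\lam\,m(\{x:|T_\Om f(x)|>\lam\})\lc\C_{\Om,r}\|f\|_1$, which is exactly the claim.

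There is no real obstacle here; the ``hard part'' is purely bookkeeping, namely making sure that the $L^2$-bound hypothesis of the Corollary (bound $\|\Om\|_r$) is strong enough to feed into Theorem~\ref{t:9main} (which wants bound $C\|\Om\|_{L\log^+L}$), and this is handled by the elementary inequality $\|\Om\|_{L\log^+L}\lc\|\Om\|_r$ together with the fact that the dependence of all constants on $\Om$ in Theorem~\ref{t:9main} is only through $\C_\Om$ and the stated $L^2$ bound. One should also note, for completeness, that these same embeddings show $\C_{\Om,r}<\infty$ whenever $\Om\in L^r$, so the right-hand side is finite. This gives the full proof of Corollary~\ref{coro of main}.
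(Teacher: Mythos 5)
There is a genuine gap in your argument, and it sits precisely at the step you describe as the ``hard part.'' You assert that ``the $L^2$ boundedness with bound $\|\Om\|_r$ implies, via $\|\Om\|_{L\log^+L}\lesssim\|\Om\|_r$, the $L^2$ bound $C\|\Om\|_{L\log^+L}$ required by Theorem~\ref{t:9main}.'' This goes in the wrong direction. Because $\|\Om\|_{L\log^+L}\lesssim\|\Om\|_r$, a bound $\|T_\Om\|_{L^2\to L^2}\lesssim\|\Om\|_r$ is \emph{weaker} (the right-hand side is larger) than the bound $\|T_\Om\|_{L^2\to L^2}\lesssim\|\Om\|_{L\log^+L}$ demanded by Theorem~\ref{t:9main}; one cannot pass from the weaker to the stronger. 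The ratio $\|\Om\|_r/\|\Om\|_{L\log^+L}$ is unbounded as $\Om$ ranges over $L^r(\S^{d-1})$, so there is no universal constant $C$ for which the implication holds, and the $L^2$ hypothesis enters the argument quantitatively: in the proof of Theorem~\ref{t:9main} the Calder\'on--Zygmund decomposition is performed at level $\lam/\C_\Om$, and the good-function estimate
$m(\{|T_\Om g|>\lam/2\})\lesssim\lam^{-2}\|T_\Om\|_{L^2\to L^2}^2\|g\|_2^2\lesssim\lam^{-1}\|T_\Om\|_{L^2\to L^2}^2\,\C_\Om^{-1}\|f\|_1$
only closes up to $\lam^{-1}\C_\Om\|f\|_1$ because $\|T_\Om\|_{L^2\to L^2}\lesssim\|\Om\|_{L\log^+L}\leq\C_\Om$. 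With only $\|T_\Om\|_{L^2\to L^2}\lesssim\|\Om\|_r$, this step requires $\|\Om\|_r\lesssim\C_\Om$, which is false in general. So Theorem~\ref{t:9main} cannot be applied as a black box; the hypothesis check genuinely fails.

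The correct deduction is that Corollary~\ref{coro of main} follows from the \emph{proof} of Theorem~\ref{t:9main} rather than from its statement. Replace $\C_\Om$ by $\C_{\Om,r}$ throughout: perform the Calder\'on--Zygmund decomposition at level $\lam/\C_{\Om,r}$, and in the good-function estimate use $\|T_\Om\|_{L^2\to L^2}\lesssim\|\Om\|_r\leq\C_{\Om,r}$, which again closes. The bad-function estimates --- Lemmas~\ref{l:app}, \ref{l:2cur}, \ref{l:L^2}, \ref{l:L^1} --- only invoke $\|\Om\|_1$, the truncated-size bound $\|\Om\|_\infty\leq 2^{\iota n}\|\Om\|_1$, and the integral $\int_{\S^{d-1}}|\Om|\bigl(1+\log^+(|\Om|/\|\Om\|_1)\bigr)$, all of which are dominated by $\C_{\Om,r}$; the only dependence on the CZ level is through (cz-v), and that inherits the new normalization harmlessly. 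Your embedding $\|\Om\|_{L\log^+L}\lesssim\|\Om\|_r$ is needed not to compare $L^2$ bounds but merely to guarantee $\Om\in L\log^+L$ so that the machinery of Section~\ref{s:93}--\ref{s:94} is applicable, and your final comparison $\C_\Om\lesssim\C_{\Om,r}$ is correct but moot once one argues from the proof with $\C_{\Om,r}$ installed from the start.
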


Obviously, the weak type (1,1) bounds of rough singular integral $T$ given in Theorem B are immediate consequences of applying Theorem \ref{t:9main}. In fact, it is easy to see that $$K(x,y)=\fr{1}{|x-y|^d}$$ in the kernel of the singular integral $T$ defined in \eqref{e:9Tsin} satisfies \eqref{e:8kb} and \eqref{e:8kr} with $\del=1$.

In the following we give some applications of Theorem \ref{t:9main} and Corollary \ref{coro of main} involving Calder\'on commutator and its generalizations, which arises naturally in the studies of the Cauchy integral on Lipschitz curve and differential equations with non-smooth coefficients, see \cite{Cal78}, \cite{Fef74}, \cite{MC97} and \cite{MS13} for the background and applications of Calder\'on commutator.

\subsection{Calder\'on commutator}\quad

Recall Calde\'on commutator defined in \eqref{TA},
\Bes
T_{\Om,A} f(x)=\pv\int_{\R^d}\fr{\Om(x-y)}{|x-y|^d}\cdot\fr{A(x)-A(y)}{|x-y|}\cdot f(y)dy,
\Ees
As a first application of Theorem \ref{t:9main}, we get the weak type (1,1) boundedness of Calder\'on commutator $T_{\Om,A}$.
\begin{theorem}\label{t:9main_1}
Suppose $\Om\in L\log^+\!\!L(\S^{d-1})$ satisfying \eqref{e:2Home} and \eqref{e:km} and $A\in Lip(\R^d)$. Then for any $\lam>0$, we have
\begin{equation*}
m(\{x\in\R^d:|T_{\Om,A}f(x)|>\lam\})\lc{\lam}^{-1}\C_\Om\|\nabla A\|_\infty\|f\|_1.
\end{equation*}
\end{theorem}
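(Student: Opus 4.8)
The plan is to deduce the weak type $(1,1)$ bound for the Calder\'on commutator $T_{\Om,A}$ directly from Theorem \ref{t:9main} by casting $T_{\Om,A}$ in the form \eqref{e:8T}. First I would fix $A\in Lip(\R^d)$ and (after normalizing so that $\|\nabla A\|_\infty=1$, which is legitimate by homogeneity of the operator in $A$) set
$$K_A(x,y)=\fr{1}{|x-y|^d}\cdot\fr{A(x)-A(y)}{|x-y|},$$
so that $T_{\Om,A}f(x)=\pv\int_{\R^d}\Om(x-y)K_A(x,y)f(y)\,dy$ is exactly of the shape \eqref{e:8T}. The size bound \eqref{e:8kb} is immediate, since $|A(x)-A(y)|\le\|\nabla A\|_\infty|x-y|$ gives $|K_A(x,y)|\le C|x-y|^{-d}$. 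The regularity conditions \eqref{e:8kr} are verified by splitting the difference $K_A(x_1,y)-K_A(x_2,y)$ (and symmetrically in $y$) into the piece where the homogeneous factor $|x-y|^{-d-1}$ varies and the piece where the Lipschitz difference $A(x)-A(y)$ varies: the first piece is controlled using the standard mean value estimate for $|x-y|^{-d-1}$ when $|x_1-y|>2|x_1-x_2|$, and the second piece using $|(A(x_1)-A(y))-(A(x_2)-A(y))|=|A(x_1)-A(x_2)|\le\|\nabla A\|_\infty|x_1-x_2|$ together with $|x_1-y|\approx|x_2-y|$. This yields \eqref{e:8kr} with $\del=1$ and constant $C\|\nabla A\|_\infty=C$.

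Next I must check the remaining hypotheses of Theorem \ref{t:9main}: that $T_{\Om,A}f$ is well defined on $C_c^\infty(\R^d)$ and extends to a bounded operator on $L^2(\R^d)$ with bound $C\|\Om\|_{L\log^+L}$. Here the cancellation condition \eqref{e:km} enters. This $L^2$ bound is precisely the content of Theorem C(ii) (with the quantitative dependence $\|\Om\|_{L\log^+L}$, equivalently $\|\Om\|_{L\log^+L}\|\nabla A\|_\infty$ after undoing the normalization), together with the decomposition of $\Om$ into odd and even parts — the even part is handled by Theorem C(i) and the observation that an even $\Om$ contributes $C\|\Om\|_1\|\nabla A\|_\infty\le C\|\Om\|_{L\log^+L}\|\nabla A\|_\infty$, while the odd part with the moment condition \eqref{e:km} is handled by Theorem C(ii). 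Once this $L^2$ boundedness is in hand with the correct constant, Theorem \ref{t:9main} applies verbatim and delivers
$$\lam\, m(\{x\in\R^d:|T_{\Om,A}f(x)|>\lam\})\lc\C_\Om\|f\|_1,$$
and reinstating $\|\nabla A\|_\infty$ gives the stated bound $\lam^{-1}\C_\Om\|\nabla A\|_\infty\|f\|_1$.

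The main obstacle is not the weak $(1,1)$ machinery — that is entirely outsourced to Theorem \ref{t:9main} — but rather confirming that the $L^2$ boundedness of $T_{\Om,A}$ holds with the sharp constant $C\|\Om\|_{L\log^+L}\|\nabla A\|_\infty$ under hypothesis \eqref{e:km} alone. Theorem C as quoted is only a qualitative $L^p$ statement, so I would need either to track the constants through the Calder\'on–Coifman–Meyer argument (or the rotation-method / method-of-rotations reduction of the commutator to directional Hilbert transforms of Lipschitz type) to extract the $L\log^+L$ dependence, or to invoke a known quantitative version from the literature cited (\cite{CCal75}). A secondary, purely bookkeeping point is to make precise that the ``appropriate cancellation conditions'' alluded to in Theorem \ref{t:9main} are met: for $T_{\Om,A}$ the principal value exists for $f\in C_c^\infty$ because $A(x)-A(y)=O(|x-y|)$ absorbs one order of the singularity and the mean-zero cancellation \eqref{e:km} handles the remaining logarithmic divergence, so no further hypothesis on $\Om$ is needed beyond \eqref{e:2Home}, \eqref{e:km} and $\Om\in L\log^+L(\S^{d-1})$.
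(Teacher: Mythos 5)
Your proposal is correct and follows essentially the same path as the paper: verify that $K(x,y)=\frac{1}{|x-y|^d}\cdot\frac{A(x)-A(y)}{|x-y|}$ satisfies \eqref{e:8kb} and \eqref{e:8kr} with $\del=1$ (both via the triangle split into the homogeneous-factor variation and the Lipschitz-difference variation), invoke Theorem C for the $L^2$ bound, and then apply Theorem \ref{t:9main}. The one place where you are more careful than the paper is in flagging that Theorem C as quoted is purely qualitative: the paper simply asserts the $L^2$ bound with constant $\|\Om\|_{L\log^+L}\|\nabla A\|_\infty$ without further comment, whereas you correctly observe that extracting that quantitative dependence (and splitting $\Om$ into even and odd parts, since Theorem C's two clauses apply only to each parity separately) requires tracking constants through the cited arguments. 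This is a genuine, if minor, gap in the paper's exposition that your remark helps to patch.
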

\begin{proof}
Under the conditions in Theorem \ref{t:9main_1} , by Theorem C, we know that $T_\Om$ is bounded on $L^2(\R^d)$ with bound $\|\nabla A\|_\infty\|\Om\|_{L\log^+L}$. Hence, to prove the Theorem \ref{t:9main_1}, by Theorem \ref{t:9main}, it is enough to show that the kernel
$$K(x,y)=\fr{1}{|x-y|^d}\fr{A(x)-A(y)}{|x-y|}$$
satisfies \eqref{e:8kb} and \eqref{e:8kr}. Since $A\in Lip(\R^d)$, it is trivial to see that \eqref{e:8kb} holds. Suppose $|x_1-y|>2|x_1-x_2|$, then we have $|x_1-y|\approx|x_2-y|$. Applying the mean value formula, we have
\Bes
\begin{split}
|K(x_1,y)-K(x_2,y)|&\leq\Big|\fr{1}{|x_1-y|^{d+1}}-\fr{1}{|x_2-y|^{d+1}}\Big||A(x_1)-A(y)|+\fr{|A(x_1)-A(x_2)|}{|x_2-y|^{d+1}}\\
&\lc\|\nabla A\|_\infty\fr{|x_1-x_2|}{|x_1-y|^{d+1}}.
\end{split}
\Ees
Thus the first inequality in \eqref{e:8kr} is valid. The proof of the second inequality in \eqref{e:8kr} is similar. Hence we complete the proof.
\end{proof}

\subsection{Higher order Calder\'on commutator}\quad

In 1990, S. Hofmann \cite{Hof90} gave the $L^p\,(1<p<\infty)$ boundedness of the higher order Calder\'on commutator defined by
\begin{equation}\label{TAk}
T^k_{\Om,A} f(x)=\pv\int_{\R^d}\fr{\Om(x-y)}{|x-y|^d}\cdot\bigg(\fr{A(x)-A(y)}{|x-y|}\bigg)^k\cdot f(y)dy,
\end{equation}
where  $\Omega$ satisfies \eqref{e:2Home}, $A\in Lip(\R^d)$ and $k\ge1$.

\vskip0.2cm
\noindent
\textbf{Theorem D}\ {\rm (\cite{Hof90}).}\ {\it Suppose that $\Omega\in L^\infty(\S^{d-1})$
and satisfies the moment conditions
\begin{equation}\label{moment cond}
\int_{\S^{d-1}}\Om(\tet)\tet^\alpha d\tet=0,\quad \text{for all}\ \ \alpha\in \Z_+^d\ \ \text{with}\ \ |\alp|=k.
\end{equation}
Then the higher order Calder\'on commutator $T^k_{\Om,A}$ defined in \eqref{TAk} is a bounded operator on $L^p(\R^d)$ for $1<p<\infty$ with bound $\|\Omega\|_{\infty}\|\nabla A\|_{\infty}^k$.}
\vskip0.2cm

Applying Corollary \ref{coro of main}, we show that the higher order Calder\'on commutator $T^k_{\Om,A}$ is of weak type (1,1).
\begin{theorem}\label{t:main_1}
Suppose that $k\ge1$, $\Om\in L^\infty(\S^{d-1})$ satisfying \eqref{e:2Home} and \eqref{moment cond} and $A\in Lip(\R^d)$. Then for any $\lam>0$, we have
\begin{equation*}
m(\{x\in\R^d:|T^k_{\Om,A}f(x)|>\lam\})\lc{\lam}^{-1}\|\Omega\|_\infty\|\nabla A\|_{\infty}^k\|f\|_1.
\end{equation*}
\end{theorem}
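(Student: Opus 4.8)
The plan is to deduce Theorem \ref{t:main_1} from Corollary \ref{coro of main} in exactly the same spirit as the proof of Theorem \ref{t:9main_1}, namely by verifying that the kernel of $T^k_{\Om,A}$ fits into the framework \eqref{e:8T} with the size condition \eqref{e:8kb} and the regularity condition \eqref{e:8kr}, and that the requisite $L^2$ bound is already available. Write
$$K(x,y)=\fr{1}{|x-y|^d}\bigg(\fr{A(x)-A(y)}{|x-y|}\bigg)^k,$$
so that $T^k_{\Om,A}=T_\Om$ in the notation of \eqref{e:8T}. Since $A\in Lip(\R^d)$, we have $|A(x)-A(y)|\leq\|\nabla A\|_\infty|x-y|$, hence $|K(x,y)|\leq\|\nabla A\|_\infty^k|x-y|^{-d}$, which is \eqref{e:8kb}.

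For the regularity estimate \eqref{e:8kr}, I would argue as in Theorem \ref{t:9main_1} but account for the $k$-th power. Suppose $|x_1-y|>2|x_1-x_2|$, so that $|x_1-y|\approx|x_2-y|$. Decompose
$$K(x_1,y)-K(x_2,y)=\Big(\fr{1}{|x_1-y|^{d+k}}-\fr{1}{|x_2-y|^{d+k}}\Big)(A(x_1)-A(y))^k+\fr{(A(x_1)-A(y))^k-(A(x_2)-A(y))^k}{|x_2-y|^{d+k}}.$$
For the first term, the mean value theorem gives $\big||x_1-y|^{-(d+k)}-|x_2-y|^{-(d+k)}\big|\lc|x_1-x_2|\,|x_1-y|^{-(d+k+1)}$, and $|A(x_1)-A(y)|^k\leq\|\nabla A\|_\infty^k|x_1-y|^k$, so this term is $\lc\|\nabla A\|_\infty^k|x_1-x_2|\,|x_1-y|^{-(d+1)}$. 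For the second term, factor the difference of $k$-th powers as
$$(A(x_1)-A(y))^k-(A(x_2)-A(y))^k=(A(x_1)-A(x_2))\sum_{i=0}^{k-1}(A(x_1)-A(y))^i(A(x_2)-A(y))^{k-1-i},$$
and bound each factor using $|A(x_1)-A(x_2)|\leq\|\nabla A\|_\infty|x_1-x_2|$ and $|A(x_\ell)-A(y)|\leq\|\nabla A\|_\infty|x_\ell-y|\lc\|\nabla A\|_\infty|x_1-y|$; this gives a bound $\lc k\,\|\nabla A\|_\infty^k|x_1-x_2|\,|x_1-y|^{-(d+1)}$. Adding the two contributions yields the first inequality in \eqref{e:8kr} with $\del=1$; the second inequality in \eqref{e:8kr} follows by the symmetric argument with the roles of the first and second variables exchanged.

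Finally, Theorem D (Hofmann \cite{Hof90}) guarantees that under \eqref{e:2Home}, \eqref{moment cond} and $\Om\in L^\infty(\S^{d-1})$, the operator $T^k_{\Om,A}$ is bounded on $L^2(\R^d)$ with bound $\|\Om\|_\infty\|\nabla A\|_\infty^k$; rescaling $A$ by $\|\nabla A\|_\infty$ (equivalently, absorbing $\|\nabla A\|_\infty^k$ into $K$) puts us precisely in the situation of Corollary \ref{coro of main} with $r=\infty$ and $\del=1$. Applying that corollary produces
$$m(\{x\in\R^d:|T^k_{\Om,A}f(x)|>\lam\})\lc\lam^{-1}\C_{\Om,\infty}\|\nabla A\|_\infty^k\|f\|_1,$$
and since $\Om\in L^\infty(\S^{d-1})$ makes the logarithmic term in $\C_{\Om,\infty}$ harmless, $\C_{\Om,\infty}\lc\|\Om\|_\infty$, which is the claimed estimate. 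I do not anticipate a serious obstacle here: the only mildly delicate point is the bookkeeping in the difference-of-powers estimate for \eqref{e:8kr}, where one must be careful that all intermediate factors $|A(x_\ell)-A(y)|$ are controlled by $|x_1-y|$ rather than by the (possibly much smaller) $|x_1-x_2|$, but this is exactly where the hypothesis $|x_1-y|>2|x_1-x_2|$ is used.
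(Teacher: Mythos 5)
Your proof is correct and follows essentially the same route as the paper: verify the size bound \eqref{e:8kb} and the H\"older regularity \eqref{e:8kr} (with $\del=1$) for $K(x,y)=|x-y|^{-d}\big(\frac{A(x)-A(y)}{|x-y|}\big)^k$, then invoke Theorem D for the $L^2$ bound and Corollary \ref{coro of main} for the conclusion, using $\C_{\Om,\infty}\lc\|\Om\|_\infty$. The only cosmetic difference is in the splitting used to prove \eqref{e:8kr}: the paper writes $K(x_1,y)-K(x_2,y)$ as a difference of $|x-y|^{-d}$ factors times the bounded ratio $\big(\frac{A(x)-A(y)}{|x-y|}\big)^k$ plus a difference of those ratios, whereas you pull the full $|x-y|^{d+k}$ denominator out and difference the numerators $(A(x)-A(y))^k$ directly; both give the same $\|\nabla A\|_\infty^k|x_1-x_2|\,|x_1-y|^{-(d+1)}$ bound via the mean value theorem and the factorization of $a^k-b^k$, and your careful remark that the hypothesis $|x_1-y|>2|x_1-x_2|$ is what makes $|x_2-y|\approx|x_1-y|$ and controls the intermediate factors is exactly right.
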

\begin{proof} The proof is similar to the proof of Theorem \ref{t:9main_1}.
By Corollary \ref{coro of main} and Theorem D, it only needs to check that the kernel
$$K(x,y)=\fr{1}{|x-y|^d}\bigg(\fr{A(x)-A(y)}{|x-y|}\bigg)^k$$
satisfies \eqref{e:8kb} and \eqref{e:8kr}.
On one hand, the verification of \eqref{e:8kb}  is trivial since $A\in Lip(\R^d)$. On the other hand,  if  $|x_1-y|>2|x_1-x_2|$, we have $|x_1-y|\approx|x_2-y|$. Applying the mean value formula, we get
\Bes
\begin{split}
&|K(x_1,y)-K(x_2,y)|\\
&\leq\Big|\fr{1}{|x_1-y|^{d}}-\fr{1}{|x_2-y|^{d}}\bigg|\bigg|\fr{A(x_1)-A(y)}{|x_1-y|}\bigg|^k\\
&\quad+\frac 1{|x_2-y|^{d}}\bigg|\bigg(\fr{A(x_1)-A(y)}{|x_1-y|}\bigg)^k
-\bigg(\fr{A(x_2)-A(y)}{|x_2-y|}\bigg)^k\bigg|\\
&\lc\|\nabla A\|^k_\infty\fr{|x_1-x_2|}{|x_1-y|^{d+1}}.
\end{split}
\Ees
Thus the first inequality in \eqref{e:8kr} is valid. The proof of the second inequality in \eqref{e:8kr} is similar. Hence we complete the proof.
\end{proof}

\subsection{General Calder\'on commutator}\quad

In \cite{Cal77}, Calder\'on introduce the following more general commutator
\Be\label{e:9gc}
T_{\Om,F,A}f(x)=\pv\int_{\R^d}\fr{\Om(x-y)}{|x-y|^d}F\Big(\fr{A(x)-A(y)}{|x-y|}\Big)f(y)dy.
\Ee
It is well known that the study of this commutator is closely connected to the Cauchy integral on Lipschitz curves and the elliptic boundary value problem on non-smooth domain (see \cite{Cal78}, \cite{Cal77}, \cite{CCFJR78} and \cite{FJR78}). In \cite{CCFJR78}, by using the method of rotation, A. P. Calder\'on \emph{et al.}pointed that

\vskip0.2cm
\noindent
\textbf{Theorem E}\ {\rm (\cite{CCFJR78}).}\ {\it Suppose  $\Omega$, $F$ and $A$ satisfy the following conditions, then the commutator $T_{\Om,F,A}$ defined in \eqref{e:9gc} is bounded on $L^p(\R^d)$ for $1<p<\infty$:

{\rm (i)}\ $\Om(-\tet)=-\Om(\tet)$ for $\tet\in\S^{d-1}$ and $\Om\in L^1(\S^{d-1})$;

{\rm (ii)}\ $A\in Lip(\R^d)$ ;

{\rm (iii)}\ $F(t)=F(-t)$ for $t\in\R$ and $F(t)$ is real analytic in $\{|t|\leq\|\nabla A\|_\infty\}$.}
\vskip0.2cm

 \noindent  Using Theorem \ref{t:9main}, we may get a weak type (1,1) boundedness of $T_{\Om,F,A}$.
\begin{theorem}
Suppose $\Om$, $A$ and $F$ satisfy the conditions {\rm (i)$\sim$(iii)} in Theorem E. If  $\Om\in L\log^+L(\S^{d-1})$, then the general Calder\'on commutator $T_{\Om,F,A}$ is of weak type $(1,1)$. That is,  for any $\lam>0$
and $f\in L^1$,  $$m(\{x\in\R^d:|T_{\Om,F,A}f(x)|>\lam\})\lc{\lam}^{-1}\C_\Om\|f\|_1.$$
\end{theorem}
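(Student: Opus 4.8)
The plan is to deduce the theorem from Theorem \ref{t:9main}, exactly as in the proofs of Theorems \ref{t:9main_1} and \ref{t:main_1}. Written in the form \eqref{e:8T}, the operator $T_{\Om,F,A}=T_\Om$ has kernel
\[
K(x,y)=\fr{1}{|x-y|^d}\,F\Big(\fr{A(x)-A(y)}{|x-y|}\Big).
\]
Since $A\in Lip(\R^d)$, the quotient $(A(x)-A(y))/|x-y|$ always lies in the compact interval $I=[-\|\nabla A\|_\infty,\|\nabla A\|_\infty]$, on a neighbourhood of which $F$ is real analytic by hypothesis (iii) of Theorem E; hence $F$ and $F'$ are bounded on $I$. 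So three facts must be checked before Theorem \ref{t:9main} applies: the size bound \eqref{e:8kb}, the regularity bound \eqref{e:8kr} (I will use $\del=1$), and the $L^2(\R^d)$ boundedness of $T_{\Om,F,A}$ with operator norm $\lc\|\Om\|_{L\log^+L}$. (That $T_{\Om,F,A}f$ is well defined on $C_c^\infty(\R^d)$ as a principal value follows from the oddness of $\Om$.)

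The bound \eqref{e:8kb} is immediate from $|K(x,y)|\le\|F\|_{L^\infty(I)}|x-y|^{-d}$. For \eqref{e:8kr}, assume $|x_1-y|>2|x_1-x_2|$, so that $|x_1-y|\approx|x_2-y|$, and split $K(x_1,y)-K(x_2,y)$ into the part carrying the difference of $|x-y|^{-d}$ (times a value of $F$) and the part carrying the difference of the argument of $F$ (times $|x_2-y|^{-d}$). The first part is $\lc\|F\|_{L^\infty(I)}|x_1-x_2|/|x_1-y|^{d+1}$ by the mean value theorem for $t\mapsto t^{-d}$; for the second, the mean value theorem for $F$ on $I$ together with the elementary bound
\[
\Big|\fr{A(x_1)-A(y)}{|x_1-y|}-\fr{A(x_2)-A(y)}{|x_2-y|}\Big|\lc\|\nabla A\|_\infty\,\fr{|x_1-x_2|}{|x_1-y|}
\]
gives $\lc\|F'\|_{L^\infty(I)}\|\nabla A\|_\infty\,|x_1-x_2|/|x_1-y|^{d+1}$. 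This proves the first line of \eqref{e:8kr} with $\del=1$, and the second line, for $|x-y_1|>2|y_1-y_2|$, is proved in exactly the same way. These verifications are mechanical and mirror those in the proofs of Theorems \ref{t:9main_1} and \ref{t:main_1}.

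The only substantive point is the $L^2$ bound with the correct dependence on $\Om$, since Theorem E is stated only qualitatively. Here I would re-examine its proof by the method of rotations: using that $\Om$ is odd and in $L^1(\S^{d-1})$, that $F$ is even and real analytic on a neighbourhood of $I$, and that $A$ is Lipschitz, $T_{\Om,F,A}$ is realized as an average over $\theta\in\S^{d-1}$, weighted by $|\Om(\theta)|$, of one-dimensional operators of Calder\'on commutator type along the lines $\R\theta$. Expanding $F$ in its Taylor series (convergent on a disc of radius $>\|\nabla A\|_\infty$) and summing the one-dimensional Calder\'on commutators of each order $k$ against the Taylor coefficients, with the classical polynomial-in-$k$ bound for those commutators, each one-dimensional operator is bounded on $L^2(\R)$ uniformly in $\theta$, with a bound depending only on $F$ and $\|\nabla A\|_\infty$ (through the directional derivative, which is $\le\|\nabla A\|_\infty$). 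Minkowski's inequality then yields $\|T_{\Om,F,A}\|_{L^2\to L^2}\lc C(F,\|\nabla A\|_\infty)\,\|\Om\|_1\lc C(F,\|\nabla A\|_\infty)\,\|\Om\|_{L\log^+L}$. With \eqref{e:8kb}, \eqref{e:8kr} and this $L^2$ estimate in place, Theorem \ref{t:9main} gives $m(\{x\in\R^d:|T_{\Om,F,A}f(x)|>\lam\})\lc\lam^{-1}\C_\Om\|f\|_1$. The main obstacle is exactly this last step: extracting from the rotation argument of \cite{CCFJR78} the quantitative control of the $L^2$ norm by $\|\Om\|_1$, which requires keeping track of the uniformity in $\theta$ of the one-dimensional commutator estimates and of the summability of the $F$-expansion.
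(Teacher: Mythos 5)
Your proof is correct and takes essentially the same route as the paper: verify \eqref{e:8kb} and \eqref{e:8kr} (with $\del=1$) for $K(x,y)=|x-y|^{-d}F\big(\tfrac{A(x)-A(y)}{|x-y|}\big)$ via the mean-value theorem, then invoke Theorem \ref{t:9main} together with Theorem E. You are in fact more scrupulous than the paper on one point: the paper simply cites Theorem E for the $L^2$ hypothesis, whereas you correctly observe that Theorem E is stated only qualitatively and that the bound $C\|\Om\|_{L\log^+L}$ required by Theorem \ref{t:9main} must be extracted from the rotation argument (Minkowski over $\S^{d-1}$ against $|\Om(\theta)|$ of uniformly $L^2(\R)$-bounded one-dimensional Calder\'on commutators, summed over the Taylor series of $F$); your sketch of that extraction is the right way to close this small gap.
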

\begin{proof}
By Theorem \ref{t:9main} and Theorem E, it is enough to show that the kernel
$$K(x,y)=\fr{1}{|x-y|^d}F\Big(\fr{A(x)-A(y)}{|x-y|}\Big)$$
satisfies \eqref{e:8kb} and \eqref{e:8kr}. It is easy to check that $$|K(x,y)|\leq\fr{1}{|x-y|^d}\|F\|_{L^\infty(B(0,\|\nabla A\|_\infty))}.$$ Suppose $|x_1-y|>2|x_1-x_2|$, then $|x_1-y|\approx|x_2-y|$. Using the mean value formula and the fact $F$ is analytic in $\{|t|\leq\|\nabla A\|_\infty\}$, we have
\Bes
\begin{split}
|K(x_1,y)-K(x_2,y)|&\leq\Big|\fr{1}{|x_1-y|^d}-\fr{1}{|x_2-y|^d}\Big|\Big|F\Big(\fr{A(x_1)-A(y)}{|x_1-y|}\Big)\Big|\\
&\ \ \ \ +\fr{1}{|x_2-y|^d}\Big|F\Big(\fr{A(x_1)-A(y)}{|x_1-y|}\Big)-F\Big(\fr{A(x_2)-A(y)}{|x_2-y|}\Big)\Big|\\
&\lc\fr{|x_1-x_2|}{|x_1-y|^{d+1}}\Big(\|F\|_{L^\infty(B(0,\|\nabla A\|_\infty))}+\|\nabla A\|_\infty\|\nabla F\|_{L^\infty(B(0,\|\nabla A\|_\infty))}\Big).
\end{split}
\Ees
Thus the first inequality in \eqref{e:8kr} is valid. Similarly we can establish the second inequality in \eqref{e:8kr}. Therefore we complete the proof.
\end{proof}

\subsection{Calder\'on commutator of Bajsanski-Coifman type}\quad

In 1967, Bajsanski and Coifman \cite{BC67} introduced another kind of general Calder\'on commutator as follows. For a multi-indices $\alpha\in \Z_+^d$, set $A_{\alpha}(x)=\pari^{\alp}_xA(x)$ and
$$P_l(A,x,y)=A(x)-\sum\limits_{|\alp|<l}\fr{A_{\alp}(y)}{\alp!}(x-y)^\alp,$$
where $l\in\Bbb N$. Define the singular operator $T_{\Omega,A,l}$ as
\begin{equation}\label{e:8ht}
T_{\Omega,A,l}f(x)=\pv\int_{\R^d}\fr{\Om(x-y)}{|x-y|^d}\cdot\fr{P_l(A,x,y)}{|x-y|^l}\cdot
f(y)dy,
\end{equation}
where $\Omega$ satisfies \eqref{e:2Home} and \eqref{int condi}. Clearly, when $l=1$, the operator $T_{\Omega,A,l}$ is just Calder\'on commutator $T_{\Omega,A}$ defined in \eqref{TA}.

\vskip0.2cm
\noindent
\textbf{Theorem F}\ {\rm (\cite{BC67}).}\ {\it The commutator $T_{\Omega,A,l}$ defined in \eqref{e:8ht} is bounded on $L^p(\R^d)$ for $1<p<\infty$ if  $l\in\Bbb N$ and $\Omega, A$ satisfy the following conditions:

{\rm (i)}\ $\Om\in L\log^+\!\!L(\S^{d-1})$ and satisfies \eqref{e:2Home} and
\begin{equation}\label{e:km_1}
\int_{\S^{d-1}}\Om(\tet)\tet^\alpha d\tet=0,\quad \text{for all}\ \ \alpha\in \Z_+^d\ \ \text{with}\ \ |\alp|=l;
\end{equation}

{\rm (ii)}\ $A_\alpha\in L^\infty(\R^d)$ for $|\alpha|=l$.}

\vskip0.2cm

E. M. Stein pointed out that the operator  $T_{\Omega,A,l}$ is of weak type $(1,1)$ if $\Om\in Lip(\S^{d-1})$.

\vskip0.2cm
\noindent
\textbf{Theorem G}\ {\rm (E. M. Stein, see \cite[p.\,16]{BC67}).}\ {\it Suppose  $l\in\Bbb N$ and $\Omega, A$ satisfy the same conditions as Theorem F, but replacing $\Om\in L\log^+\!\!L(\S^{d-1})$ by $\Om\in Lip(\S^{d-1})$,  then $T_{\Omega,A,l}$ is of weak type $(1,1)$.}
\vskip0.2cm

Applying Theorem \ref{t:9main}, we may improve Theorem G essentially.

\begin{theorem}\label{t:main_3}
Let $l\geq 1$. Suppose $\Om\in L\log^+\!\!L(\S^{d-1})$ satisfying \eqref{e:2Home} and \eqref{e:km_1}. Let $ A_\alp\in L^\infty(\R^d)$ for every $|\alp|=l$. Then for any $\lam>0$, we have
\begin{equation*}
m(\{x\in\R^d:|T_{\Omega,A,l}f(x)|>\lam\})\lc{\lam}^{-1}\C_\Om\sum_{|\alp|=l}\|A_\alp\|_\infty\|f\|_1.
\end{equation*}
\end{theorem}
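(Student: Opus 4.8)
The plan is to follow exactly the pattern of the proofs of Theorems \ref{t:9main_1} and \ref{t:main_1}: verify the hypotheses of Theorem \ref{t:9main} for $T_{\Omega,A,l}$, whose kernel is $\Omega(x-y)K(x,y)$ with
$$K(x,y)=\frac{1}{|x-y|^{d}}\cdot\frac{P_l(A,x,y)}{|x-y|^{l}}=\frac{P_l(A,x,y)}{|x-y|^{d+l}}.$$
Since $A\mapsto P_l(A,x,y)$ is linear, $T_{\Omega,cA,l}=c\,T_{\Omega,A,l}$, so by homogeneity we may assume $\sum_{|\alpha|=l}\|A_\alpha\|_\infty=1$; Theorem F then provides the $L^2(\R^d)$ bound $C\|\Omega\|_{L\log^+L}$ required by Theorem \ref{t:9main} (this is where the cancellation condition \eqref{e:km_1} is used), and it remains only to check the size bound \eqref{e:8kb} and the regularity bounds \eqref{e:8kr} for $K$, which we will do with $\del=1$. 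At the end we undo the normalization to recover the factor $\sum_{|\alpha|=l}\|A_\alpha\|_\infty$ in the final estimate.

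The arithmetic core is a handful of estimates on the remainder $P_l(A,x,y)=A(x)-\sum_{|\alpha|<l}\frac{A_\alpha(y)}{\alpha!}(x-y)^\alpha$. The integral form of Taylor's formula gives
$$P_l(A,x,y)=l\sum_{|\alpha|=l}\frac{(x-y)^\alpha}{\alpha!}\int_0^1(1-t)^{l-1}A_\alpha\big(y+t(x-y)\big)\,dt,$$
so $|P_l(A,x,y)|\lc\sum_{|\alpha|=l}\|A_\alpha\|_\infty|x-y|^l$ and \eqref{e:8kb} is immediate. For the regularity one uses the elementary identities, with $e_j$ the $j$-th coordinate vector,
$$\partial_{x_j}P_l(A,x,y)=P_{l-1}(\partial_jA,x,y),\qquad \partial_{y_j}P_l(A,x,y)=-\sum_{|\alpha|=l-1}\frac{A_{\alpha+e_j}(y)}{\alpha!}(x-y)^\alpha,$$
the second of which rests on the complete cancellation of the lower-order terms after the product rule is applied. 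Since the derivatives $A_{\alpha+e_j}$ $(|\alpha|=l-1)$ and the order-$(l-1)$ derivatives of $\partial_jA$ are precisely order-$l$ derivatives of $A$, hence in $L^\infty$, applying the size estimate one level down yields $|\nabla_xP_l(A,x,y)|+|\nabla_yP_l(A,x,y)|\lc\sum_{|\alpha|=l}\|A_\alpha\|_\infty|x-y|^{l-1}$.

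Granting these, both inequalities of \eqref{e:8kr} follow verbatim as in Theorem \ref{t:9main_1}. If $|x_1-y|>2|x_1-x_2|$ then $|x_1-y|\approx|x_2-y|$ and the segment joining $x_1$ and $x_2$ lies in $\{|x-y|\approx|x_1-y|\}$; writing
$$K(x_1,y)-K(x_2,y)=\Big(\frac{1}{|x_1-y|^{d+l}}-\frac{1}{|x_2-y|^{d+l}}\Big)P_l(A,x_1,y)+\frac{P_l(A,x_1,y)-P_l(A,x_2,y)}{|x_2-y|^{d+l}},$$
estimating the first bracket by $C|x_1-x_2|\,|x_1-y|^{-d-l-1}$ and the difference of remainders by the mean value theorem together with the gradient bound above, we obtain $|K(x_1,y)-K(x_2,y)|\lc|x_1-x_2|\,|x_1-y|^{-d-1}$; the second inequality in \eqref{e:8kr} is identical, using the formula for $\partial_{y_j}P_l$. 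Theorem \ref{t:9main} then gives the asserted weak type $(1,1)$ bound. The only genuinely new work here is the pair of differentiation identities for $P_l$ — in particular verifying the cancellation of the lower-order terms in $\partial_{y_j}P_l$ — while the rest is the same routine kernel computation already carried out for the Calder\'on commutator.
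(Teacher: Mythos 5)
Your proposal is correct and follows essentially the same route as the paper: reduce to Theorem~\ref{t:9main} using the $L^2$ bound from Theorem~F, then verify \eqref{e:8kb} and \eqref{e:8kr} (with $\delta=1$) for $K(x,y)=P_l(A,x,y)/|x-y|^{d+l}$ by elementary manipulation of the Taylor remainder. The only cosmetic difference lies in the regularity check: you differentiate $P_l$ directly via the identities $\partial_{x_j}P_l(A,x,y)=P_{l-1}(\partial_jA,x,y)$ and $\partial_{y_j}P_l(A,x,y)=-\sum_{|\alpha|=l-1}\frac{A_{\alpha+e_j}(y)}{\alpha!}(x-y)^\alpha$ together with the one-level-down size bound, whereas the paper rewrites $P_l(A,x,y)=P_{l-1}(A,x,y)-\sum_{|\alpha|=l-1}\frac{A_\alpha(y)}{\alpha!}(x-y)^\alpha$ in integral-remainder form and exploits that $A_\alpha\in\Lip(\R^d)$ for $|\alpha|=l-1$; both give the same H\"ormander bound.
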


\begin{remark} When $l=1$, $T_{\Omega,A,1}$ equals to $T_{\Om,A}$ defined in \eqref{TA}. Thus, Theorem \ref{t:9main_1} is just the special case of  Theorem \ref{t:main_3} when $l=1$.
\end{remark}
\begin{proof}
By Theorem \ref{t:9main} and Theorem F, to prove Theorem \ref{t:main_3}, it suffices to show that the
kernel
$$K(x,y)=\fr{1}{|x-y|^d}\cdot\fr{P_l(A,x,y)}{|x-y|^l}$$
satisfies \eqref{e:8kb} and \eqref{e:8kr}. By the fact $A_\alp\in L^\infty(\R^d)$ for every $|\alp|=l$ and the following Taylor expansion
$$P_{l}(A,x,y)=l\sum\limits_{|\alp|=l}\fr{(x-y)^\alp}{\alp!}\int_0^1(1-s)^{l-1}
A_{\alp}(y+s(x-y))ds,$$
we conclude that $$|K(x,y)|\lc\sum_{|\alp|=l}\|A_\alpha\|_\infty\fr{1}{|x-y|^d}.$$

Choose $|x_1-y|>2|x_1-x_2|$. Then we have $|x_1-y|\approx|x_2-y|$. By using the Taylor expansion, we can write
\Bes
\begin{split}
P_{l}(A,x,y)&=P_{l-1}(A,x,y)-\sum\limits_{|\alp|=l-1}\fr{A_{\alp}(y)}{\alp!}(x-y)^\alp\\
&=(l-1)\sum\limits_{|\alp|=l-1}\fr{(x-y)^\alp}{\alp!}\int_0^1(1-s)^{l-2}
\Big(A_{\alp}(y+s(x-y))-A_{\alp}(y)\Big)ds.
\end{split}
\Ees
Note that for each $|\alp|=l-1$, $A_\alp\in Lip(\R^d)$. By the mean value formula, it is not difficult to see that
$$|K(x_1,y)-K(x_2,y)|\lc\sum_{|\alp|=l}\| A_\alp\|_\infty\fr{|x_1-x_2|}{|x_1-y|^{d+1}}.$$
The proof of the second inequality in \eqref{e:8kr} is similar. Hence \eqref{e:8kr} holds for $K(x,y)$. Thus we finish the proof.
\end{proof}

\subsection{General singular integral of Muckenhoupt type}\quad

In 1960, B. Muckenhoupt \cite{Muc60} considered a modification of singular integral and generalized Calder\'on
and Zygmund's work \cite{CZ52} and \cite{CZ56} on the fractional integration in the following. Suppose that $\Om$ satisfies \eqref{e:2Home}$\sim$\eqref{int condi}. Then the following singular integral operator is well defined for $f\in C_c^\infty(\R^d)$ and $r\in\R\setminus\{0\}$,
\Be\label{e:9cs}
T_{\Om,ir}f(x)=\pv\int_{\R^d}\fr{\Om(x-y)}{|x-y|^{d+ir}}f(y)dy,
\Ee
where $i=\sqrt{-1}$.
\vskip0.2cm
\noindent
\textbf{Theorem H}\ {\rm (\cite[Theorem 8]{Muc60}).}\ {\it With the above definition of the general singular integral operator $T_{\Om,ir}$, $T_{\Om,ir}$ is bounded on $L^p(\R^d)$ with bound $C_r\|\Om\|_{1}$ for $1<p<\infty$. Here we should point out $\Om$ satisfies additional cancelation condition \eqref{cance condi} so that $T_{\Om,ir}f$ is well defined for $f\in C_c^\infty(\R^d)$.}
\vskip0.2cm

As a final application of Theorem \ref{t:9main}, we can establish the weak type (1,1) boundedness of $T_{\Om,ir}$.
\begin{theorem}\label{Muck}
Suppose $\Om$ satisfies \eqref{e:2Home}, \eqref{cance condi} and $\Om\in L\log^+L(\S^{d-1})$. Then for any $\lam>0$,
$$m(\{x\in\R^d:|T_{\Om,ir}f(x)|>\lam\})\lc{\lam}^{-1}\C_\Om\|f\|_1.$$
\end{theorem}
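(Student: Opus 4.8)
The plan is to deduce Theorem \ref{Muck} directly from Theorem \ref{t:9main}, in exactly the same spirit as the proofs of Theorems \ref{t:9main_1}, \ref{t:main_1} and \ref{t:main_3}. Written in the form \eqref{e:8T}, the operator $T_{\Om,ir}$ has kernel
$$K(x,y)=\fr{1}{|x-y|^{d+ir}}=|x-y|^{-d-ir},$$
so everything reduces to checking that this $K$ satisfies the size bound \eqref{e:8kb} and the regularity bounds \eqref{e:8kr}, and to supplying the $L^2$ boundedness demanded by Theorem \ref{t:9main}.

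The size bound is immediate: since $|x-y|^{-ir}=e^{-ir\log|x-y|}$ has modulus one, $|K(x,y)|=|x-y|^{-d}$, so \eqref{e:8kb} holds with constant $1$. For the regularity, I would set $g(t)=t^{-d-ir}$ for $t>0$; then $g'(t)=-(d+ir)t^{-d-1-ir}$ and hence $|g'(t)|=\sqrt{d^2+r^2}\,t^{-d-1}$. Applying the fundamental theorem of calculus to the complex-valued function $t\mapsto g(t)$, and using that $|x_1-y|>2|x_1-x_2|$ forces $\big|\,|x_1-y|-|x_2-y|\,\big|\le|x_1-x_2|$ and $|x_2-y|\approx|x_1-y|$ (so every $t$ between $|x_1-y|$ and $|x_2-y|$ satisfies $t\approx|x_1-y|$), one obtains
$$|K(x_1,y)-K(x_2,y)|=\big|\,g(|x_1-y|)-g(|x_2-y|)\,\big|\lc\sqrt{d^2+r^2}\,\fr{|x_1-x_2|}{|x_1-y|^{d+1}},$$
which is the first inequality in \eqref{e:8kr} with $\del=1$ and a constant $C_r$ depending only on $d$ and $r$. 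Since $K(x,y)=|x-y|^{-d-ir}$ is symmetric in its two arguments, the second inequality in \eqref{e:8kr} follows in exactly the same way.

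It remains to bring in the $L^2$ information. By Theorem H, the cancellation condition \eqref{cance condi}, which is among the hypotheses of Theorem \ref{Muck}, makes $T_{\Om,ir}f$ well defined for $f\in C_c^\infty(\R^d)$, and $T_{\Om,ir}$ extends to a bounded operator on $L^2(\R^d)$ with bound $C_r\|\Om\|_1$; since $\log(2+t)\ge\log 2$ gives $\|\Om\|_1\le(\log 2)^{-1}\|\Om\|_{L\log^+L}$, this bound is $\le C_r'\|\Om\|_{L\log^+L}$, which is precisely the $L^2$ hypothesis of Theorem \ref{t:9main}. Together with $\Om\in L\log^+L(\S^{d-1})$ and \eqref{e:2Home}, all the hypotheses of Theorem \ref{t:9main} are then in force, and the claimed bound $m(\{x\in\R^d:|T_{\Om,ir}f(x)|>\lam\})\lc\lam^{-1}\C_\Om\|f\|_1$ follows at once. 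There is no genuine obstacle here: the argument is a clean application of the criterion, and the only point needing a moment's care is that differentiating the oscillatory factor $|x-y|^{-ir}$ produces the extra factor $\sqrt{d^2+r^2}$, so the regularity constant — and hence the implied constant in the conclusion — depends on the fixed nonzero parameter $r$, which is harmless.
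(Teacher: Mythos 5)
Your proposal is correct and follows essentially the same route as the paper: both reduce the statement to checking \eqref{e:8kb} and \eqref{e:8kr} for $K(x,y)=|x-y|^{-d-ir}$ and then invoke Theorem H for the $L^2$ bound. The only cosmetic difference is that the paper estimates $|K(x_1,y)-K(x_2,y)|$ by splitting off the modulus and phase factors separately, whereas you differentiate the single complex-valued function $g(t)=t^{-d-ir}$ and apply the fundamental theorem of calculus; these are equivalent, and your extra remark that $\|\Om\|_1\le(\log 2)^{-1}\|\Om\|_{L\log^+L}$ is a correct (and slightly more careful) bookkeeping of the constant than what the paper spells out.
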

\begin{proof}
By Theorem \ref{t:9main} and Theorem H, it suffices to verify the kernel
$$K(x,y)=\fr{1}{|x-y|^{d+ir}}$$
satisfying \eqref{e:8kb} and \eqref{e:8kr}. It is easily to see that $|K(x,y)|=\fr{1}{|x-y|^d}$. Suppose $|x_1-y|>2|x_1-x_2|$, then $|x_1-y|\approx|x_2-y|$. By using the mean value formula, we have
\Bes
\begin{split}
&|K(x_1,y)-K(x_2,y)|\\
&\leq\Big|\fr{1}{|x_1-y|^d}-\fr{1}{|x_2-y|^d}\Big| +\fr{1}{|x_2-y|^d}\Big|e^{-ir\ln|x_1-y|}-e^{-ir\ln|x_2-y|}\Big|\\
&\lc\fr{|x_1-x_2|}{|x_1-y|^{d+1}}.
\end{split}
\Ees
So the first inequality in \eqref{e:8kr} is valid. Similarly we can establish the second inequality in \eqref{e:8kr}. Hence we complete the proof.
\end{proof}

\section{Some further problems}\label{s:76}

In the previous section, we give lots of applications of Theorem \ref{t:9main}. However, there are still many operators that do not fall into the scope of our main result's applications. Below we list some open problems related to weak type (1,1) bound (For more we refer the reader to see \cite{See14}, \cite{GSPSS17}).

\subsection{Oscillatory singular integral operator with rough kernel.} Let $P(x,y)$ be a real-valued polynomial on $\R^d\times\R^d$. S. Lu and Y. Zhang \cite{LZ92} showed that the operator defined by
$$Tf(x)=\pv\int_{\R^d}e^{iP(x,y)}\fr{\Om(x-y)}{|x-y|^d}f(y)dy$$
is bounded on $L^p(\R^d)(1<p<+\infty)$ if $\Om$ satisfies \eqref{e:2Home}, \eqref{cance condi} and $\Om\in L^r(\S^{d-1})(1<r\leq+\infty)$. S. Challino and M. Christ \cite{CC87} proved that this operator is of weak type (1,1) if $\Om\in Lip(\S^{d-1})$.
It is interesting to show $T$ is weak (1,1) bounded if $\Om$ is rough.

\subsection{Commutator of Christ-Journ\'e type.} Let $a\in L^\infty(\R^d)$, let $K$ be the Calder\'on-Zygmund convolution kernel. M. Christ and J. L. Journ\'e \cite{CJ87} proved the operator defined by
$$T_{a,k}f(x)=\pv\int_{\R^d}K(x-y)(m_{x,y}a)^kf(y)dy$$
maps $L^p(\R^d)$ to itself for $1<p<+\infty$, where $m_{x,y}a=\int_0^1a(sx+(1-s)y)ds$. A. Seeger \cite{See14} showed that $T_{a,1}$ is of weak type (1,1). It is open whether $T_{a,k}$ is weak (1,1) bounded for $k\geq2$. If replacing the Calder\'on-Zygmund convolution kernel $K(x)$ by $\Om(x)/|x|^d$ with $\Om$ is homogeneous of degree zero, S. Hofmann \cite{Hof95} proved this kind of operator maps $L^p(w)$ to itself for  $w$ an $A_p$ weight and $1<p<\infty$ if $\Om\in L^\infty(\S^{d-1})$. One can also ask a question whether it is weak type (1,1) bounded if $\Om\in L^\infty(\S^{d-1})$.

\subsection{Maximal singular integral operator with rough kernel.} Suppose $K$ satisfies \eqref{e:8kb} and \eqref{e:8kr}. Let $\Om$ satisfy \eqref{e:2Home} and $\Om\in L\log^+L(\S^{d-1})$. Suppose $\Om$ and $K$ satisfy some appropriate cancellation conditions such that the following operator
$$T_*f(x)=\sup_{\eps>0}\Big|\int_{|x-y|>\eps}\Om(x-y)K(x,y)f(y)dy\Big|.$$
is well defined for $f\in C_c^\infty(\R^d)$ and extends to a bounded operator on $L^2(\R^{d})$ with bound $C\|\Om\|_{L\log^+L}$. Then a natural question is that wether $T_*$ is of weak type (1,1). When $K(x,y)=1/|x-y|^d$, Calder\'on and Zygmund \cite{CZ56} showed that $T_*$ is $L^p(\R^d)$ bounded for $1<p<+\infty$ if $\Om\in L\log^+L(\S^{d-1})$. But it is unknown whether $T_*$ is of weak type (1,1) even when $\Om\in L^\infty(\S^{d-1})$. And when $K(x,y)=\fr{A(x)-A(y)}{|x-y|^{d+1}}$, $A$ is a Lipschitz function, A. P. Calder\'on \cite{Cal65} proved that $T_*$ is $L^p(\R^d)$ bounded for $1<p<+\infty$ if $\Om\in L\log^+L(\S^{d-1})$. Also the weak type (1,1) bound is unknown in this case.

\subsection*{Acknowledgements}
Xudong Lai would like to thank Andreas Seeger for some helpful
discussions related to this work. The authors thank the referee for his/her important comments and very valuable suggestions which improve this manuscript greatly.
\vskip1cm

\bibliographystyle{amsplain}

\begin{thebibliography}{10}
\bibitem{BC67}B. Bajsanski and R. Coifman,\textit{ On singular integrals}, Proc. Sympos. Pure Math., \textbf{10}, 1-17, Amer. Math. Soc., Providence, R.I. 1967.

\bibitem {Cal65}A. P. Calder\'on, \textit{Commutators of singular integral operators}, Proc. Nat. Acd. Sci. USA, \textbf{53} (1965), 1092-1099.

\bibitem{Cal77} A. P. Calder\'on, \emph{Cauchy integrals on Lipschitz curves and related operators,} Proc. Nat. Acad. Sci. USA, \textbf{74} (1977), 1324-1327.

\bibitem{Cal78} A. P. Calder\'on, \emph{Commutators, singular integrals on Lipschitz curves and application,} Proc. Inter. Con. Math., Helsinki, 1978, 85-96, Acad. Sci. Fennica, Helsinki, 1980.

\bibitem {CCFJR78} A. P. Calder\'on, C. P. Calder\'on, E. Fabes, M. Jodeit and N. Rivi¨¨re, \textit{Applications of the Cauchy integral on Lipschitz curves}, Bull. Amer. Math. Soc., \textbf{84} (1978), no. 2, 287-290.

\bibitem{CZ52}A. P. Calder\'on and A. Zygmund, \textit{On the existence of certain singular integrals},  Acta Math., \textbf{88}(1952), 85-139.

\bibitem{CZ56} A. P. Calder\'on and A. Zygmund, \textit{On singular integrals}. Amer. J. Math., \textbf{78} (1956), 289-309.

\bibitem {CCal75} C. P. Calder\'on, \textit{On commutators of singular integrals}, Studia Math., \textbf{53} (1975), 139-174.

\bibitem{CC87}S. Chanillo and M. Christ, \textit{Weak (1,1) bounds for oscillatory singular integrals}, Duke Mathematical Journal, \textbf{55} (1987), 141-155.

\bibitem {Chr88}M. Christ, \emph{Weak type $(1, 1)$ bounds for rough operators}, Ann. of Math. (2nd Ser.) \textbf{128} (1988), 19-42.

\bibitem {CJ87}M. Christ and J. L. Journ\'e, \textit{Polynomial growth estimates for multilinear singular integral operators}, Acta Math. \textbf{159} (1987), 51-80.

\bibitem {CR88}M. Christ and J. Rubio de Francia, \textit{Weak type $(1, 1)$ bounds for rough operators II}, Invent. Math., \textbf{93} (1988), 225-237.

\bibitem {CS88} M. Christ and C. Sogge,  \emph{The weak type $L^1$ convergence of eigenfunction expansions
for pseudodifferential operators}, Invent. Math., \textbf{94} (1988), 421-453.

\bibitem {DL15}Y. Ding and X.D. Lai, \textit{Weighted bound for commutators}, J. Geom. Anal.,\textbf{25}(2015),1915-1938.

\bibitem {DL2}Y. Ding and X.D. Lai, \textit{On a singular integral of Christ-Journ\'e type with homogeneous kernel}, Canadian Mathematical Bulletin. 2017. DOI:10.4153/CMB-2017-040-1.

\bibitem {FJR78} E. Fabes,  M. Jodeit and N. Rivi\`{e}re, \textit{
    Potential techniques for boundary value problems on $C^1$-domains},
    Acta Math., \textbf{141} (1978), no. 3-4, 165-186.

\bibitem{Fef70} C. Fefferman, \emph{Inequalities for strongly singular convolution operators,} Acta.Math. \textbf{124}(1970), 9-36.
\bibitem{Fef74} C. Fefferman, \emph{Recent Progress in Classical Fourier Analysis,} Proc. Inter. Con. Math., Vancouver, 	1974, 95-118.

\bibitem {Gra249}L. Grafakos, \textit{Classic Fourier Analysis}, Graduate Texts in Mathematics, Vol. \textbf{249} (Third edition), Springer, New York, 2014.

\bibitem {GH12}L. Grafakos and P. Honz\'{\i}k, \textit{A weak-type estimate for commutators}, Inter. Math. Res. Not., \textbf{20} (2012), 4785-4796.

\bibitem{GSPSS17} L. Grafakos, D.Silva, M. Pramanik, A. Seeger, B. Stovall, \textit{Some Problems in Harmonic Analysis}, arXiv:1701.06637.

\bibitem {Hof89}S. Hofmann, \textit{Weak $(1,1)$ boundedness of singular integrals with nonsmooth kernel}, Proc. Amer. Math. Soc., \textbf{103} (1989), 260-264.

\bibitem {Hof90}S. Hofmann, \textit{Weighted inequalities for commutators of rough singular integrals}, Indiana Univ. Math. J., \textbf{39} (1990), 1275-1304.

\bibitem {Hof95}S. Hofmann, \textit{Boundedness criteria for rough singular ingegrals}, Pro. London. Math. Soc. \textbf{3} (1995), 386-410.

\bibitem {LZ92}S. Lu and Y. Zhang, \textit{Criterion on $L^p$
-boundedness for a class of oscillatory singular
integral with rough kernels}, Rev. Mat. Iberoam., \textbf{8}(1992), 201-219.

\bibitem{Muc60}B. Muckenhoupt, \textit{On certain singular integrals}, Pacific J. Math., \textbf{10} (1960), 239-261.
	
\bibitem {MC97} Y. Meyer and R. Coifman, \emph{Wavelets. Calder\'on-Zygmund and multilinear operators}. Translated from the 1990 and 1991 French originals by David Salinger. Cambridge Studies in Advanced Mathematics, 48. Cambridge University Press, Cambridge, 1997.	

\bibitem {MS13} C. Muscalu and W. Schlag, \emph{Classical and Multilinear Harmonic Analysis}, Vol. II. Cambridge Studies in Advanced Mathematics, 138. Cambridge Univ. Press, 2013.\label{MS} 	

\bibitem {See96}A. Seeger, \textit{Singular integral operators with rough convolution kernels}, J. Amer. Math. Soc., \textbf{9} (1996), 95-105.

\bibitem {See14}A. Seeger, \textit{A weak type bound for a singular integral}, Rev. Mat. Iberoam., \textbf{30} (2014), no. 3, 961-978.

\bibitem{SS97}P. Sj\"ogren and F. Soria, \textit{Rough maximal functions and rough singular integral operators applied to integrable radial functions}. Rev. Mat. Iberoam., \textbf{13}(1997), no. 1, 1-18.

\bibitem {Ste93}E. M. Stein, \textit{Harmonic analysis: real-variable methods, orthogonality and oscillatory integrals}, Princeton Univ. Press, Princeton, NJ, 1993. 	

\bibitem {Tao99}T. Tao, \textit{The weak-type (1,1) of $L\log L$ homogeneous convolution operators}. Indiana U. Math. J. \textbf{48} (1999), 1547-1584.

\end{thebibliography}

\end{document}